\renewcommand{\Re}{\mathop{\rm Re}\nolimits}
\renewcommand{\Im}{\mathop{\rm Im}\nolimits}
\newcommand{\beq}{\begin{equation}}
\newcommand{\ee}{\end{equation}}
\theoremstyle{plain} \newtheorem{theorem}{Theorem}[section]
\newtheorem{lemma}[theorem]{Lemma}
\newtheorem{proposition}[theorem]{Proposition}
 \theoremstyle{definition}
\newtheorem{definition}[theorem]{Definition} \theoremstyle{remark}
\newtheorem{remark}[theorem]{Remark}
\newcommand{\R}{{\mathbb R}} \newcommand{\U}{{\mathcal U}}
\newcommand{\Z}{{\mathbb Z}}
\newcommand{\Ph}{{\mathcal P}}
\newcommand{\resto}{{\mathcal R}}
\def\im{{\rm i}}
\newcommand{\N}{{\mathcal N}}
\newcommand{\C}{\mathbb{C}}
\def\uno{{\kern+.3em {\rm 1} \kern -.22em {\rm l}}}
\def\({\left(}
\def\){\right)}
\def\<{\left\langle}
\def\>{\right\rangle}
\numberwithin{equation}{section}
\begin{document}

\title{On orbital instability of spectrally stable vortices of the NLS in the plane}

\author {Scipio Cuccagna and Masaya Maeda}

\date{\today}
\maketitle
\begin{abstract} We   explain    how  spectrally stable  vortices of the   Nonlinear Schr\"odinger Equation  in the plane  can be  orbitally unstable.
This relates to the  nonlinear Fermi
golden rule, a mechanism which exploits the nonlinear interaction between discrete and continuous
modes  of the NLS.

\end{abstract}

\section{Introduction}
\label{section:introduction}

In this paper we consider the nonlinear Schr\"odinger equation (NLS):
\begin{equation}\label{NLS}
 \im u_{t }=-\Delta u +Vu +\beta  (|u|^2) u , \, u(0,x)=u_0(x), \, (t,x)\in\mathbb{ R}\times
 \mathbb{ R}^d,
\end{equation}
 with $  V$ a  real valued Schwartz function.
We are interested in bound states, which are solutions of \eqref{NLS} of the form
$  u(t,x) =    e ^{ \im \omega t  }  \phi _{\omega } (x)$ with $\omega\in\R$.
When  $\phi_\omega$ is real valued and of fixed sign then we call $e^{\im \omega t}\phi_\omega$   a ground state. In all other cases   we call it an excited state.
In the $d=2$ case  on which we focus in this paper, and  with  $V(x)=V(|x|)$,  an important class of excited states, which we call  vortices, involves    solutions of the form
\begin{equation}\label{vortex}
  u(t,x) =    e ^{ \im  \omega t }  \phi _{\omega } (x) \text{ with  }  \phi _{\omega } (x) =e ^{\im m \arg (x)   } \psi _{\omega } (|x|) \text{ with  }  \psi _{\omega }:\R^2 \to \R ,
\end{equation}
with $ \phi _{\omega } (x)$  smooth and rapidly decreasing to 0 at infinity and   $m\geq 1$.
In the sequel, we will always assume that there is a family of bound states $\phi_\omega$ for $\omega$ in some open interval $\mathcal O\subseteq \R_+=(0,\infty)$ (see (H4) in section \ref{sect:statement}).
We will study the following  classical notion of   stability, \cite{GSS1, W2}.

\begin{definition}[Orbital stability]\label{def:orbst}
A bound state $ e ^{ \im \omega t  } \phi _\omega $ of
\eqref{NLS}  is {\it orbitally stable} if
\begin{equation*}\text{$\forall$ $\epsilon>0$,  $\exists$ $\delta>0$ s.t.    $\|\phi _\omega -u_0\|_{H^1}<\delta$}\Rightarrow
\sup_{t>0}\inf_{s\in\R }\|e^{\im s }\phi _\omega -u(t)\|_{H^1}<\epsilon
\end{equation*}
where $u$  is the solution of \eqref{NLS} with $u(0)=u_0$.

\end{definition}
The orbital stability  of bound states has been
extensively    studied, mainly using    two   tools:
       Lyapunov functions;    linearized operators.

It is well known that \eqref{NLS} conserves   the energy
\begin{equation} \label{eq:energyfunctional}\begin{aligned}&
 E(u):=\frac 1 2 \int_{\R^2} |\nabla u|^2 + V|u|^2 + B(|u|^2)\,dx  \end{aligned}
\end{equation}
where $B(0)=0$ and $B'(s)=\beta(s)$,  and the mass  \begin{equation}\label{eq:charge}Q(u):=2 ^{-1}\int _{\R ^2}
  |  u | ^2dx     \text{ (we will set $q(\omega ) =Q(\phi _\omega ) $  and $q'(\omega ) = \frac{d}{d\omega}q(\omega )$)}.
\end{equation}
Using these conservations laws, if $\phi_\omega$ is a strict local minimizer  up to constant phase $e^{\im \theta}$ of $E$ under the constraint $\|u\|_{L^2}=\|\phi_\omega\|_{L^2}$, then it has been shown that  $e^{\im \omega t}\phi_\omega$ is orbitally stable, see
 \cite{CL,GSS1,GSS2,W1}. We are interested on standing waves not covered by this classical result. We will use the following notion.

\begin{definition}\label{def:nontrap}
A    bound state  $e^{\im \omega t}\phi_\omega$ is \textit{not trapped by the energy}
if for any $\varepsilon>0$   there exists an $u_\varepsilon$ satisfying $\|\phi_\omega-u_\varepsilon\|_{H^1}<\varepsilon$, $\|u_\varepsilon\|_{L^2}=\|\phi_\omega\|_{L^2}$ and $E(u_\varepsilon)<E(\phi_\omega)$.
\end{definition}
The results  \cite{CL,GSS1,GSS2,W1} do  not cover   the case when  $\phi_\omega$ is a local but not strict  minimizer. They also leave  unsolved
the   case when $\phi_\omega$ is not trapped by the energy, which we will discuss here.

In order to study the stability, it is natural to consider the linearized operator $\mathcal L_\omega$ of $\phi_\omega$ (see \eqref{eq:linearizationL} for the explicit form).
Indeed, if $\mathcal L_\omega$ has unstable modes (spectrum with positive real part), then $e^{\im \omega t}\phi_\omega$ is orbitally unstable.
(Even though this   may look trivial, it is not, and it was proved rather recently
 first in 2D by \cite{M1} and later in general
 by \cite{georgiev}).  Classical   papers proving   {orbital} instability
of  solitary waves   by first proving   their  {spectral } instability are \cite{Grillakis,GSS2,jones,jones1}.

In the case of  ground states, except for the  degenerate cases   when $ q'(\omega) =0$, only the above two cases (trapped by the energy or linearly unstable) occur. That is, if $ q'(\omega) <0$ then $\mathcal L_\omega$ has   an unstable mode    while if $ q'(\omega) >0$ it is
trapped by the energy, \cite{GSS1,GSS2}.
For the degenerate  case $q'(\omega)=0$, see \cite{CP03, Maeda12JFA, Ohta11JFA}.

Excited state  are usually   not trapped by the energy and furthermore there are cases when $\mathcal L_\omega$ has no unstable modes.
For example, if $-\Delta+V$ has $\mathbf{n}$ simple negative eigenvalues $\{e_1<e_2<\cdots<e _{\mathbf{n}}(<0)\}$  then, if $2e_j<e_1$ for some $j\geq 2$, the excited states bifurcating form $e_j$ are not trapped by the energy and are spectrally stable. Even if    spectrally stable they are
orbitally unstable,   \cite{CM3}.   Prior to \cite{CM3}
no systematic proof of this orbital  instability was available.  The series
  \cite{GW1,GW2,GP, NPT,SW4,T,TY1,TY2,TY3,TY4}, which stemmed from  \cite{BP2,SW3},
 was able to treat  only case  $2e_2>e_1$ where, generically, excited states are  spectrally unstable, see \cite{CPV,GP,NPT,TY4}.

Vortices  \eqref{vortex}  of \eqref{NLS}  in the    important  pure  power case $\beta  (|u|^2) u=-|u| ^{p-1}u$    have been considered in
    \cite{M2,M3,M4}   which have various instability results, always by   proving first {spectral } instability.
Another important example is given by \cite{pego,Quiroga,Towers}   for vortices for the cubic-quintic nonlinearity  \begin{equation}\label{NLS3-5}
 \im u_{t }=-\Delta u   - (|u|^2 -|u|^4) u , \, u(0,x)=u_0(x), \, (t,x)\in\mathbb{ R}\times
 \mathbb{ R}^2,
\end{equation}
(for a review paper see also \cite{CMM}, for work on spinning solitons in 3D see \cite{mihalache}; see also \cite{Bellazzini}).
\cite{pego,Towers}  show  numerically
   for some values   $|m|\ge 1$ the  existence of  a critical value $ \omega _{cr}$ such that for $\omega < \omega _{cr}$ the vortices   are spectrally unstable and for $\omega \ge  \omega _{cr}$ are spectrally
stable.  In the simulations in \cite{pego}  the spectrally stable  vortices for $\omega >  \omega _{cr}$  appear stable while  in \cite{Towers}   for $m=3$
appear  to  slowly develop  instabilities.     This latter observation appears  consistent  with the more recent numerical observations in \cite{CKP,KPS},
in turn based on the instability theory in \cite{Cu3}.  The theory  in \cite{Cu3}   is centered on the notion of Krein signature, which we introduce in Lemma \ref{lem:basis}  (although the standard definition is in the proof of Lemma \ref{lem:bif}).
In this paper, we generalize \cite{Cu3} by using a simple idea from \cite{CM3}.
We will need the following notion.
\begin{definition}[Conditional asymptotic stability]\label{def:conas}
We say that a bound state $e^{\im \omega t}\phi_\omega$ is {\it conditionally asymptotically stable} if  there
exist constants $\epsilon _0>0$  and $C_0>0$  s.t.\ if  $u\in C^0([0,\infty ), H^1)$ is a solution of \eqref{NLS}
 with $  \sup _{t\ge 0}\inf _{\vartheta \in \R }  \| u (t) -e^{\im \vartheta}\phi _{\omega  }\| _{H^1}< \epsilon <\epsilon _0$   then  there exist  $\omega
_+\in\mathcal{O}$, $\theta\in C^1(\R ;\R)$ and $h _+ \in H^1$
with $\| h_+\| _{H^1}+|\omega _+ -\omega |\le C_0 \epsilon  $
such that
\begin{equation}\label{scattering}
\lim_{t\to   \infty}\|u(t )-e^{\im \theta(t)}\phi_{\omega
_+}-e^{\im t\Delta }h _+\|_{H^1}=0 .
\end{equation}
\end{definition}

Then we have the following  orbital instability result.
\begin{theorem}\label{theorem-1.1}
 Consider a bound state $e^{\im \omega t }\phi _{\omega  }$ and suppose that Hypotheses (H1)--(H5)
 in Section \ref{sect:statement} below are satisfied.    Then
if $e^{\im \omega t }\phi_{\omega  }$ is not trapped by the energy and  is conditionally asymptotically stable it  is also orbitally unstable.
\end{theorem}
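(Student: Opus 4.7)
\emph{Proof strategy.} The plan is to argue by contradiction: suppose that $e^{\im \omega t}\phi_\omega$ is orbitally stable. Given $\varepsilon\in(0,\varepsilon_0)$, orbital stability will provide a modulus $\delta=\delta(\varepsilon)>0$, and the ``not trapped by the energy'' hypothesis (Definition \ref{def:nontrap}) will supply a datum $u_\varepsilon$ with $\|u_\varepsilon-\phi_\omega\|_{H^1}<\delta$, $\|u_\varepsilon\|_{L^2}=\|\phi_\omega\|_{L^2}$ and $E(u_\varepsilon)<E(\phi_\omega)$. The orbit of the resulting solution $u(t)$ stays $\varepsilon$-close to that of $\phi_\omega$, so conditional asymptotic stability produces $\omega_+\in\mathcal O$, $\theta\in C^1(\R;\R)$ and $h_+\in H^1$ with $\|h_+\|_{H^1}+|\omega_+-\omega|\leq C_0\varepsilon$ for which \eqref{scattering} holds.

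The heart of the argument is to pass to the limit $t\to\infty$ in the conservation laws $Q(u(t))\equiv Q(u_\varepsilon)$ and $E(u(t))\equiv E(u_\varepsilon)$. The soliton and dispersive pieces decouple in the limit: in $L^2$ and in the quadratic part of the energy this is because $e^{\im t\Delta}h_+\rightharpoonup 0$ while $\phi_{\omega_+}$ and $V$ are Schwartz; in the nonlinear part it follows from the dispersive decay of $\|e^{\im t\Delta}h_+\|_{L^p}$ for $p>2$, the rapid spatial decay of $\phi_{\omega_+}$, and the $H^1$-smallness of the remainder in \eqref{scattering}. The outcome will be
\begin{align*}
q(\omega)&=q(\omega_+)+\tfrac12\|h_+\|_{L^2}^2,\\
E(u_\varepsilon)&=E(\phi_{\omega_+})+\tfrac12\|\nabla h_+\|_{L^2}^2.
\end{align*}

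To close the contradiction, I would use that $\phi_\omega$ is a critical point of $E+\omega Q$, so that $E'(\phi_\omega)=-\omega Q'(\phi_\omega)$ and hence $\frac{d}{d\omega}E(\phi_\omega)=-\omega q'(\omega)$. A first-order Taylor expansion, combined with the mass identity above in the form $q'(\omega)(\omega_+-\omega)=-\tfrac12\|h_+\|_{L^2}^2+O(|\omega_+-\omega|^2)$, yields
\[
E(u_\varepsilon)-E(\phi_\omega)=\tfrac{\omega}{2}\|h_+\|_{L^2}^2+\tfrac12\|\nabla h_+\|_{L^2}^2+O(|\omega_+-\omega|^2).
\]
Since $\omega\in\mathcal O\subseteq\R_+$, the first two terms form a strictly positive quadratic form in $\|h_+\|_{H^1}$; and when $q'(\omega)\neq 0$ the mass identity forces $|\omega_+-\omega|=O(\|h_+\|_{L^2}^2)$, so the remainder is of higher order than the main terms. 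Shrinking $\varepsilon$ one obtains $E(u_\varepsilon)\geq E(\phi_\omega)$, contradicting the choice of $u_\varepsilon$.

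The main obstacle I anticipate is the rigorous decoupling of soliton and free dispersive wave in the nonlinear energy as $t\to\infty$: in two dimensions the free Schr\"odinger decay is only $t^{-1}$, so controlling $\int B(|u|^2)\,dx$ beyond its quadratic part will require standard but nontrivial dispersive and Strichartz estimates tied to the form of $\beta$ assumed in Section \ref{sect:statement}, together with the Schwartz decay of $\phi_{\omega_+}$. A secondary technicality is the inversion $|\omega_+-\omega|\lesssim\|h_+\|_{L^2}^2/|q'(\omega)|$ in regimes where $q'(\omega)$ is small or degenerate; this can be handled by using the mass identity directly rather than dividing by $q'(\omega)$, at the cost of a slightly finer bookkeeping of the Taylor remainder.
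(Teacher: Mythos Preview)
Your argument is correct and shares its overall architecture with the paper's proof: argue by contradiction, use non-trapping to produce $u_\varepsilon$ with $Q(u_\varepsilon)=Q(\phi_\omega)$ and $E(u_\varepsilon)<E(\phi_\omega)$, invoke orbital stability so that conditional asymptotic stability applies, and then pass to the limit in the conserved mass and energy to get $q(\omega)=q(\omega_+)+\tfrac12\|h_+\|_{L^2}^2$ and $E(u_\varepsilon)=E(\phi_{\omega_+})+\tfrac12\|\nabla h_+\|_{L^2}^2$. The decoupling you worry about is exactly what the paper uses (stated there without further comment); your sketch of how to justify it is the standard one and is adequate under (H1)--(H4).

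Where you diverge is in the final contradiction. The paper does not Taylor-expand. It observes that (H5) holds on all of $\mathcal O$, so $q'$ has a fixed sign and hence $\omega\mapsto q(\omega)$ and $\omega\mapsto E(\phi_\omega)$ are strictly monotone with opposite monotonicities (since $\frac{d}{d\omega}E(\phi_\omega)=-\omega q'(\omega)$ and $\omega>0$). Then $q(\omega)\ge q(\omega_+)$ forces $E(\phi_\omega)\le E(\phi_{\omega_+})\le E(u_\varepsilon)$, contradicting $E(u_\varepsilon)<E(\phi_\omega)$. This is a two-line ``sign'' argument using only inequalities. Your route instead combines the two limit identities via a first-order expansion to get $E(u_\varepsilon)-E(\phi_\omega)=\tfrac{\omega}{2}\|h_+\|_{L^2}^2+\tfrac12\|\nabla h_+\|_{L^2}^2+O(|\omega_+-\omega|^2)$, and then uses $q'(\omega)\neq 0$ to convert $|\omega_+-\omega|$ into $O(\|h_+\|_{L^2}^2)$ so that the remainder is negligible. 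This is also correct (the $C^2$ regularity needed is provided by (H4)), and has the mild advantage of being local: it uses $q'\neq 0$ only at the point $\omega$ rather than the global sign of $q'$ on $\mathcal O$. The paper's version, on the other hand, avoids any smallness bookkeeping and never needs to bound the Taylor remainder.
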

\proof Here the key hypothesis is  (H5), i.e.  $q'(\omega )\neq 0$ and hence either $q'(\omega )> 0$  for all $\omega$ or $q'(\omega )< 0$  for all $\omega$.
  We prove the theorem by contradiction.

  \noindent Assume that the statement is false and that   there  is a $\omega _1
  \in \mathcal{O}$ s.t.\ the standing wave  $e^{\im \omega_{1} t }\phi _{\omega_{1} }$  is orbitally stable. Since  $e^{\im \omega_1 t }\phi_{\omega _1 }$ is not trapped by the energy
there are initial data   $u_0$ arbitrarily close to $\phi _{\omega_{1} }$   such that $E(u_0)< E(\phi _{\omega_{1} })$  and $Q(u_0)= Q(\phi _{\omega_{1} })$. We can apply the Conditional asymptotic stability, Definition \ref{def:conas}, and conclude by the conservation of the mass that
\begin{align*}  q(\omega _1)=Q(u (t))  =\lim _{t \to \infty } Q(u (t))= q(\omega _+ ) + 2^{-1} \| h _{+}\|_{L^2}^2 \ge  q(\omega _+).
\end{align*}
Similarly, by the conservation of the energy  we have
\begin{align}\label{enin}
E(\phi _{\omega_{1} })>E (u_0) =\lim _{t \to \infty } E(u (t))=
E(\phi _{\omega_{+} }) +2^{-1}\|\nabla h _{+}\|_{L^2}^2\geq E(\phi _{\omega_{+} }).
\end{align}
By $
 \nabla  {E} (\phi _\omega   )= -\omega  \nabla  Q (\phi _\omega   )   $
 we have $\frac{d}{d\omega} {E} (\phi _\omega   ) =-\omega  q' ( \omega   )$  (recall the notation $q(\omega):=Q (\phi _\omega   )   $ in \eqref{eq:charge}). On the other hand, by  (H5) we know that $q' ( \omega   )$
 has a fixed sign. So, since $\omega >0$,  both $\omega \to {E} (\phi _\omega   )$ and  $\omega \to q(\omega   )$ are strictly monotonic,   one   increasing
  and  the other   decreasing. This means that $q(\omega _1)   \ge  q(\omega _+)$  implies that $   E(\phi _{\omega_{1} })\le E(\phi _{\omega_{+} })$. But this contradicts      $  E(\phi _{\omega_{1} })> E(\phi _{\omega_{+} })$ in \eqref{enin}. This contradiction shows that   $e^{\im \omega_{1} t }\phi _{\omega_{1} }$  is not  orbitally stable  for any $\omega _1
  \in \mathcal{O}$.\qed

\begin{remark}
Notice that (after modifying the assumption (H4)) Theorem \ref{theorem-1.1} holds for arbitrary dimension and we do not require that the nonlinear bound states are vortices.
\end{remark}

\begin{remark}
We do not claim that conditional  asymptotic stability is necessary for the instability of excited states.
\end{remark}

\begin{remark}
For discrete NLS  there are examples of bound states which are not trapped by the energy, are orbitally stable, but are not asymptotically stable, see \cite{maeda, maedamasaki}.
\end{remark}

%\begin{remark} If there is no conditional asymptotic stability (i.e. for the case $V(x)=|x|^2$), we do not know if a non-trapped bound state is orbitally unstable or not. We believe  that it is unstable but we do not know the mechanism which makes the non-trapped bound state unstable. \end{remark}

We now start the discussion on how to apply {Theorem} \ref{theorem-1.1}.
We first discuss a sufficient condition for the standing waves to be not trapped by the energy.
One natural way is to look at the Taylor expansion of the energy, which is often used in the study of orbital stability.
Set $S_\omega(u)=E(u)+\omega Q(u)$ (notice that if $Q(u)=Q(v)$, $E(u)>E(v)$ is equivalent to $S_\omega(u)>S_\omega(v)$).
Then, since $e^{\im \omega t}\phi_\omega$ is a bound state if and only if $\nabla S_\omega (\phi_\omega)=0$, we have $S_\omega(\phi_\omega+v)=S_\omega(\phi_\omega)+\frac 1 2 \<\nabla ^2 S_\omega(\phi_\omega)v,v\>+o(v^2)$.
Roughly speaking, we have one constraint $Q(\phi_\omega+v)=Q(\phi_\omega)$ which may eliminate at most one negative direction of $\nabla^2 S_\omega(\phi_\omega)$.
Therefore, if $\nabla^2 S_\omega (\phi_\omega)$ has more than two negative eigenvalues, $\phi_\omega$ is not trapped by the energy.
Further, for the case $\nabla^2 S_\omega (\phi_\omega)$ has one negative eigenvalue, the trapping/nontrapping can be determined by the sign of $q'(\omega)$.
That is, if $q'(\omega)>(<)0$ then $e^{\im \omega t}\phi_\omega$ is trapped (not trapped) by the energy.

Another viewpoint  which we adopt in this paper is to study the linearized operator $\mathcal L_\omega =J\nabla^2 S_\omega (\phi_\omega)$ given in \eqref{eq:linearizationL}, where $J=\begin{pmatrix}0 & 1\\ -1 & 0 \end{pmatrix}$.
Here, we have extended $\nabla^2 S_\omega (\phi_\omega)$ to be a matrix to make it self-adjont, see section \ref{sect:statement}.
Since $\mathcal L_\omega$ has the form $J(-\Delta+\omega)+$``rapidly decaying potential", we see that $\im \sigma_{\mathrm{e}}(\mathcal L_\omega)=(-\infty,-\omega]\cup[\omega,\infty)$.
Here $\sigma_{\mathrm{e}}(\mathcal L_\omega)$ is the   essential spectrum of $\mathcal L_\omega$.
Since we are only interested in the case that $\mathcal L_\omega$ is spectrally stable we assume $\sigma(\mathcal L_\omega)\subset \im \R$.

\noindent In general  no eigenvalues are expected in the interior of the essential spectrum.
References \cite{AS11JMP, MS11N} have computational proofs about the absence of
embedded eigenvalues when $\phi _\omega$ is a ground state for some equations.\cite{pego}  does not discuss explicitly
the issue of embedded eigenvalues   but they seem to be excluded, see  {Remark}
\ref{rem:pego8-12}  below for some further comments.
\cite{CPV} proves   that  in some \textit{generic} sense  embedded eigenvalues
 do not exist because they are unstable, see also \cite{Grillakis,TY4}. The proof     is similar
 to the   discussion of the well established instability of embedded eigenvalues in the case of self-adjoint operators, cfr. \cite{howland}. However it is not clear whether    taking  for example $V$ generic in the cubic--quintic NLS would make $\mathcal{L}_{\omega}$ generic in the sense of \cite{CPV}.
 We think that embedded eigenvalues (with some negative Krein signature) can exist, but that
 they are unstable (we conjecture the non existence of embedded eigenvalues of positive Krein signature).
 Hence the assumption of  absence of embedded eigenvalues seems reasonable.

 \noindent Similarly,     the edge of the   essential spectrum is   shown in  \cite{CP05JMP,Vougalter10MMNP} generically   to be neither an  eigenvalue  nor resonance.  Again, we did not check
 whether taking $V$ generic makes the $\mathcal{L}_{\omega}$ associated to the cubic--quintic NLS   generic in that sense, although this is even more likely   than for the issue of embedded eigenvalues.  In \cite{pego} there is no discussion about the edge of the   essential spectrum where the presence of a resonance or
 an eigenvalue would affect the computations discussed in pp. 371--372.  We think that
 assuming absence of  eigenvalue   or resonance at the edge is reasonable.

\noindent  We will further assume that the kernel is spanned by $J\phi_\omega$ and all nonzero eigenvalues  (which are assumed to be finitely many but in fact this can be proved to be the case,  see the comments under (H11)  in Sect. \ref{sect:statement})   have the same algebraic and geometric dimensions which are finite (in the presence of a nonzero  imaginary eigenvalue
whose two dimensions differ we would have a sort of linear instability, see Remark \ref{rem:pego8-12} for further comments).
Now, let $\xi$ be the eigenfunction of $\mathcal L_\omega$ associated to $\im\lambda$ with $\lambda>0$
(for eigenfunctions of $\im \lambda$ with $\lambda<0$, we have the symmetry.
Thus it suffices to consider the positive case).
Then, the ``energy" of $\xi$ is $\<\nabla^2 S_\omega(\phi_\omega)\xi,\bar \xi\>=\Omega(\mathcal L_\omega \xi,\bar \xi)=\im \lambda \Omega(\xi,\bar \xi)$, where $\Omega=\<J^{-1},\cdot,\cdot\>$ (see \eqref{eq:Omega}).
It is known that we can normalize $\xi$ s.t.\ $\Omega(\xi,\bar \xi)=\im s$ for $s\in \{1,-1\}$.
Therefore, if $s=1$, $\xi$ have a negative energy and if $s=-1$, $\xi$ have a positive energy.
Now, set $n(\nabla^2 S_\omega(\phi_\omega))$ be the number of negative eigenvalues of $\nabla ^2 S_\omega(\phi_\omega)$.
It is known that $n(\nabla^2 S_\omega(\phi_\omega))$ can be represented as
\begin{align*}
n(\nabla ^2 S_\omega (\phi_\omega))=p(q')+2N_r^-+N_i+2N_c,
\end{align*}
where $p(q')=1$ if $q'>0$ and $0$ if $q'<0$, $N_r,N_i,N_c$ are the number of real, imaginary and complex eigenvalues of $\mathcal L_\omega$ (See \cite{CPV, Vougalter10CMB}).
In our situation, $N_i=N_c=0$.
Therefore, if $N_r^-=0$, $\phi_\omega$ will be trapped by the energy and if $N_r^-\geq 1$, the $\phi_\omega$ is not trapped by the energy.
Thus, by the above discussion, the trapping/nontrapping of the energy is determined by the existence of eigenvalue of $\mathcal L_\omega$ with negative energy.
Here, we will give a direct proof of the nontrapping when there exists a negative energy eigenvalue.

\begin{proposition}\label{prop:h14}
Assume hypothesis (H1)--(H11)     and (H14)   in Section \ref{sect:statement}.
Then the standing waves are not trapped by the energy.
\end{proposition}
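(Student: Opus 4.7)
The plan is to produce, for each small $\varepsilon>0$, a test function $u_\varepsilon$ with $\|\phi_\omega-u_\varepsilon\|_{H^1}<\varepsilon$, $Q(u_\varepsilon)=Q(\phi_\omega)$, and $E(u_\varepsilon)<E(\phi_\omega)$. Since $S_\omega=E+\omega Q$, constancy of $Q$ makes $E(u_\varepsilon)<E(\phi_\omega)$ equivalent to $S_\omega(u_\varepsilon)<S_\omega(\phi_\omega)$, so it suffices to decrease $S_\omega$ while staying on the mass shell. The construction will be a perturbation of $\phi_\omega$ along a negative-energy eigendirection of $\mathcal L_\omega$, corrected to first order by a multiple of $\phi_\omega$ to enforce the mass constraint.

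First, by (H14) together with the spectral assumptions (H1)--(H11), select an eigenvalue $\im\lambda$ of $\mathcal L_\omega$ with $\lambda>0$ and an eigenfunction $\xi$ of negative Krein signature, normalized so that $\Omega(\xi,\bar\xi)=\im$. As recalled just before the proposition, this yields
\[
\langle \nabla^2 S_\omega(\phi_\omega)\,\xi,\bar\xi\rangle \;=\; \im\lambda\,\Omega(\xi,\bar\xi) \;=\; -\lambda \;<\;0.
\]
Passing to a real direction in the doubled space on which $\mathcal L_\omega$ acts, set $w:=\xi+\bar\xi$; this vector lies in the real form of the symplectic space and satisfies $\langle Hw,w\rangle<0$, with $H:=\nabla^2 S_\omega(\phi_\omega)$. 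Moreover, the symplectic identity $\Omega(\mathcal L_\omega a,b)=-\Omega(a,\mathcal L_\omega b)$, applied to $a=\xi$ and $b$ any element of the generalized kernel of $\mathcal L_\omega$ (which by (H11) is spanned by $J\phi_\omega$ together with the generalized null vectors), forces $\Omega(\xi,b)=0$; in particular this gives the $L^2$-orthogonality $\langle w,\phi_\omega\rangle=0$.

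Next, define
\[
u_\varepsilon \;:=\; \phi_\omega + \varepsilon\, w + \tau(\varepsilon)\,\phi_\omega ,
\]
where the scalar $\tau(\varepsilon)$ is produced by the implicit function theorem so that $Q(u_\varepsilon)=Q(\phi_\omega)$. Because $\nabla Q(\phi_\omega)$ is a nonzero multiple of $\phi_\omega$ and $\langle w,\phi_\omega\rangle=0$, one obtains $\tau(\varepsilon)=O(\varepsilon^2)$. Using $\nabla S_\omega(\phi_\omega)=0$ and Taylor expanding,
\[
S_\omega(u_\varepsilon)\;=\;S_\omega(\phi_\omega)+\tfrac{1}{2}\varepsilon^2\langle Hw,w\rangle+O(\varepsilon^3),
\]
so for $\varepsilon>0$ small enough $S_\omega(u_\varepsilon)<S_\omega(\phi_\omega)$, hence $E(u_\varepsilon)<E(\phi_\omega)$, while $\|u_\varepsilon-\phi_\omega\|_{H^1}\to 0$. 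This establishes the nontrapping.

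The point requiring the most care is the symplectic orthogonality between $\xi$ and the generalized null space of $\mathcal L_\omega$, since only this relation ensures that the mass-correcting term $\tau(\varepsilon)\phi_\omega$ is of quadratic order and does not spoil the negative leading coefficient $\langle Hw,w\rangle$ in the Taylor expansion. This is a standard consequence of the skew-pairing $\Omega(\mathcal L_\omega\,\cdot,\cdot)=-\Omega(\,\cdot,\mathcal L_\omega\,\cdot)$ combined with the explicit description of $\ker\mathcal L_\omega$ in (H11), but it must be verified in the specific doubled-variable setting used for the vortex $\phi_\omega$; everything else is a routine implicit-function-theorem and Taylor-expansion argument.
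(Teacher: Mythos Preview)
Your proposal is correct and follows essentially the same route as the paper: both perturb $\phi_\omega$ along the real direction $w=\xi+\bar\xi$ coming from a negative-Krein-signature eigenfunction, correct by a scalar multiple of $\phi_\omega$ to restore the mass (the paper writes this as $(1-\alpha(\varepsilon))\phi_\omega+\varepsilon w$, you as $\phi_\omega+\varepsilon w+\tau(\varepsilon)\phi_\omega$), use the orthogonality $\langle w,\phi_\omega\rangle=0$ to make the correction $O(\varepsilon^2)$, and then Taylor expand $S_\omega$ to find the leading term $\tfrac{1}{2}\varepsilon^2\langle Hw,w\rangle=-\lambda\varepsilon^2<0$. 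One small slip: the description of the generalized kernel you invoke is hypothesis (H7), not (H11).
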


\begin{proof}  Here (H14) is the key hypothesis which is roughly  stating that   for any $\omega$  there is
one eigenvalue $\im \lambda _j \in \R_+$ with negative Krein signature (i.e.\ the corresponding eigenfunction $\xi_j$ satisfies $\Omega(\xi_j,\bar \xi_j)=1$).

Fix $\omega_1\in \mathcal O$ and choose $\alpha(\epsilon)$ to satisfy $Q\((1-\alpha(\epsilon))\phi_{\omega_1}+\epsilon(\xi_j(\omega_1)+\bar \xi_j(\omega_1))\)=Q(\phi_{\omega_1})$.
Since $\xi_j(\omega),\bar \xi_j(\omega)\in N_g(\mathcal L_{\omega_1}^*)^\perp$ (see \eqref{eq:Lstar}), we see that $$Q\((1-\alpha(\epsilon))\phi_{\omega_1}+\epsilon(\xi_j(\omega_1)+\bar \xi_j(\omega_1))\)=(1-\alpha(\epsilon))^2Q\(\phi_{\omega_1}\)+\epsilon^2Q\((\xi_j(\omega_1)+\bar \xi_j(\omega_1))\).$$
Thus, we can conclude $\alpha(\epsilon)\sim \epsilon^2$.
Consequently, we have
\begin{align*}
&E\((1-\alpha(\epsilon))\phi_{\omega_1}+\epsilon(\xi_j(\omega_1)+\bar \xi_j(\omega_1))\)-E(\phi_{\omega_1})\\
&=S_{\omega_1}\((1-\alpha(\epsilon))\phi_{\omega_1}+\epsilon(\xi_j(\omega_1)+\bar \xi_j(\omega_1))\)-S_{\omega_1}(\phi_{\omega_1})\\&=
\frac {\epsilon^2}{2}\<\nabla^2 S_{\omega_1}(\phi_{\omega_1}) \(\xi_j(\omega_1)+\bar \xi_j(\omega_1)\),\xi_j(\omega_1)+\bar \xi_j(\omega_1)\>+o(\varepsilon^2).
\end{align*}
Finally, since $\mathcal L_{\omega_1}=J \nabla^2 S_{\omega_1}(\phi_{\omega_1})$, we have
\begin{align*}
&\<\nabla^2 S_{\omega_1}(\phi_{\omega_1}) \(\xi_j(\omega_1)+\bar \xi_j(\omega_1)\),\xi_j(\omega_1)+\bar \xi_j(\omega_1)\>= \Omega\(\mathcal L_{\omega_1} \(\xi_j(\omega_1)+\bar \xi_j(\omega_1)\),\xi_j(\omega_1)+\bar \xi_j(\omega_1)\)\\&=
\im \lambda_j \Omega(\xi_j(\omega_1),\bar \xi_j(\omega_1))-\im \lambda_j  \Omega(\bar \xi_j(\omega_1),\xi_j(\omega_1))=-2 \lambda_j<0.
\end{align*}
Therefore, we have the conclusion.
\end{proof}

We now turn our attention to the conditional asymptotic stability.
Its proof is very close to the proof of asymptotic stability of ground states in  \cite{Cu0,Cu2} and we will need assumptions for the spectrum of $\mathcal L_\omega$ discussed above (i.e. absence of embedded eigenvalues, $\pm \im \omega$ are neither eigenvalues nor resonance, the generalized kernel is 2 dimensional, all nonzero eigenvalues have nonzero energy, see section \ref{sect:statement}).
We will consider only    the 2D case.
\begin{theorem}\label{th:asstab}
Assume hypothesis (H1)-(H13) in Section \ref{sect:statement}.
Then, we have the conditional asymptotic stability.
\end{theorem}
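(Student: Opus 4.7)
The plan is to adapt the asymptotic stability machinery developed for ground states in \cite{Cu0,Cu2} to the present 2D vortex setting. First I would introduce modulation coordinates: writing $u(t,x) = e^{\im \theta(t)}\(\phi_{\omega(t)}(x) + r(t,x)\)$ and decomposing $r$ with respect to the spectral structure of $\mathcal L_{\omega(t)}$ as
\begin{equation*}
r = \sum_{j} \(z_j \xi_j(\omega) + \bar z_j \overline{\xi_j(\omega)}\) + \eta,
\end{equation*}
where the sum is over the eigenvalues $\im \lambda_j$ in the upper imaginary axis, and $\eta$ lies in the range of the spectral projection onto the continuous spectrum. The modulation parameters $\omega(t), \theta(t)$ are fixed by imposing symplectic orthogonality of $r$ to $N_g(\mathcal L_{\omega}^*)$, so that the ODE for $(\omega,\theta)$ is well posed near the orbit. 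Under (H1)--(H13) all the spectral ingredients required for this decomposition (2-dim generalized kernel, absence of embedded eigenvalues, absence of resonance/eigenvalue at $\pm \im \omega$, eigenvalues with nonzero Krein signature) are present.

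Next I would recast the equation in Hamiltonian form with respect to the symplectic structure $\Omega$, expanding $S_\omega(\phi_\omega + r)$ around $\phi_\omega$. The quadratic part separates into a sum of $\pm \lambda_j |z_j|^2$ terms (sign dictated by the Krein signature, hence the relevance of hypothesis (H14)-type inputs in the nontrapping discussion) plus a continuous-spectrum quadratic form acting on $\eta$. Applying a Birkhoff-type normal form, iteratively eliminating non-resonant monomials $z^{\mathbf m}\bar z^{\mathbf n}$ and $z^{\mathbf m}\bar z^{\mathbf n}\eta$, one reduces the system to a form where the only remaining coupling between discrete modes and $\eta$ lies at frequencies $\sum (m_j - n_j)\lambda_j$ which are resonant, i.e.\ lie in the essential spectrum $(-\infty,-\omega]\cup[\omega,\infty)$. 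This is where the Fermi Golden Rule hypothesis (presumably encoded in an (H) in Section \ref{sect:statement}) enters: the effective dissipation coefficient must be strictly positive.

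The core analytic step would then be to derive, for the transformed coordinates, an effective system of the form
\begin{equation*}
\dot z_j = -\im \lambda_j z_j + \text{(coupling with } \eta\text{)}, \qquad \im \dot \eta = \mathcal L_\omega \eta + \mathcal P_c (\text{source}),
\end{equation*}
and combine Strichartz, smoothing, and endpoint estimates for $e^{\im t \mathcal L_\omega}\mathcal P_c$ with a continuity (bootstrap) argument in norms of type $L^2_t L^\infty_x$ and weighted $L^2_{t,x}$ for $\eta$, plus integrability for the polynomial combination of $z_j$'s produced by FGR. The outcome is a differential inequality $\frac{d}{dt}|z|^{2} \le -\Gamma |z|^{2N} + \text{errors}$ which, assuming the orbital closeness hypothesis in Definition \ref{def:conas}, forces $|z(t)| \to 0$ and $\int_0^\infty \|\text{source}\|^2 < \infty$. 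From there standard scattering arguments identify $h_+$ and $\omega_+$ such that \eqref{scattering} holds.

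The main obstacle is that we work in $d=2$, where the dispersive decay is only $t^{-1}$ and the small-data scattering estimates for $e^{\im t \Delta}$ are at the borderline of what is usable. This is compounded by the vortex angular momentum: the eigenfunctions $\xi_j$ need not be radial, so one cannot reduce to a scalar angular-momentum sector and must use full 2D Strichartz/smoothing for the matrix operator $\mathcal L_\omega$. Establishing these estimates, and in particular verifying that the hypotheses (H1)--(H13) suffice for Kato smoothing of $\mathcal L_\omega$ up to the thresholds (requiring the absence of resonance/eigenvalue at $\pm \im \omega$), is the most delicate part; the rest of the proof then runs in parallel to \cite{Cu0,Cu2}.
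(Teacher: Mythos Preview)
Your outline follows the same broad architecture as the paper --- modulation, spectral decomposition, Birkhoff normal forms, FGR, 2D dispersive estimates from \cite{CT} --- but two points deserve correction.

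First, and most importantly, the differential inequality you write, $\frac{d}{dt}|z|^{2} \le -\Gamma |z|^{2N} + \text{errors}$, is \emph{not} what the FGR computation produces here.  Because the Krein signatures $s_j\in\{-1,1\}$ may be mixed (the theorem assumes only (H1)--(H13), not that all $s_j=-1$), what you actually obtain is
\[
\partial_t \sum_{j} s_j \lambda_j |\zeta_j|^2 \;=\; \Gamma(\zeta) + \text{errors}, \qquad \Gamma(\zeta)\le -\Gamma \sum_{\alpha\in\mathcal{M}} |\zeta^\alpha|^2,
\]
and the left-hand side is \emph{not} sign-definite.  So this inequality does not by itself force $|z(t)|\to 0$; it does not even prevent $|z(t)|$ from growing.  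This is precisely where the word \emph{conditional} enters: you must first use the assumed orbital closeness to get the a priori bound $\sum_j \lambda_j|z_j(t)|^2\le c'\epsilon^2$ for all $t$, feed that back into the integrated inequality to bound $\sum_\alpha\|z^\alpha\|_{L^2_t}^2$, and only then close the bootstrap and deduce $z(t)\to 0$ (using that $\dot z$ stays bounded).  The paper makes this step explicit in \eqref{eq:FGRfgr}--\eqref{eq:FGRfg1r} and the remark following it.  Your proposal elides this, and the inequality as stated would only be valid in the ground-state setting of \cite{Cu2}.

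Second, you omit the Darboux step.  The naive modulation coordinates $(\vartheta,Q,r)$ are \emph{not} Darboux for $\Omega$ (see \eqref{eq:Oexpand}), so before you can run canonical Birkhoff normal forms you must first straighten the symplectic form to $\Omega_0=-d\vartheta\wedge dQ+\Omega(dr,dr)$.  The paper does this in Proposition~\ref{prop:darboux}; without it the normal-form transformations you describe are not canonical and the cancellation mechanism breaks down.  A smaller point: the paper freezes the spectral decomposition at the fixed reference $\omega_1$ (not at the moving $\omega(t)$ as you write), which avoids extra $\dot\omega$-terms when passing to $(z,f)$ coordinates.
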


The main example for the theory developed in this paper that we have in mind   is the cubic quintic NLS  \eqref{NLS3-5}
 and specifically   perturbations obtained adding a radial potential $V(|x|)u$.
  As we mentioned above,  in \cite{pego} it is shown that for any $m=1,2,3,4,5$
  there is a critical value   $ \omega _{cr}$ such that for $\omega < \omega _{cr}$ the vortices   are spectrally unstable and for $\omega \ge  \omega _{cr}$ are spectrally
stable.   \cite{pego} proves numerically that the linearization   $\mathcal{L}_{\omega}$ has an eigenvalue of negative Krein signature, i.e. our hypothesis (H14) is satisfied.  Indeed in \cite{pego}  it is shown  numerically  that for   \eqref{NLS3-5} the spectral instability for $\omega <  \omega _{cr}$
occurs because a pair of eigenvalues on $\im \R$    coalesce     as $\omega\to  \omega_{cr}  ^+$ and then for $\omega < \omega_{cr}$ form   two eigenvalues which  exit  $\im \R$  in  opposite directions. This can happen only if the eigenvalues  for  $\omega >  \omega _{cr}$ do not have the
same Krein signature.
Thus for $\omega $ close to and larger than $  \omega _{cr}$  at least one imaginary eigenvalue has
negative signature. See below in {Lemma} \ref{lem:bif} for a more precise discussion.
Our results do not apply directly to the 2D  cubic quintic NLS \eqref{NLS3-5} because of its translation invariance. However, as we show in Section \ref{sec:applications}, when we add to \eqref{NLS3-5}
a small linear potential, then   we obtain an equation which satisfies hypothesis (H14) and which is not translation invariant.
When we take the small potential with a nondegenerate minimum at the origin, then for $\omega >\omega  _{cr}$   we   obtain   spectrally stable vortices.
 Our hypotheses  (H1)--(H7) are either obvious or we know they are true
 as a consequence of the numerical experiments  in \cite{pego}. The hypotheses (H9)--(H13),
 while probably generically true,   ought to be checked numerically. We will say more later
 about them, especially  (H13), the most delicate and least analyzed.
The conclusion that
 we can draw is that, assuming that indeed  (H9)--(H13) are   true, then
for   $\omega$ close to and larger than $\omega  _{cr}$ the vortices of appropriate perturbations of the cubic quintic NLS \eqref{NLS3-5} are  not trapped by the energy and are conditionally asymptotically stable. And hence  by {Theorem} \ref{theorem-1.1} they are orbitally unstable.

We have seen that Theorem   \ref{theorem-1.1}  is a simple consequence
of non trapping by energy (by Proposition  \ref{prop:h14} a consequence of
the existence of one eigenvalue of negative Krein signature)
and of
the Conditional asymptotic stability. The latter follows from   (H1)--(H13)  by Theorem  \ref{th:asstab}.  A more precise formulation
of Theorem  \ref{th:asstab} is in {Proposition} \ref{prop:asstab}.  The Conditional asymptotic stability,  like the asymptotic stability of ground states proved in \cite{Cu2},
is due to a mechanism of loss of energy of  discrete modes  related to the Fermi golden rule (FGR) and to linear scattering of
  the continuous modes.

   The    FGR  was first introduced in   \cite{BP2,sigal,SW3} and we will discuss it shortly.
But first we discuss   linear scattering, which  is based on a number of results on the group
$e^{t\mathcal{L}_{\omega}}$ associated to the linearization of the NLS at the vortex.
The results needed here are quoted (here we focus on 2D) from \cite{CT} and require that $\mathcal{L}_{\omega}$  should  not have eigenvalues (and resonances)
  in the essential spectrum, see hypotheses (H9)--(H10) below.
  The results in  \cite{CT}
    allow to say that, for all practical purposes, the restriction of  $e^{t\mathcal{L}_{\omega}}$  on the continuous spectrum part, behaves like $e^{\im t(-\Delta +V)}$ restricted to its continuous spectrum part.
  (H9)--(H10) are
  probably generically true, but nonetheless ought  to be  either proved or checked
  numerically on any given example. The fact that we cannot treat translation  (the asymptotic stability  result on moving solitons in \cite{Cu4} has not been proved in dimensions
  1 and 2)   depends on the specific way in which scattering of continuous modes is   proved. Probably  there is a  simpler and more robust way to prove dispersion  using
  virial inequalities and the theory of Martel and Merle \cite{MaM0,MaM1}. This should require  fewer hypotheses and lead to similar results.  This approach
    has been very successful   in the context of KdV equations where it
  has improved the result by Pego and Weinstein \cite{PW}.   The  theory is proving successful also in different contexts, see \cite{BGS,cote,GS,Ma,KMM}.
  However here we follow our standard approach and we use material
  in   \cite{CT}.

We now turn to the FGR.
It can be easily seen under
appropriate coordinate systems.
It leads to   nonlinear interactions
      between discrete an continuous modes of the NLS which are responsible for
    energy leaking out from  the  discrete modes.

 When we discuss the FGR  we need to separate two distinct issues,
     as we will see with a simple example below.
  One  issue is the fact that certain coefficients of the system have a 2nd power structure. This
  has been proved in \cite{Cu2}. See also \cite{Cu0,CM3} for generalizations and references.
 A separate issue,   is whether   or not these
2nd powers, which are non negative,  are also  strictly positive. There might be cases when this is not true, but in general
we expect that they are strictly positive. We do not have the expertise to run numerical
tests, but a simple model might clarify this point (for other examples of FGR see also
 the survey \cite{W3}).

 For $(z,h)\in \C\times H^1(\R^2, \C)$, consider the Hamiltonian
\begin{equation}   \label{model1}    \begin{aligned} &
  \mathcal{H} (z,h )=
	   |z |^2+  \|    \nabla h \| ^{2}_{L^2}  +   |z|^2   \overline{{z}}    \int _{\R^2}
  G   (x) h  (x) dx  +   |z|^2   z
  \int _{\R^2}\overline{G}   (x)\overline{h } (x) dx,
   \end{aligned}
\end{equation}
where $G$ is a $\C$-valued Schwartz function and  the symplectic form $\im dz\wedge d\bar z + 2 \< \im dh, d  h\>$
where \begin{equation} \label{eq:hermitian}\begin{aligned}& \langle f,g\rangle =\Re  \int
_{\mathbb{R}^2}f(x)  \overline{g}(x)  dx =  \int
_{\mathbb{R}^2}(   f_1(x)    {g}_1(x) +   f_2(x)    {g}_2(x) )   dx   \text{ for $f,g:\mathbb{R}^2\to \C(=\mathbb{R}^2)$ }
 .  \end{aligned}\end{equation}
Then  we have  the Hamiltonian system
   \begin{align}  \label{intstr0} &
  \im \dot h = - \Delta   h + |z|^2   z  \overline{G},
  \\&
\im \dot z
=  z   +   2 |z|^2
\int _{\R^2} h (x)
  {G}(x)  dx   +             z^2
\int _{\R^2} \overline{h}(x)
  \overline{{G}}(x)  dx   . \label{intstr11}
\end{align}
The solution of the linearized equation around $(0,0)$ is $(z,h)=(e^{-\im t}z(0),e^{\im t \Delta}h(0))$.
Therefore, at the linear level, we do not see the asymptotic stability of $(0,0)$.
In the following, we sketch  heuristically  why the equilibrium $(0,0)$ is asymptotically stable thanks to
the FGR and scattering of the continuous mode.
First, if we assume $z(t)=e^{-\im t}z(0)$, then $-|z|^2z R^+_{-\Delta}(1)\bar G$ solves \eqref{intstr0} (for the use
  of $R ^{+}_{-\Delta}(1) = \lim  _{\varepsilon \to 0^+} R  _{-\Delta}(1+\im \varepsilon )$
  see the remark  on p. 30 \cite{SW3}) .
Therefore, it is reasonable to set
\begin{equation}\label{intstr-1}
   h = - |z|^2   zR ^{+}_{-\Delta}(1)\overline{G} +g
\end{equation}
     and think $g$ is a remainder.  In fact \eqref{intstr-1} is a normal form transformation
     intended to eliminate the term $|z|^2   z  \overline{G}$ from the r.h.s.
     in  \eqref{intstr0}. $g$ satisfies an analogous equation as $h$, but with
     a higher degree polynomial in $(z,\overline{z})$, and so it is smaller than $h$ (this goes back to \cite{BP2,SW3} )

\noindent When we substitute \eqref{intstr-1} in \eqref{intstr11} and  we  ignore   $g$,  we get
\begin{equation*}    \begin{aligned} &
\im \dot z
=  z   -  2  |z|^4 z
\int _{\R^2}
  {G}  R ^{+}_{-\Delta}(1)\overline{G}  dx  -     |z|^4 z
\int _{\R^2}
  \overline{{G}}   R ^{-}_{-\Delta}(1) {G} dx   .
\end{aligned}  \end{equation*}
We recall that  $R ^{\pm}_{-\Delta}(\lambda ) = P.V. (-\Delta- \lambda) ^{-1} \pm \im \pi \delta (-\Delta- \lambda)$  for any $\lambda >0$, where  $\delta$ is the Dirac delta function and $P.V.x^{-1}$ is the Cauchy principal value.
These can be given sense using the Fourier transform.
Multiplying by $\overline{z}$ and taking imaginary part we get
\begin{equation} \label{intstr12}  \begin{aligned} &
 \frac{d}{dt} |z|^2= - 2\pi \mathfrak{c}  |z|^6 \text{  with }
\mathfrak{c} =\int _{\R^2}
  {G}   \delta (-\Delta  -1) \overline{G}  dx \ge 0   .
\end{aligned}  \end{equation}
We conclude that $\mathfrak{c}\ge 0$ by the following formula, see ch.2 \cite{friedlander}:   \begin{equation} \label{eq:termc}  \begin{aligned} &   \mathfrak{c} =   \frac{1}{2 } \int _{|\xi |= 1} |\widehat{G } (\xi
)|^2 d\sigma (\xi ).
\end{aligned}\end{equation}
 This is the 2nd power structure discussed above.   In the context of the study of ground states of the NLS, the analogue of this formula has been  proved in  \cite{Cu2}, see formulas  \eqref{eq:FGR8}--\eqref{eq:FGR81} and   later  in this section.

 \noindent The next step
is to ask whether   in \eqref{eq:termc} we have not only  $\mathfrak{c}\ge 0$ but rather the strict inequality    $\mathfrak{c} >0$.   Obviously, for generic
$G$ we have $\left . \widehat{G} \right | _{|\xi |= 1} \neq 0$, and so
  $\mathfrak{c} >0$. Numerical computations  for random choices of   $ G $
would    yield convincingly this fact.  The strict inequality  $\mathfrak{c} >0$ and the equation of $z$ in \eqref{intstr12} yield the explicit formula
\begin{equation*}
    |z(t)| ^2= \frac{|z(0)| ^2}{(1+4 \pi \mathfrak{c} |z(0)| ^4 t) ^{\frac{1}{2}}} .
\end{equation*}
On the other hand, $ h $  will scatter by linear mechanisms, thanks also to the fact that
the forcing term   $ |z|^2   z  \overline{G}$   in  \eqref{intstr0}  is in $L^2$ for time.
(Combining this fact with Kato smoothing estimates such as Lemmas \ref{lem:lem3.3} and \ref{lem:lem3.4}, we can show $h$ has finite Strichartz norm, which implies the scattering).

\noindent Proceeding differently and as a reference for later comments, one could  integrate \eqref{intstr12}  and  write
 \begin{equation} \label{intstr120}  \begin{aligned} &
   |z(t)|^2+  2\pi \mathfrak{c} \int _0^t |z(s)|^6  ds = |z(0)|^2  .
\end{aligned}  \end{equation}
Our FGR hypothesis, stated explicitly in
\eqref{eq:FGR},   is the same   as assuming  $\mathfrak{c} >0$ in  model \eqref{model1}.
The  analogue  of $\mathfrak{c} \ge 0$    instead is rigorously proved in \eqref{eq:FGR81}.
Numerical computations are likely to prove \eqref{eq:FGR} true for generic equations exactly in the
same way they would show that  $\mathfrak{c} >0$ in the above model.

We now discuss the FGR, still  heuristically, for a model closer to the one necessary
to examine equation \eqref{NLS}. The discussion is more complicated than for model \eqref{model1},
but will yield similar conclusions.

First of all, by a Noetherian reduction of coordinates
related to the $U(1)$ invariance
   of the NLS, we will see that we  reduce  to an effective  Hamiltonian     of the form, for appropriate finite sums,
\begin{equation}   \label{model2}      \begin{aligned} &
  \mathcal{H} (z,h )= -\sum _{j=1}^{\mathbf{n}}
s_j \lambda _j |z_j|^2   + \langle (-\Delta +\omega +  \mathfrak{V})  h,      \sigma _1 h \rangle  +   \sum  _{
 |\lambda  \cdot (\mu   -\nu )| > \omega
  }   z ^\mu
 \overline{ {z }}^ { {\nu} }    \langle  G_{\mu \nu } ,\sigma _3\sigma _1h \rangle  ,\end{aligned}
\end{equation}
where: $(z,h)\in \C ^{\mathbf{n}}\times H^1(\R^2;\C^2)$ with $\overline{h}=\sigma _1h $;
the inner product $\langle  f  , g   \rangle = \int \  ^t f (x) g(x) dx$  is a bilinear map;
$\im \lambda _j \in \im \R _+$ are eigenvalues of the linearization  $ \mathcal{L}_{\omega}$;
$ \lambda = (\lambda _1, ..., \lambda _{\mathbf{n}})$; $ \mathfrak{V} (x)$ is  smooth in $x$,
 rapidly convergent to 0 as $x\to \infty$ and  is for every $x$ a  self adjoint $2\times 2$ matrix;
$\omega>0$ and in the application we are thinking it is in $\mathcal O$ of (H4);
$\mu,\nu$ are multi-index such as $\mu=(\mu_1,\cdots,\mu_{\mathbf{n}})$ and $z^\mu=z_1^{\mu_1}\cdots z_{\mathbf{n}}^{\mu_{\mathbf{n}}}$ and similar for $\bar z^\nu$;
the second (finite) sum is taken for multi-indices $\mu,\nu$;
   $ G _{\mu \nu}(x)$ is Schwartz class in $x$ with values in 2 components vectors;
  \begin{equation}\label{sigma3}
\sigma _1=\begin{pmatrix}  0 &
1  \\
1 & 0
 \end{pmatrix} \, , \
\sigma _3=\begin{pmatrix}  1 &
0  \\
0 & -1
 \end{pmatrix}  ;
\end{equation}  the number
$-s_j$ is the Krein signature of each $\im \lambda _j$, with at least one $s_j=1$
 (for the connection between
  Krein signature and number $s_j$ see in the proof of Lemma \ref{lem:bif}).

\noindent Here   $\overline{h} =\sigma _1 {h} $, the Hamiltonian  $\mathcal{H} (z,h )$  is real valued and so  $\overline{G}_{\mu \nu }  =-\sigma _1 G_{\nu \mu }$.
We consider the symplectic form     \begin{equation*}   \sum _{j=1}^{\mathbf{n}} \im \ s_j \ dz_j\wedge d\overline{z}_j +\im \langle dh  , \sigma _3
\sigma _1 dh   \rangle .
\end{equation*}
The equations are then of the form
 \begin{equation}\label{ions1}
\begin{aligned} &
  \im  \dot h  =  \mathcal{K}   h  + \sum  _{
 |\lambda  \cdot (\alpha  -\beta )| > \omega
  }   z ^\alpha
 \overline{ {z }}^ { \beta }           G _{\alpha \beta}    \text{ for }  \mathcal{K} =\sigma _3  (-\Delta +\omega +  \mathfrak{V}) \\&   \im s_j \dot z _j
=-s_j\lambda _j z_j +    \sum  _{
 |\lambda  \cdot (\mu   -\nu )| > \omega
  }  \nu _j\frac{z ^\mu
 \overline{ {z }}^ { {\nu} } }{\overline{z}_j} \langle  G_{\mu \nu } ,\sigma _3\sigma _1h \rangle   .
\end{aligned}
\end{equation}
Setting    $h =- \sum   z ^\alpha
 \overline{ {z }}^ { \beta  } R ^{+}_{ \mathcal{K} }(\lambda  \cdot (\beta    -\alpha)) G _{\alpha \beta} +g$  like in   \eqref{intstr-1}, substituting and ignoring $g$ (as we did earlier)  we get
 \begin{equation*}
\begin{aligned} &
   \im s_j \dot z _j
=-s_j\lambda _j z_j -    \sum  _{\substack{
 |\lambda  \cdot (\mu   -\nu )| > \omega
  \\ |\lambda  \cdot (\alpha   -\beta )| > \omega }}  \nu _j\frac{z ^\mu
 \overline{ {z }}^ { {\nu} }  z ^\alpha
 \overline{ {z }}^ { \beta  }   }{\overline{z}_j} \langle  G_{\mu \nu } ,\sigma _3\sigma _1R ^{+}_{ \mathcal{K} }(\lambda  \cdot (\beta    -\alpha)) G _{\alpha \beta}   \rangle   .
\end{aligned}
\end{equation*}
As we will see, we can ignore the terms where $\lambda  \cdot (\mu   -\nu )\neq \lambda  \cdot (\alpha   -\beta )$. Furthermore, up to smaller terms that we ignore, we have
\begin{equation*}
\begin{aligned} &
   \im s_j \dot z _j
=-s_j\lambda _j z_j -\sum _{L>\omega} \ \sum  _{\substack{
  \lambda  \cdot  \nu   = \lambda  \cdot  \alpha =L }}  \nu _j\frac{z ^\alpha
 \overline{ {z }}^ { {\nu} }    }{\overline{z}_j} \langle  G_{0 \nu } ,\sigma _3\sigma _1R ^{+}_{ \mathcal{K} }(-L ) G _{\alpha 0}   \rangle   .
\end{aligned}
\end{equation*}
Let us write formally $R ^{+}_{ \mathcal{K} }(-L) =P.V. \frac{1}{\mathcal{K} + L} +\im \pi \delta (\mathcal{K} + L)$  (there is a distorted Fourier transform that allows to make sense of this).
Then, using $G_{0 \nu }= -\sigma _1 \overline{G}_{  \nu 0}$,
\begin{equation}\label{eq:heu1}
\begin{aligned}
     \partial _t \sum _{j=1}^{\textbf{n}} s_j \lambda  _j
 | z _j|^2 & =  - \pi \sum _{L>\omega}  L
\Re  \left    \langle
\sum _{\substack{  \lambda  \cdot   \nu   =L } }   \overline{z }^ { \nu  }  G_{0 \nu } , \sigma _3\sigma _1  \delta (\mathcal{K} + L) \sum _{\substack{  \lambda  \cdot   \alpha   =L } } z ^{ \alpha }  G _{\alpha 0}  \right  \rangle  \\& =  - \pi \sum _{L>\omega}  L
         \langle  \overline{G}_L (z),  \sigma _3\delta (\mathcal{K} + L) {G}_L (z)  \rangle \text{ where } {G}_L (z):=  \sum _{\substack{  \lambda  \cdot   \alpha   =L } } z ^{ \alpha }  G _{\alpha 0} .
\end{aligned}
\end{equation}
Furthermore, there exists ${G}_L  ^{(0)}(z)$  s.t.
\begin{equation*}
\begin{aligned}
      \langle  \overline{G}_L (z), \sigma _3\delta (\mathcal{K} + L) {G}_L (z)  \rangle  =       \langle  \overline{G}_L  ^{(0)} (z),  \begin{pmatrix} \overbrace{\delta ( -\Delta +\omega   + L)} ^{0} &
0  \\
0 & -\delta (  \Delta -\omega   + L)
 \end{pmatrix}      {G}_L  ^{(0)}  (z)  \rangle.
\end{aligned}
\end{equation*}
Observe now that $\delta (  \Delta -\omega   + L) =  \delta (  -\Delta     -( L-\omega ) )$
and that integrating  in \eqref{eq:heu1} we get
\begin{equation}\label{eq:heu2}
\begin{aligned} &
       \sum _{j=1}^{\textbf{n}} s_j \lambda  _j
 | z _j(t)|^2   - \pi \sum _{L>\omega}  L
      \int _0^t\langle  \overline{( {G}_L  ^{(0)} (z(s)))}_2 ,  \delta (   -\Delta     -( L-\omega ))
       {( {G}_L  ^{(0)} (z(s)))}_2)  \rangle ds \\&=\sum _{j=1}^{\textbf{n}} s_j \lambda  _j
 | z _j(0)|^2.
\end{aligned}
\end{equation}
Notice that inside the integral  we have a similar  2nd power  structure
(for each $L$ we have a positive quadratic form in the vector $( z ^{\alpha} ) _{\lambda \cdot \alpha =L}$ )
to the \eqref{eq:termc} we found in    model \eqref{model1}. In particular, if  $\mathbf{n}=1$ in
\eqref{model2}, then the time integral in \eqref{eq:heu2} will be similar (possibly with different power) to the time integral in \eqref{intstr120}. So formula \eqref{eq:heu2} is a generalization of \eqref{intstr120}.
When
  the Krein signatures are all positive, that is if $s_j\equiv -1$, then
 \eqref{eq:heu2}  can be used to prove    $z(t) \stackrel{t\to \infty}{\rightarrow} 0$  (and so the asymptotic stability of the standing wave). Indeed, starting from a
 $|z(0)|$   small,  \eqref{eq:heu2} is telling us that also
  $|z(t)|$  remains small. Furthermore, the fact that the integrals remain bounded
  as $t\to \infty$  can be used, along with the fact that $|\dot z(t)|$ remains bounded (which can be seen by the 2nd equation in \eqref{ions1}),  to show that  $z(t) \stackrel{t\to \infty}{\rightarrow} 0$.
  However, to make this rigorous we need to have something  analogous to the inequality $\mathfrak{c}>0$, which is exactly the meaning of \eqref{eq:FGR}.

If  there are negative Krein signatures, and so some $s_j=1$, obviously the
proof of  $z(t) \stackrel{t\to \infty}{\rightarrow} 0$ breaks down since
in the l.h.s. of \eqref{eq:heu2}  there are terms with different signs whose
size could become large even if the sum is small.

However, if    we know that a solution $u(t)$ remains close to
  a standing wave, and consequently the corresponding $|z(t)|$ remains  small, then
  \eqref{eq:heu2} allows to prove   $z(t) \stackrel{t\to \infty}{\rightarrow} 0$ because we know that the sum  $\sum _{j }  s_j \lambda  _j
 | z _j(t)|^2$ remains small. This allows to control the integrals and, with
 hypothesis (H13), that is  \eqref{eq:FGR}, to conclude
  $z(t) \stackrel{t\to \infty}{\rightarrow} 0$.

The validity of  \eqref{eq:FGR}  (or of more explicit formulations
of the formula) is an open question.
Proofs  related to special situations are in
\cite{bambusicuccagna,Komech,adami}.
It is fair to expect that numerical
experiments will confirm strict  positivity for most examples, like for
the $\mathfrak{c}$  in \eqref{eq:termc}. Some numerical verifications are in \cite{CKP,KPS},
but there is room for more systematic studies. These are absent in the literature
not because some intrinsic difficulty, but simply because the ideas
in \cite{Cu2,Cu3} are not well known.

Having given a general overview of the main concepts and results of this paper,
we list now the content of the remaining section of the paper. In Section \ref{sect:statement}
we state hypotheses  and give in Proposition \ref{prop:asstab} a statement
that is more detailed than     {Theorem} \ref{th:asstab}.  In Sections \ref{section:set up}--\ref{sec:pfprop}
we describe the proof of  Proposition \ref{prop:asstab}.
The proof
is basically the same of the proofs of asymptotic stability  of ground states of the NLS
in \cite{Cu0,Cu2}.
In Section \ref{section:set up}, after introducing a  natural system
of coordinates related to the \textit{modulation} we state in {Proposition} \ref{prop:darboux}  a result
on Darboux coordinates, whose proof is in \cite{Cu0} and which is a key step to the subsequent search of an effective Hamiltonian.
We will only give a sketch here.
  The expansion in these new coordinates of the functional
$K(u)$, defined in \eqref{eq:eqK}, is again given without proof since the proof is in \cite{Cu0}. In Section \ref{sec:speccoo} we complexify  $  L^2(\R^2, \R ^2)$
and consider the spectral decomposition of the linearization
$\mathcal{L} _{\omega  }$ in  $  L^2(\R ^2, \C ^2)$. This produces discrete
coordinates $z=(z_1,..., z _{\mathbf{n}})\in \C ^{\mathbf{n}}$ and a continuous coordinate  $f\in L^2(\R ^2, \R ^2)$.
We then state the expansion of the functional
$K(u)$ in these coordinates, with the proofs in \cite{Cu0}.
Next step is the search
of an effective Hamiltonian by means of a Birkhoff normal forms argument.
This is accomplished in  in {Proposition} \ref{prop:Bir1}, which we give only a sketch of  the proof,  for which we refer again to
\cite{Cu0}.    {Proposition} \ref{thm:mainbounds}  contains Strichartz and smoothing estimates
satisfied by $f$, and the statement of $z(t) \stackrel{t\to \infty}{\rightarrow} 0$. We then
show that {Proposition} \ref{thm:mainbounds} implies Proposition \ref{prop:asstab}.  By
an elementary continuation argument  {Proposition} \ref{thm:mainbounds}  is in turn a consequence
of  Proposition \ref{prop:mainbounds}, whose proof is in
Section \ref{sec:pfprop}.   In consists first in  Strichartz and smoothing estimates
for $f$. We then split $f$ like in   \eqref{intstr-1}  as a term dependent only of $z$, which is the part
of $f$ which really affects the $z$'s, and a $g$ which is smaller, does not affect $z$ substantially  and can be treated as a reminder term.   The estimates on $f$ and  on $g$ are the same
of \cite{CT}. Finally in   Section \ref{sec:pfprop} we return to the Fermi golden rule,
explaining why $z(t) \stackrel{t\to \infty}{\rightarrow} 0$.  All the  estimates
are proved in the literature, for example in \cite{CM1}.
Therefore, we limit ourselves at describing the structure of the argument.
However we give a sketch of the proof for some important theorem (especially Darboux theorem and Birkhoff normal forms arguments) for reader's convenience.

In Section \ref{sec:applications} we discuss the cubic quintic equation \eqref{NLS3-5}.
We discuss how starting from the numerical observations in \cite{pego}  it satisfies
hypothesis (H14) for values $\omega >\omega _{cr}$
close to $ \omega _{cr}$. Since our theory does not apply to
translation invariant equations like \eqref{NLS3-5} we show that by adding a small
radial potential with a non degenerate  local  minimum  at 0 produces spectrally
stable vortices which still satisfy  (H14) because their linearization is a small
perturbation of that of    \eqref{NLS3-5}.  At the end of   Section \ref{sec:applications}
we also discuss the status of the other of the other hypotheses for equation \eqref{NLS3-5}
perturbed by adding a liner potential.  Some of them  follow from the computations in \cite{pego}, the others  ought to be checked numerically and in out opinion are plausibly true.

%Finally, in Section \ref{sec:fremarks} we make some final comments
%on the instability of vortices.  In particular, it would seem that, at least generically, the only orbitally stable
%vortices are those that satisfy the two conditions $q'(\omega )<0$ and $n(\nabla^2 E(\phi_\omega)+\omega)= 0$,
%where $n(\nabla^2 E(\phi_\omega)+\omega) $ is the number of negative eigenvalues
%of the operator in \eqref{eq:linearizationL}, counted with multiplicity. We don't know if there exists any such vortex.

 \section{Hypotheses and statements} \label{sect:statement}

To begin with,  assume the following hypotheses.

\begin{itemize}
\item[(H1)] $\beta  (0)= 0$, $\beta\in C^\infty(\R,\R)$.
\item[(H2)] There exists a $p\in \R $ such that for every
$k\ge 0$ there is a fixed $C_k$ with
\begin{equation}\label{eq:hyph2}
   \left| \frac{d^k}{dv^k}\beta(v^2)\right|\le C_k
|v|^{p-k-1} \quad\text{if $|v|\ge 1$}.
\end{equation}
\item[(H3)] $V(x)$ is smooth, non zero,  real valued,  and for any multi
index $\alpha $ there are $C_\alpha >0$ and $a_\alpha >0$ such that
 $|\partial ^\alpha _x V(x)|\le
C_\alpha e^{-a_\alpha |x|}$.\end{itemize}

For   $n \ge 1$  and   $K=\R , \C$ then   $\Sigma _n=\Sigma _n(\R ^2, K^2   )$
is the   Banach space   with
 \begin{equation}\label{eq:sigma}
\begin{aligned} &
      \| u \| _{\Sigma _n} ^2:=\sum _{|\alpha |\le n}  (\| x^\alpha  u \| _{L^2(\R ^2   )} ^2  + \| \partial _x^\alpha  u \| _{L^2(\R ^2   )} ^2 )  <\infty .      \end{aligned}
\end{equation} We set $\Sigma _0= L^2(\R ^2, K^2   )$.  We   define $\Sigma _{t}$   by  $
      \| u \| _{\Sigma _t}  :=  \|  ( 1-\Delta +|x|^2)   ^{\frac{t}{2}} u \| _{L^2}   <\infty    $ for $t\in \R$.
 For $t\in \mathbb{N}$ the two definitions are equivalent, see \cite{Cu4}.

\begin{itemize}
\item[(H4)]
There exists an open interval $\mathcal{O}\subset \R_+$ such that
\begin{equation}
  \label{eq:B}
  \Delta u-Vu-\omega u+\beta(|u|^2)u=0\quad\text{for $x\in \R^2$},
\end{equation}
admits a  function  $\omega \to \phi _ {\omega } $ which for any $k$ is in $C^2( \mathcal{O} , \Sigma _k(\R ^2, \C )) $. We also assume that  $ \phi _{\omega } (x) =e ^{\im m \arg (x)   } \psi _{\omega } (|x|)$  like in \eqref{vortex}.
\item [(H5)] We have for $q(\omega ):=  2^{-1}  \| \phi _ {\omega }\|^2_{L^2(\R^2)}$,   see \eqref{eq:charge},
\begin{equation}
  \label{eq:1.2}
  q'(\omega )    \neq 0
 \text{  for all $\omega\in\mathcal{O}$.}\end{equation}

\end{itemize}

\begin{remark}\label{rem:sym}
  Notice that by a standard bootstrapping argument  we can relax hypothesis (H4) by only asking that  $\omega \mapsto \phi _ {\omega } $ be in $C^1( \mathcal{O} , H^1(\R ^2, \C )) $.
\end{remark}
We  identify $\C =\R^2$  setting $w_1=\Re w$  and  $w_2=\Im w$ for $w\in \C$ . In particular  we  identify the imaginary unit $\im $
 with $-J$
  where
\begin{align}\label{eq:J}
J=\begin{pmatrix}  0 & 1  \\ -1 & 0
 \end{pmatrix},
\end{align}
 and the bound state $\phi_\omega$ with $\begin{pmatrix} \Re \phi_\omega \\ \Im \phi_\omega \end{pmatrix}$.

 We consider the   \textit{strong}  symplectic form defined
\begin{equation} \label{eq:Omega} \Omega (X,Y) =  \langle  J^{-1} X , Y \rangle .
\end{equation}

 \begin{definition}\label{def:ext}
 We denote by $\langle  \ ,\ \rangle $  also the bilinear
  form
 in  $L^2(\R ^2, \C ^2)$ obtained extending \eqref{eq:hermitian}.  We extend  $\Omega$  to $L^2(\R ^2, \C ^2)$  as a bilinear    form.
\end{definition}
For $F\in C^1( \mathbf{U},\R)$ with  $\mathbf{U}$ an open subset  of $H^1(\R ^2, \R^2)$,  the gradient $\nabla F (u)$
is defined by
$  \langle \nabla F (u), X\rangle = dF(u)  X $, with $dF(u)$ the  Frech\'et derivative at $u$. If $F\in C^2(\mathbf{U},\R ) $ it remains defined    $\nabla ^2F(u)\in C^0(\mathbf{U},B(H^1(\R ^2, \R^2), H^{-1}(\R ^2, \R^2)))$ (with $B(\mathbb{X},\mathbb{Y})$    the space of $\R $--linear bounded operators from a Banach space $\mathbb{X}$ to another a Banach space $\mathbb{Y}$).

    The $\phi _\omega   $ are
constrained critical points of $ {E} $ with associated Lagrange
multiplier  $-\omega$ so that  $
 \nabla  {E} (\phi _\omega   )= -\omega  \nabla  Q (\phi _\omega   )   $.
  		The \textit{linearization} of the NLS at $\phi_\omega$  is
 \begin{align} &
 \mathcal {L}_\omega:=   J(\nabla ^2
 E(\phi _\omega   )+\omega  )=  J(-\Delta +\omega  +V) +J \mathcal{V}_{\omega}  \text{ with} \label{eq:linearizationL}  \\&   \mathcal{V}_{\omega} :=  \begin{pmatrix}  \beta (|\phi _\omega  |^2  )  + 2 \beta '(|\phi _\omega  |^2  ) (\Re \phi _\omega )^2  & \beta '(|\phi _\omega  |^2  )   (\Re \phi _\omega ) (\Im \phi _\omega )    \\  \beta '(|\phi _\omega  |^2  )   (\Re \phi _\omega ) (\Im \phi _\omega )  &   \beta (|\phi _\omega  |^2  )  + 2 \beta '(|\phi _\omega  |^2  ) (\Im \phi _\omega )^2
 \end{pmatrix}  .  \nonumber
 \end{align}
    Since there is a natural identification $L^2(\R ^2, \C ^2)=  L^2(\R ^2, \R ^2)\otimes _{\R} \C $  and $H^2(\R ^2, \C ^2)=  H^2(\R ^2, \R ^2)\otimes _{\R} \C $, the operator
  $\mathcal {L}_\omega$  extends naturally in a
  $\C$ linear operator in  $L^2(\R ^2, \C ^2)$  with domain $H^2(\R ^2, \C ^2)$ simply starting from formulas $ \mathcal {L}_\omega (v\otimes _\R z)=(\mathcal {L}_\omega  v) \otimes _\R z $.

  Starting from  $ \overline{v \otimes _\R z}  := v \otimes _\R \overline{z} $ a   complex conjugation can be defined in   $    L ^ 2(\R ^2, \R ^2)\otimes _\R \C$, which should not be confused with the complex conjugation in the initial $    L ^ 2(\R ^2, \C )$.
  Using this complex conjugation we consider in $    L ^ 2(\R ^2, \C )$ the
  hermitian
  form  $\langle  f ,\overline{g} \rangle $, where we recall that  $\langle  \ ,\  \rangle $.
is a bilinear form.

 \noindent  Notice that if $\mathcal {L}_\omega \xi  =z   \xi $ with $z \in \C$, then applying this   complex conjugation  we obtain
$\mathcal {L}_\omega \overline{\xi}   =\overline{z}  \overline{\xi } $.

 \noindent  Notice also  that as operators in $L ^ 2(\R ^2, \R ^2)$  we have
$J \mathcal {L}_\omega = -\mathcal {L}_\omega  ^* J $  and
  $  \mathcal {L}_\omega J= -J\mathcal {L}_\omega  ^*   $.
  This extends also for the operators in $L ^ 2(\R ^2, \C ^2)$.
From this we conclude  that $\sigma ( \mathcal {L}_\omega)$  is symmetric
  with respect to the coordinate axes.

We consider only   standing waves which are \textit{spectrally
  stable}.
  Specifically, we will assume
  \begin{itemize}
\item[(H6)]    $\sigma  (\mathcal {L}_\omega  ) \subset  \im \R $  for all $\omega \in \mathcal{O}.$
\end{itemize}
Since (H4) implies that $\phi _\omega (x)$ is exponentially decreasing
to 0 as $x\to \infty$, we know that always for the \textit{essential spectrum}  we have $\im \sigma  _e (\mathcal {L}_\omega  )= (-\infty , -\omega ]\cup [\omega , \infty )$.    Hence (H6) is all about
the set of eigenvalues $\sigma _p(\mathcal {L}_\omega )  $. We want  our $\phi _\omega  $'s  to be
also  \textit{linearly
  stable}. This is somewhat ambiguous as in principle it should  mean  that $\| e^{t\mathcal {L}_\omega}\| _{L^2\to L^2}$ is bounded, which is never true since $\mathcal {L}_\omega$ is not skew--adjoint
  and has a nontrivial Jordan block at 0. By linear stability we mean (H6) and
  the following two additional hypotheses.
 \begin{itemize}
\item[(H7)]    For  $N_g ( L ) :=\cup _{j=1}^\infty \ker (L^j)$  we have \begin{equation} \label{eq:kernel1}\begin{aligned}
&\ker {\mathcal L}_\omega  =\text{Span}\{ J     \phi _\omega     \} \text{ and } N_g ( {\mathcal L}_\omega ) = \text{Span}\{ J \phi _\omega , \partial _{ \omega}    \phi _\omega  \} .
\end{aligned}
\end{equation}

\item[(H8)] For any eigenvalue $\mathbf{e} \in \sigma _p( {\mathcal L}_\omega )  \backslash \{ 0 \}$  the algebraic and geometric dimensions
    coincide.
\end{itemize}
Notice that $N_g ( {\mathcal L}_\omega )$  for \eqref{NLS3-5}  has been computed in \cite{pego}.  For what happens when a potential is added to  \eqref{NLS3-5} breaking
the translation invariance see Lemma \ref{lem:zhou}.

We   assume the following hypotheses.
 \begin{itemize}
\item[(H9)]   There are no eigenvalues  of ${\mathcal L}_\omega$
contained in $\sigma  _e (\mathcal {L}_\omega  )$.

\item[(H10)] The points $\pm \im \omega $ are neither eigenvalues not
resonances of $\mathcal {L}_\omega$, i.e.   if  ${\mathcal L}_\omega F
 =\pm \im \omega F$ in a distributional sense  for an $F\in L^\infty (\R ^2)$, then $F=0$.
\end{itemize}
As we mentioned in the introductions oth conditions appear to be generically true.

\noindent We have the symmetry       $J  \mathcal {L}_\omega =- \mathcal {L}_\omega ^*J $.
 Thus, \eqref{eq:kernel1} implies
\begin{align}\label{eq:Lstar}
N_g (\mathcal{L}_\omega ^{\ast})  =\text{Span}\{   \phi _\omega ,
   J^{-1}\partial _{\omega}    \phi _\omega   \} .
\end{align}
We have the following
 beginning of Jordan blocks decomposition, where we use the hermitian from   $\langle  f ,\overline{g} \rangle $ to define the orthogonal spaces,  \begin{align} 	
\label{eq:begspectdec2}& L^2(\R ^2, \C ^2)= N_g(\mathcal{L}_\omega)\oplus N_g^\perp
(\mathcal{L}_\omega ^{\ast}) \  .
\end{align}
     \cite{CT}, as    consequence of \cite{kato}, proves that  $\| e^{t\mathcal {L}_\omega}  |_{N_g^\perp
(\mathcal{L}_\omega ^{\ast})} \| _{L^2\to L^2}$ is bounded  under (H6)--(H10).
 Appropriate dispersive and Strichatz estimates  can be proved for
 the restriction  $e^{t\mathcal {L}_\omega}\left | _{L^2_c({\omega } )}\right .$ for the space $L^2_c({\omega } )$, see Lemma \ref{lem:Specdec} below.

We assume existence of non zero eigenvalues.
\begin{itemize}
\item [(H11)]  For any $\omega \in \mathcal{O} $
there is  a number   $\mathbf{n}\ge 1$ and positive numbers  $0<\lambda
_1   \le \lambda  _2   \le ...\le \lambda  _\mathbf{n}    $
such that $\sigma _p(\mathcal{L} _{\omega })$ consists exactly of the numbers $\pm \mathbf{e}_j $  and 0, where we set   $  \mathbf{e}_j (\omega ):= \im \lambda  _j (\omega)$. We assume that
there are fixed integers $\mathbf{n}_0=0< \mathbf{n}_1<...<\mathbf{n}_{l_0}=\textbf{n}$
such that
$\lambda  _j   = \lambda _i   $ exactly for $i$ and $j$
both in $(\mathbf{n}_l, \mathbf{n}_{l+1}]$ for some $l\le l_0$. In this case $\dim
\ker (\mathcal{L} _{\omega }  -\im \lambda  _j (\omega)  )=\mathbf{n}_{l+1}-\mathbf{n}_l$.
 We denote by  $N_j\in \mathbb{N}$  the number such that $ N_j+1 =\inf  \{ n\in \N : n \im \lambda  _j  \in \sigma _e(\mathcal{L} _{\omega }) \}  $. We set $\mathbf{N}=\sup _j N_j$.

\end{itemize}
Notice that in (H11) we do not ask any more uniformity with respect to $\omega$, as in \cite{Cu2,Cu4,CM1,CM2}. We remark that the fact that the sum of all the algebraic dimensions
of the eigenvalues of $\mathcal{L} _{\omega }$  is finite can be proved    from  (H10) and   from the fact that   each $\phi _\omega (x)$ is in fact not only a Schwartz function
in $\mathcal{S}(\R ^2, \C)$ but converges exponentially to 0  as $x\to \infty$, see \cite{IW}. The proof is standard, is similar to an argument in p.305 \cite{rauch} and  involves extending the resolvent  beyond the resolvent set as
a meromorphic function. Since we are in dimension 2, the discussion of what happens near $\pm \im \omega$  is more complicated than the 3D argument near 0 in \cite{rauch}, but nonetheless an accumulation of eigenvalues near $\pm \im \omega$ can be excluded using $\pm \im \omega$.

 \noindent The following is a rather standard non--degeneracy hypothesis in the context of normal forms arguments.
\begin{itemize}

\item[(H12)] For distinct  $\lambda  _{j_1}  <...<\lambda  _{j_k}  $
    and    $\mu\in \Z^k$ with
  $|\mu| \leq 2 \mathbf{N}+3$, then  $$
\mu _1\lambda  _{j_1}  +\dots +\mu _k\lambda  _{j_k}  =0 \iff \mu=0\ .
$$
\end{itemize}
It is plausible that (H12) is generically true.

\noindent Next we assume the Fermi golden rule which we will state explicitly later and on which we commented at length in the Introduction.
\begin{itemize}
\item [(H13)]
The Fermi golden rule  Hypothesis (H13)   in Section
\ref{sec:pfprop}, see \eqref{eq:FGR}, holds.
\end{itemize}

So far the hypotheses
(H1)--(H13) are similar to the analogous ones in \cite{Cu2}. In  \cite{Cu2}
though the main result is that the standing waves are (asymptotically) stable,
while here we want to prove instability, that is the opposite. So we need an hypothesis which will generate orbital instability. To obtain this hypothesis we    consider  the signature, or Krein signature,
see \cite{kollar, kollar1}.

 Recall the extensions of $\langle \ , \ \rangle $ and
 $\Omega $  in $L^2(\R ^2, \C ^2)$  made in Def. \ref{def:ext}. Recall that  $\sigma _p  (\mathcal{L} _{\omega } )=\sigma _p  (\mathcal{L} _{\omega } ^*)$.
By general argument we have the following result.
\begin{lemma}
  \label{lem:Specdec} The following spectral decomposition remains determined:
\begin{align}  \label{eq:spectraldecomp} &
L^2(\R ^2, \C ^2) = L^2_d(\mathcal{L} _{\omega } )\oplus L^2_c(\mathcal{L} _{\omega } )\text{  where   $L^2_c({\omega } )=(L^2_d(\mathcal{L} _{\omega } ^* ))^\perp$ and     for $L=\mathcal{L} _{\omega },\mathcal{L} _{\omega }*$ }\\&
L^2_d(L ) :=
N_g  (L  )  \oplus  \widetilde{L}^2_d(L ) \text{  with }   \widetilde{L}^2_d(L ) :=\oplus _{\mathbf{e} \in
\sigma _p(\mathcal{L} _{\omega })\backslash \{ 0\}}   \ker (L  - \mathbf{e}
 )   . \nonumber
\end{align}
\end{lemma}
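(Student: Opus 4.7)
The plan is to build $L^2_d(\mathcal{L}_\omega)$ from Riesz spectral projections and then obtain the complement through duality. Under (H9)--(H11) the set $\sigma_p(\mathcal{L}_\omega)$ is finite, each nonzero eigenvalue is isolated from $\sigma_e(\mathcal{L}_\omega)$, and the zero eigenvalue is isolated with $N_g(\mathcal{L}_\omega)=\mathrm{Span}\{J\phi_\omega,\partial_\omega\phi_\omega\}$ by (H7). For each $\mathbf{e}\in\sigma_p(\mathcal{L}_\omega)$ I would define the Riesz projection
\begin{equation*}
P_{\mathbf{e}}:=\frac{1}{2\pi\im}\oint_{\gamma_{\mathbf{e}}}(\lambda-\mathcal{L}_\omega)^{-1}\,d\lambda,
\end{equation*}
where $\gamma_{\mathbf{e}}$ is a small positively oriented loop in the resolvent set enclosing only $\mathbf{e}$. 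Standard holomorphic functional calculus gives that $P_{\mathbf{e}}$ is a bounded projection onto the algebraic eigenspace; this coincides with $N_g(\mathcal{L}_\omega)$ when $\mathbf{e}=0$ by (H7), and with $\ker(\mathcal{L}_\omega-\mathbf{e})$ when $\mathbf{e}\neq 0$ by (H8). The resolvent identity together with contour deformation across disjoint loops yields $P_{\mathbf{e}}P_{\mathbf{e}'}=\delta_{\mathbf{e}\mathbf{e}'}P_{\mathbf{e}}$, so the finite sum $P_d:=\sum_{\mathbf{e}\in\sigma_p(\mathcal{L}_\omega)}P_{\mathbf{e}}$ is a bounded projection whose range is precisely the algebraic direct sum $L^2_d(\mathcal{L}_\omega)=N_g(\mathcal{L}_\omega)\oplus\bigoplus_{\mathbf{e}\neq 0}\ker(\mathcal{L}_\omega-\mathbf{e})$.

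To identify $\ker P_d$ with $L^2_c(\mathcal{L}_\omega)=(L^2_d(\mathcal{L}_\omega^*))^\perp$, I would take Banach-space adjoints of the contour integrals. Since $(\lambda-\mathcal{L}_\omega)^{-*}=(\overline{\lambda}-\mathcal{L}_\omega^*)^{-1}$, reversing the orientation of $\gamma_{\mathbf{e}}$ shows that $P_{\mathbf{e}}^*$ is the Riesz projection of $\mathcal{L}_\omega^*$ at $\overline{\mathbf{e}}$. Applying the same construction to $\mathcal{L}_\omega^*$ (whose spectrum is the complex conjugate of $\sigma(\mathcal{L}_\omega)$ and which satisfies the mirrored spectral hypotheses), one obtains that $P_d^*$ is the bounded projection onto $L^2_d(\mathcal{L}_\omega^*)$. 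Consequently $\ker P_d=(\mathrm{Range}\,P_d^*)^\perp=L^2_d(\mathcal{L}_\omega^*)^\perp$, and the identity $I=P_d+(I-P_d)$ delivers the claimed topological direct sum.

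The only delicate point, and the reason the authors invoke a ``general argument,'' is the well-definedness of the contour integrals: one must know that every nonzero eigenvalue is isolated with finite algebraic multiplicity and that no eigenvalues accumulate at the edges $\pm\im\omega$ of $\sigma_e(\mathcal{L}_\omega)$. As noted in the paragraph following (H11), this is obtained by writing $\mathcal{L}_\omega$ as a relatively compact (indeed, exponentially decaying) perturbation of $\sigma_3(-\Delta+\omega)$, using the regularity of the edge provided by (H10), and extending the resolvent meromorphically through the essential spectrum \emph{à la} \cite{rauch,IW}. Once these analytic inputs are in place, the remainder of the argument is purely the algebraic bookkeeping sketched above.
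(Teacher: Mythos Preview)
Your argument is correct and is precisely the kind of ``general argument'' the paper alludes to: the paper gives no proof beyond that phrase, so your Riesz-projection construction together with the duality identification $\ker P_d=(\mathrm{Range}\,P_d^*)^\perp$ is exactly the standard way to substantiate the claim. The only point worth making explicit is that $\sigma_p(\mathcal{L}_\omega^*)=\sigma_p(\mathcal{L}_\omega)$ (the paper notes this just before the lemma), so summing the adjoint projections $P_{\mathbf{e}}^*$ over $\mathbf{e}\in\sigma_p(\mathcal{L}_\omega)$ really does recover the full discrete projection for $\mathcal{L}_\omega^*$.
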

We denote by $P_c(\omega)$ the projection on $L^2_c({\omega } ) $ associated to \eqref{eq:spectraldecomp}.

The form $\Omega$  remains symplectic also   in $ \widetilde{L}^2_d(\mathcal{L} _{\omega } )$.
The proof of the following lemma is elementary, see for example  Lemma 5.2 \cite{Cu0}.
  \begin{lemma}
  \label{lem:basis}   For any $\omega  \in \mathcal{O}$ and corresponding $\mathbf{n}$
  in (H11) there are functions $\xi _j(\omega  )\in \Sigma _k  $ for any $k$ and $j=1,...,\mathbf{n}$  such that the following facts hold.
  \begin{itemize}
\item[(1)] $\xi _j(\omega )\in \ker (\mathcal{L}_{\omega} -\im \lambda _j (\omega  )) $ for all $j$.

 \item[(2)]  $\Omega ( \xi _j  {(\omega )} , {\xi} _k  {(\omega )})
 =0$ for all $j$ and $k$ and
  $ \Omega (  \xi _j  {(\omega)} , \overline{{\xi}} _k  {(\omega )} ) =\im  s_j \delta _{jk} $  with $s_j \in \{ 1 ,-1   \} $.
\end{itemize}
\end{lemma}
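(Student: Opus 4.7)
The skeleton of the argument is the Hamiltonian nature of $\mathcal{L}_\omega$, encoded in $J\mathcal{L}_\omega = -\mathcal{L}_\omega^\ast J$, which translates into the identity $\Omega(\mathcal{L}_\omega X, Y) + \Omega(X, \mathcal{L}_\omega Y) = 0$. Applied to two eigenvectors $\mathcal{L}_\omega \zeta_1 = e_1 \zeta_1$, $\mathcal{L}_\omega \zeta_2 = e_2 \zeta_2$, this yields $(e_1 + e_2)\Omega(\zeta_1, \zeta_2) = 0$. My first step would be to run this identity over all pairs chosen from $\ker(\mathcal{L}_\omega - \im \lambda_j)$ and from its conjugate $\ker(\mathcal{L}_\omega + \im \lambda_k) = \overline{\ker(\mathcal{L}_\omega - \im \lambda_k)}$; since all $\lambda_j > 0$ by (H11), one obtains at once that $\Omega(\xi, \eta) = 0$ whenever both $\xi, \eta$ sit in the positive-imaginary-eigenvalue block, and that $\Omega(\xi, \bar\eta) = 0$ unless $\lambda_j = \lambda_k$.

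What remains is to diagonalize, inside each maximal eigenspace $V_\ell := \ker(\mathcal{L}_\omega - \im \lambda_{\mathbf{n}_\ell + 1})$ (of dimension $\mathbf{n}_{\ell+1} - \mathbf{n}_\ell$ by (H11) combined with (H8)), the form $H(\xi, \eta) := \im \Omega(\xi, \bar\eta)$. Using the compatibility $\overline{\langle f, g\rangle} = \langle \bar f, \bar g\rangle$ of the bilinear extension with conjugation, a short calculation yields $\overline{H(\xi, \eta)} = H(\eta, \xi)$, so $H$ is Hermitian. The heart of the proof is the non-degeneracy of $H$ on $V_\ell$: if $\xi \in V_\ell$ satisfies $H(\xi, \cdot) \equiv 0$ on $V_\ell$, equivalently $\Omega(\xi, \zeta) = 0$ for all $\zeta \in \overline{V_\ell}$, then combined with the first paragraph one has $\Omega(\xi, \cdot) = 0$ on the whole sum $\widetilde{L}^2_d(\mathcal{L}_\omega)$. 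To extend to $N_g(\mathcal{L}_\omega) = \mathrm{Span}\{J\phi_\omega, \partial_\omega \phi_\omega\}$ (from (H7)) I would apply the same pairing identity together with the Jordan relations $\mathcal{L}_\omega (J\phi_\omega) = 0$ and $\mathcal{L}_\omega \partial_\omega \phi_\omega = -J\phi_\omega$ (the latter from differentiating $\nabla E(\phi_\omega) = -\omega \phi_\omega$ in $\omega$), which forces $\Omega(\xi, J\phi_\omega) = \Omega(\xi, \partial_\omega \phi_\omega) = 0$. Finally, for the continuous part, the identity $J\ker(\mathcal{L}_\omega - e) = \ker(\mathcal{L}_\omega^\ast + e)$ puts $J\xi$ inside $L^2_d(\mathcal{L}_\omega^\ast)$, hence $\Omega(\xi, \zeta) = -\langle J\xi, \zeta\rangle$ vanishes on $L^2_c(\omega) = L^2_d(\mathcal{L}_\omega^\ast)^\perp$. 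Thus $\Omega(\xi, \cdot) \equiv 0$ on $L^2(\R^2, \C^2)$, and the bilinear non-degeneracy of $\langle\cdot,\cdot\rangle$ forces $\xi = 0$.

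With $H$ now a non-degenerate Hermitian form on each finite-dimensional $V_\ell$, Sylvester's law of inertia supplies an $H$-orthonormal basis $\{\xi_j\}_{j \in (\mathbf{n}_\ell, \mathbf{n}_{\ell+1}]}$ with $H(\xi_j, \xi_j) = -s_j \in \{-1, +1\}$, i.e.\ $\Omega(\xi_j, \bar\xi_j) = \im s_j$; orthogonality between different blocks $V_\ell$ is automatic from the first paragraph. For the regularity $\xi_j \in \Sigma_k$ for every $k$, I would invoke standard spectral theory: by (H9) the eigenvalue $\im \lambda_j$ is isolated in $\sigma(\mathcal{L}_\omega)$ and lies in the spectral gap $(-\omega, \omega)$, while $\mathcal{V}_\omega$ decays exponentially (because $\phi_\omega$ does), so an Agmon-type exponential-decay estimate combined with elliptic regularity gives $\xi_j \in \mathcal{S}(\R^2, \C^2) \subset \bigcap_k \Sigma_k$. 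The most delicate step of the plan is the non-degeneracy argument in the middle paragraph, where the contributions of the generalized kernel, of the remaining non-zero eigenspaces, and of the continuous spectrum must be handled simultaneously in order to trade a hypothetical radical of $H$ against the global non-degeneracy of $\Omega$ on $L^2(\R^2, \C^2)$.
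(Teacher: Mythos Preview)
Your argument is correct and is the standard one; the paper itself gives no proof here, merely labeling the lemma elementary and referring to Lemma~5.2 of \cite{Cu0}. In fact the sentence just before the lemma---that $\Omega$ ``remains symplectic also in $\widetilde L^2_d(\mathcal{L}_\omega)$''---is precisely the non-degeneracy you establish in your middle paragraph, so your write-up actually fills in what the paper only asserts.

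One small bookkeeping remark on the continuous-spectrum step: the orthogonality defining $L^2_c(\omega)=(L^2_d(\mathcal{L}_\omega^*))^\perp$ is taken with respect to the Hermitian pairing $\langle f,\bar g\rangle$, not the bilinear one, so from $J\xi\in L^2_d(\mathcal{L}_\omega^*)$ you do not literally get $\langle J\xi,\zeta\rangle=0$ for $\zeta\in L^2_c$ right away. The fix is immediate: $L^2_d(\mathcal{L}_\omega^*)$ is invariant under the complex conjugation of the paper (the spectrum is symmetric about $\R$), so the bilinear and Hermitian orthogonals coincide and your conclusion $\Omega(\xi,\zeta)=0$ holds. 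This is a detail of convention, not a gap.
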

\qed

In the case of ground states, that is when $\phi _\omega (x)=  \psi _{\omega } (|x|)$
with $\psi _{\omega } (|x|)>0$  and $m=0$, then $s_j\equiv -1$, see Lemma 2.7  \cite{CM2}. Here,
where $m\neq 0$, we assume
instead what follows.
   \begin{itemize}
\item[(H14)]    There exists at least one $j$ s.t. $s_j=1$.
\end{itemize}
We have already discussed, and we will say more in  Section \ref{sec:applications},  that
it has been shown  numerically  that this hypothesis occurs
for spectrally stable vortices of the cubic--quintic NLS \eqref{NLS3-5}.

  {Theorem} \ref{th:asstab}  is a consequence of the following proposition,
     which
  is  a consequence of   \cite{Cu0,Cu2,CM1,CT}.
\begin{proposition}\label{prop:asstab}  Let    $\omega_{1}\in \mathcal{O}$ and assume (H1)--(H13). Then there
exist constants $\epsilon _0>0$  and $C_0$  s.t. if  $u\in C^0([0,\infty ), H^1)$ is a solution of \eqref{NLS}
 with $  \sup _{t\ge 0}\inf _{\vartheta \in \R }  \| u (t) -e^{\im \vartheta}\phi _{\omega_{1} }\| _{H^1}<\epsilon <\epsilon _0$   then  there exist  $\omega
_+\in\mathcal{O}$, $\theta\in C^1(\R ;\R)$ and $h _+ \in H^1$
with $\| h_+\| _{H^1}+|\omega _+ -\omega_1|\le C_0 \epsilon $
such that
\begin{equation}\label{scattering}
\lim_{t\to   \infty}\|u(t )-e^{\im \theta(t)}\phi_{\omega
_+}-e^{\im t\Delta }h _+\|_{H^1}=0 .
\end{equation}
It is possible to write $u(t,x)=e^{\im \theta(t)}\phi_{\omega  (t)}
+ A(t,x)+\widetilde{u}(t,x)$  with $|A(t,x)|\le C_N(t) \langle x
\rangle ^{-N}$ for any $N$, with $\lim _{t\to \infty }C_N(t)=0$,
with $\lim _{ t \to  \infty } \omega  (t)= \omega _+$, and such
that for any admissible  pair $(q,p)$, i.e.
\begin{equation}\label{admissiblepair}  1/q+1/p= 1/2\,
 ,
q> 2,
\end{equation}
we have
\begin{equation}\label{Strichartz} \|  \widetilde{u} \|
_{L^q ( [0,\infty ),W^{1,p} )}\le
 C_0\epsilon .
\end{equation}
 \end{proposition}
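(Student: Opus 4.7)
The plan is to follow the scheme of \cite{Cu0,Cu2,CT} adapted to the vortex setting, with the crucial modification that the smallness of $\|u(t)-e^{\im\vartheta}\phi_{\omega_1}\|_{H^1}$ (furnished as an \emph{a priori} hypothesis, not as a conclusion) will substitute for a Lyapunov functional when controlling the discrete modes with negative Krein signature. First I would introduce modulation coordinates, writing $u(t,x)=e^{\im\theta(t)}(\phi_{\omega(t)}+r(t,x))$ with $r$ constrained to lie in the symplectic complement of $N_g(\mathcal L_{\omega(t)})$ (using (H7) and the implicit function theorem, which is justified by the orbital closeness assumption). The modulation equations yield ODEs for $\dot\omega$ and $\dot\theta$ whose right-hand sides are quadratic in $r$ modulo resonant terms.

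Next I would use the spectral decomposition of Lemma \ref{lem:Specdec} together with the basis from Lemma \ref{lem:basis} to split $r=\sum_{j=1}^{\mathbf n}(z_j\xi_j+\bar z_j\bar\xi_j)+f$ with $f\in L^2_c(\mathcal L_\omega)$, obtaining coordinates $(\omega,\theta,z,f)$. A Darboux-type theorem (Proposition \ref{prop:darboux}) puts the symplectic form in the standard form of \eqref{model2}. The expansion of $K(u)$ in these coordinates, together with a Birkhoff normal forms procedure iterated up to order $2\mathbf N+3$ (using the non-resonance hypothesis (H12)), removes all non-resonant monomials $z^\mu\bar z^\nu$ and transforms the Hamiltonian into the effective model \eqref{model2}, with the resonant coupling terms $z^\mu\bar z^\nu\langle G_{\mu\nu},\sigma_3\sigma_1 h\rangle$ satisfying $|\lambda\cdot(\mu-\nu)|>\omega$, so that $R_{\mathcal K}^+(\lambda\cdot(\beta-\alpha))$ has a nontrivial delta component on the continuous spectrum.

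With this effective Hamiltonian in hand I would prove the Strichartz and Kato-smoothing estimates for $f$ (and for its refinement $g$ after performing the substitution analogous to \eqref{intstr-1}), using the dispersive theory of $e^{t\mathcal L_\omega}P_c(\omega)$ from \cite{CT} (which requires (H9)--(H10)). The main point is a bootstrap argument: one posits bounds on the Strichartz norm of $\widetilde u$, on $\|z\|_{L^\infty_t}$, and on an integral $\int_0^\infty\|G_L(z(s))\|^2\,ds$-type quantity; one then closes these bounds by proving Proposition \ref{prop:mainbounds}. The Fermi golden rule identity analogous to \eqref{eq:heu2} gives, after substituting the decomposition $h=-\sum z^\alpha\bar z^\beta R^+_{\mathcal K}(\lambda\cdot(\beta-\alpha))G_{\alpha\beta}+g$ and using hypothesis (H13), a dissipation estimate of the form
\begin{equation*}
\sum_{j=1}^{\mathbf n} s_j\lambda_j|z_j(t)|^2+c\int_0^t\sum_{L>\omega}\|(G_L^{(0)}(z(s)))_2\|^2_{\delta}\,ds=\sum_j s_j\lambda_j|z_j(0)|^2+\text{(remainder)}.
\end{equation*}

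The main obstacle, and the genuinely new point beyond \cite{Cu2}, is handling the indefinite left-hand side arising from (H14): since some $s_j=+1$, the quantity $\sum s_j\lambda_j|z_j|^2$ is not a coercive Lyapunov functional and in principle individual $|z_j|^2$ could grow while their signed sum stays small. This is precisely where the \emph{conditional} character of the theorem matters: the standing hypothesis $\sup_t\inf_\vartheta\|u(t)-e^{\im\vartheta}\phi_{\omega_1}\|_{H^1}<\epsilon$ gives, through the modulation, an a priori bound $|z(t)|\lesssim\epsilon$, which controls each $|z_j(t)|^2$ independently. The FGR identity is then used not to get smallness (which is given) but to force the integral $\int_0^\infty\|G_L^{(0)}(z(s))\|^2\,ds<\infty$, and combined with the bound on $|\dot z|$ from \eqref{ions1} this yields $z(t)\to 0$. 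Once $z(t)\to 0$, the equation for $f$ becomes asymptotically free, and the endpoint Strichartz/smoothing machinery provides $h_+\in H^1$ with \eqref{scattering}; the modulation parameters $\omega(t),\theta(t)$ are shown to have limits by integrating their bounded, $L^2_t$-integrable equations, giving $\omega_+\in\mathcal O$ and the final decomposition $u=e^{\im\theta}\phi_{\omega(t)}+A+\widetilde u$ with $A$ the rapidly decaying discrete part and $\widetilde u$ satisfying \eqref{Strichartz}.
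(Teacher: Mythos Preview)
Your proposal is correct and follows essentially the same scheme as the paper: modulation, Darboux coordinates, spectral decomposition with respect to $\mathcal L_{\omega_1}$, Birkhoff normal forms to reach the effective Hamiltonian, Strichartz/smoothing estimates for $f$ and $g$ from \cite{CT}, and then the FGR identity \eqref{eq:FGRfgr} combined with the \emph{a priori} orbital bound to get $\|z^\alpha\|_{L^2_t}<\infty$ and hence $z(t)\to 0$. One minor technical point: the paper freezes the spectral decomposition at the fixed $\omega_1$ (not the moving $\omega(t)$), and it extracts $\omega_+$ via mass conservation rather than by integrating the $\dot\omega$ equation, but these are implementation details that do not affect your argument.
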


From Section \ref{section:set up} to Section \ref{sec:pfprop} we focus on  {Proposition} \ref{prop:asstab}. There are various steps. We aim at showing that there exists an effective Hamiltonian of the form \eqref{model2}. This has to be found   through a Birkhoff normal forms argument, see Theorem 11 \cite{hofer}. In order to initiate the process we need to
to find  and appropriate system of Darboux coordinates.

\section{Modulation and Darboux coordinates} \label{section:set up}

Asymptotic (or conditional asymptotic) stability arguments require traditionally, since
 \cite{SW1}, the choice
of appropriate    \textit{modulation} coordinates.
Indeed, since we are discussing the stability of vortices, it is natural to
express a solution $u(t)$ which is close to a vortex as a sum of a vortex plus an error and   to frame stability in terms of what happens to the error. This and more is what modulation aims to do.
The first step to define precisely this vaguely stated aim  is   the following      standard Modulation Lemma.

 \begin{lemma} [Modulation Lemma]
  \label{lem:modulation}
     Fix $\underline{n} \in \Z$, $\omega_1\in \mathcal O$ and $\Psi _1=e^{-J \vartheta _1}\phi _{\omega _1}$, where $\mathcal O$ is given in (H4) and $J$ is given in \eqref{eq:J}. Then there exists a neighborhood $\U _{\underline{n}  } $   of $\Psi _1$    in $ \Sigma _{-\underline{n}}(\R ^2, \R ^{2 }) $
    and   functions $\omega \in C^\infty (\U _{\underline{n}}   , \mathcal{O})$
	and $\vartheta \in C^\infty (\U _{\underline{n}}   , \R   )$ s.t.  $\omega (\Psi _1)=\omega _1 $   and $\vartheta
(\Psi _1) =\vartheta _1$
  and s.t.  $\forall u\in \U _{\underline{n}}   $
\begin{equation}\label{eq:ansatz}
\begin{aligned}
  &
 u =   e^{-J \vartheta } (  \phi _{\omega } +R)
 \text{  and $R\in N^{\perp}_g (\mathcal{L}_\omega ^*)$.}
\end{aligned}
\end{equation}
\end{lemma}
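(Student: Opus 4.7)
The plan is to apply the Implicit Function Theorem in the variables $(\omega,\vartheta)$, using the two orthogonality constraints encoded by the condition $R\in N_g^\perp(\mathcal{L}_\omega^*)$. Recall from \eqref{eq:Lstar} that $N_g(\mathcal{L}_\omega^*)=\mathrm{Span}\{\phi_\omega,\,J^{-1}\partial_\omega\phi_\omega\}$, so by Definition \ref{def:ext} the constraint $R\in N_g^\perp(\mathcal{L}_\omega^*)$ is exactly the pair of scalar equations $\langle R,\phi_\omega\rangle=\langle R,J^{-1}\partial_\omega\phi_\omega\rangle=0$. Since $\phi_\omega$ and $\partial_\omega\phi_\omega$ belong to $\Sigma_k$ for every $k$ by (H4), both pairings are well defined and smooth when $u\in\Sigma_{-\underline{n}}$.

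Concretely, I would define
\begin{equation*}
F(u,\omega,\vartheta):=\begin{pmatrix}\langle e^{J\vartheta}u-\phi_\omega,\ \phi_\omega\rangle\\ \langle e^{J\vartheta}u-\phi_\omega,\ J^{-1}\partial_\omega\phi_\omega\rangle\end{pmatrix},
\end{equation*}
a $C^\infty$ map from an open subset of $\Sigma_{-\underline{n}}\times\mathcal{O}\times\R$ to $\R^2$. At the base point $(\Psi_1,\omega_1,\vartheta_1)$ one has $e^{J\vartheta_1}\Psi_1=\phi_{\omega_1}$, so $F(\Psi_1,\omega_1,\vartheta_1)=0$. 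The key calculation is the Jacobian $D_{(\omega,\vartheta)}F$ at this point. Using $q'(\omega)=\langle\phi_\omega,\partial_\omega\phi_\omega\rangle$, the antisymmetry $J^T=-J$, and the identity $\langle Ja,J^{-1}b\rangle=-\langle a,b\rangle$, I get $\partial_\omega F_1=-q'(\omega_1)$, $\partial_\vartheta F_1=\langle J\phi_{\omega_1},\phi_{\omega_1}\rangle=0$, $\partial_\omega F_2=-\langle\partial_\omega\phi_{\omega_1},J^{-1}\partial_\omega\phi_{\omega_1}\rangle=0$, and $\partial_\vartheta F_2=-\langle\phi_{\omega_1},\partial_\omega\phi_{\omega_1}\rangle=-q'(\omega_1)$. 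Hence
\begin{equation*}
D_{(\omega,\vartheta)}F(\Psi_1,\omega_1,\vartheta_1)=-q'(\omega_1)\,\mathrm{Id}_{\R^2},
\end{equation*}
which is invertible by hypothesis (H5).

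The Implicit Function Theorem then gives a neighborhood $\mathcal{U}_{\underline{n}}$ of $\Psi_1$ in $\Sigma_{-\underline{n}}(\R^2,\R^2)$ and unique smooth maps $\omega\in C^\infty(\mathcal{U}_{\underline{n}},\mathcal{O})$, $\vartheta\in C^\infty(\mathcal{U}_{\underline{n}},\R)$ agreeing with $(\omega_1,\vartheta_1)$ at $\Psi_1$ and satisfying $F(u,\omega(u),\vartheta(u))=0$ for all $u\in\mathcal{U}_{\underline{n}}$. Setting $R:=e^{J\vartheta(u)}u-\phi_{\omega(u)}$ yields the decomposition $u=e^{-J\vartheta}(\phi_\omega+R)$ of \eqref{eq:ansatz}, and the vanishing of $F$ is precisely the condition $R\in N_g^\perp(\mathcal{L}_\omega^*)$.

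The only mildly delicate step is justifying that $F$ is $C^\infty$ on the stated weighted-dual space $\Sigma_{-\underline{n}}$: here one simply notes that $u\mapsto e^{J\vartheta}u$ is linear and bounded on $\Sigma_{-\underline{n}}$ with smooth dependence on $\vartheta$, while the test functions $\phi_\omega,J^{-1}\partial_\omega\phi_\omega$ lie in $\Sigma_{\underline{n}}$ with $C^\infty$ dependence on $\omega$ thanks to (H4). The rest is routine; the algebraic content reduces to the single nondegeneracy fact $q'(\omega_1)\neq 0$.
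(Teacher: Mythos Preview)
Your proof is correct and follows essentially the same route as the paper: define the two scalar constraints $\langle e^{J\vartheta}u-\phi_\omega,\phi_\omega\rangle=0$ and $\langle e^{J\vartheta}u-\phi_\omega,J^{-1}\partial_\omega\phi_\omega\rangle=0$, and apply the Implicit Function Theorem using the nondegeneracy $q'(\omega_1)\neq 0$ from (H5). You have in fact gone further than the paper's sketch by computing the Jacobian explicitly and addressing the $C^\infty$ regularity in $\Sigma_{-\underline{n}}$.
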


We give a sketch of the proof.
See also Lemma 2.4  \cite{Cu0} or Lemma 2.2 \cite{CM1} for a detailed proof.

\proof
It suffices to apply the implicit function Theorem to
\begin{align*}
F_1(\vartheta,\omega,u)=\begin{pmatrix}\<e^{J\vartheta}u-\phi_\omega, \phi_\omega\>\\ \<e^{J\vartheta}u-\phi_\omega, J^{-1}\partial_\omega \phi_\omega\>\end{pmatrix}.
\end{align*}
$\left.\frac{\partial F_1}{\partial(\vartheta,\omega)}\right|_{(\vartheta,\omega,u)=(\vartheta,\omega_1,e^{-J\vartheta}\phi_{\omega_1})}$ will be invertible because of (H5).
 \qed

The above Modulation Lemma is the starting point to find appropriate coordinates in the neighborhood of $\Psi_1$ in $H^1(\R ^2, \C )$.   Solutions $u(t)$ starting close to $\Psi_1$ will admit
a time dependent decomposition \eqref{eq:ansatz}. If $u(t)$ stays close to the orbit of
$\Psi_1$ for all time and scatters to a vortex, this will be equivalent at showing that $R(t)$ scatters to 0 as $t\to \infty$ and $\omega (t) \stackrel{t\to +\infty  }{\rightarrow} \omega _+$ for some $\omega _+\in \mathcal{O}$. This will be proved working on the parameters in
the r.h.s. of \eqref{eq:ansatz}.

\noindent Equation \eqref{NLS}  can be expressed as $\dot u =J \nabla E(u)$.  The following
discussion is standard, and is only sketched  in order to give an overview of the use
of the parameters $(\vartheta , \omega , R)$.
By substituting
the r.h.s. of  \eqref{eq:ansatz} and using $\nabla E(\phi _\omega )=- \omega \phi _\omega$
we obtain after standard computations
\begin{equation}\label{eq:linear0}
    ( \omega -\dot \vartheta   ) J\phi _\omega + \dot   \omega  \partial _{\omega} \phi _\omega +\dot R = \mathcal{L}_{\omega} R +N(R) -  ( \omega -\dot \vartheta   ) JR,
\end{equation}
where $N(R)$ contains terms which are  quadratic or higher order for $R$ small.
  Denote
by  $P_{N_g }(\omega) =P_{N_g(\mathcal{L}_\omega)}$   the projection on
$N_g(\mathcal{L}_\omega) $ related to \eqref{eq:begspectdec2}.
Because $\<\phi_\omega,\partial_\omega\phi_\omega\>=q'(\omega)$, we have
\begin{equation} \label{eq:projNg} \begin{aligned}
 P_{N_g }(\omega)X= -(q'(\omega ))^{-1}\(\Omega(X,\partial_\omega \phi_\omega) J
  \phi _\omega\  +\Omega(J\phi_\omega,X) \partial
_{\omega}\phi _\omega \),  \quad \forall \, X\in \mathcal{S} '(\R ^2, \C^2) .
\end{aligned}
\end{equation}
Applying the projection
\begin{equation} \label{eq:projP} \begin{aligned} & P(\omega):=1-P_{N_g }(\omega). \end{aligned}
\end{equation}
to \eqref{eq:linear0} we obtain for $R$ the following equation
\begin{equation}\label{eq:linear1}
   \dot R = \mathcal{L}_{\omega} R +P(\omega)N(R) -  ( \omega -\dot \vartheta   ) P(\omega)JR.
\end{equation}
 Since, as is well known, the term $( \omega -\dot \vartheta   )$  is higher order in $R$,
 we can think of \eqref{eq:linear1} as a perturbation of $ \dot R = \mathcal{L}_{\omega} R$.
 It is natural now to look at the rest of the spectrum of $\mathcal{L}_{\omega} $. The main difficulty is to show that the discrete components of $R$ associated to the point spectrum
 of $\mathcal{L}_{\omega} $, which at a linear level want to oscillate like the $e ^{-\im t}z(0)$ component
 of the linearization of \eqref{intstr0}--\eqref{intstr11},  will lose their energy
 because of some friction originating from the nonlinear interaction with the continuous components of $R$. This effect will be captured by an argument similar to the Fermi golden rule discussed in the Introduction. For that argument to work we need to find an appropriate system of coordinates.

Lemma \ref{lem:modulation} does not provide coordinates.  We co back to
the projection $P_{N_g }(\omega)$. We have $\Omega(P_{N_g}(\omega)X,Y)=\Omega(X,P_{N_g}(\omega)Y)$.
 By (H4)--(H5) and \eqref{eq:kernel1}  for $\mathcal{S}(\R^2, K^2)= \cap _{k\ge 0}\Sigma _k(\R^2, K^2)$   the space of  Schwartz functions and for  $\mathcal{S}'(\R^2, K^2)= \cup _{k\le 0}\Sigma _k(\R^2, K^2)$ the space of  tempered distributions,
    we have  \begin{equation} \label{eq:projreg} \begin{aligned} & P_{N_g }(\omega)\in C^{\infty}
(\mathcal{O}, B (\mathcal{S}'(\R ^2, K^2),\mathcal{S}(\R ^2, K^2))) \text{  for $K=\R , \C$}. \end{aligned}
\end{equation}
For $P(\omega )$ defined as in \eqref{eq:projP}    we have $  \omega  \to  P(\omega )P( {\omega}_1) \in C^\infty (\mathcal {O} ,B (\Sigma _k ,\Sigma _k))
$ for any $k$.
By (H4)  we have $P(\omega)\stackrel{\omega\to \omega_1}{\rightarrow} P(\omega  _1)$  in the operator topology of $B(\Sigma_k,\Sigma_k)$.
Thus, writing $$P(\omega)P(\omega_1)=(1+(P(\omega)-P(\omega_1)))P(\omega_1),$$ we see that
  there exists an $a>0$ such that
 if
 $|\omega -\omega _1|<a$   the  map
$P(\omega )P(\omega _1)  $   restricts into an isomorphism
from $N_g^\perp (\mathcal{L}_{\omega _1 }^*)\cap   X_k$  to $   N_g^\perp (\mathcal{L}_{\omega }^*)  \cap X_k$
for any $k\ge - {\underline{n}}$ with  $X_k$ equal either to   $H^k$ or to  $ \Sigma _k$. Hence
     for   $k\ge - {\underline{n}}$ the
map
\begin{equation} \label{eq:coordinate} \begin{aligned} &
  \R   \times \{ \omega : |\omega -\omega _1  |<a  \} \times ( N_g^\perp(\mathcal{L}_{\omega_1}^*) \cap  X_k)  \to X_k  , \\&  (\vartheta , \omega , r) \to u=   e^{-J
\vartheta }
  (\phi _{\omega} + P(\omega)  r),
\end{aligned} \end{equation}
is for  $\| r\| _{ X _k}<a$  a local $C^\infty$ diffeomorphism in the image.
Therefore, $(\vartheta , \omega , r)$ in \eqref{eq:coordinate} provides an initial system
of independent coordinates.

If we consider the function $Q=Q(u)$, the map $(\vartheta , \omega , r)\to (\vartheta , Q , r)$ is a local diffeomorphism because of the assumption (H5).
Indeed, applying implicit function theorem to
\begin{align}\label{eq:coorQro}
F_2(Q,\rho,r,\omega)=Q(\phi_{\omega}-P_{N_g}(\omega)r)+\rho+\<\phi_{\omega}-P_{N_g}(\omega)r, r\>-Q,
\end{align}
there exists $\omega(Q,\rho,r)$ which is a smooth function defined in the neighborhood of $(Q(\phi_{\omega_0}),0,0)\in \R\times \R\times \Sigma_{-\underline{n}}$.
Notice that $F_2(Q,Q(r),r,\omega)=Q(\phi_\omega+P(\omega)r)-Q$.
We have put an auxiliary variable $\rho$ because if we directly put $\omega(Q,r)$ to be the implicit function of $\tilde F_2=Q(\phi_\omega+P(\omega)r)-Q$, then we will only able to define $\omega$ in the neighborhood of $(Q(\phi_{\omega_0}),0)$ in $\R\times L^2$.
Differentiating, \eqref{eq:coorQro} by $Q,\rho,r$, we have
\begin{align}
\partial_Q \omega&=-\partial_{\rho}\omega=A^{-1},\label{eq:pqo}\\
\<\nabla_r \omega,X\>&=A^{-1}\(\<P(\omega)r,P_{N_g}(\omega)X\>+\<P_{N_g}(\omega)r,X\>\),\label{eq:pqr}
\end{align}
where $A=\<\phi_{\omega}+P(\omega)r,\partial_\omega \phi_\omega+\partial_\omega P(\omega)r\>$.

We now expand $\Omega$ by using the coordinates $(\vartheta,Q,r)$.
Notice that $$X=du X = \partial_{\vartheta} u X_\theta + \partial_Q u X_Q + \<\nabla_r u, X_r\>,$$ where $X_\vartheta=d\vartheta X, X_Q=dQ X$ and $X_r = dr X$.
Then, after some cancelations, we obtain
\begin{align}
\Omega=&-d\vartheta\wedge dQ + \Omega(P(\omega)dr,P(\omega)dr)\label{eq:Oexpand}\\
&+A^{-1}dQ\wedge \Omega(\partial_\omega P(\omega)r,P(\omega)dr)+\<\nabla_r \omega+\partial_q \omega r,dr\>\wedge \Omega(\partial_\omega P(\omega) r, P(\omega)dr).\nonumber
\end{align}
Notice that the coordinates $(\vartheta,Q,r)$ are not a system of Darboux coordinates
for the symplectic form $\Omega$.

We now prepare some notations.
\begin{itemize}
\item
Let $F$ be a Frech\'et differentiable function.
Then, its hamiltonian vector    field $X_F $ is defined by  $ \Omega (X_F ,Y)=  dF (Y)$ for any given
  vector  $Y.$ In particular, we have $X_F=J\nabla F$.
\item
For    $F,G  $  two scalar valued functions,
we set the   Poisson bracket by
$
  \{ F,G \}  :=  dF (X_G ).
$
\item
If $\mathcal{G}  $   has values in a given
Banach space $\mathbb{E}$ and $G$ is a scalar valued function, then we set  $
  \{ \mathcal{G} ,G \}  :=  d\mathcal{G}  (X_G )$.
\end{itemize}

 In the coordinate system $(\vartheta ,Q, r )$  our NLS can be expressed as
\begin{align}\label{eq:hamE}
\dot Q= \{Q,E\}=(X_E)_Q,\quad \dot \vartheta = \{\vartheta,E\}=(X_E)_\vartheta,\quad \dot r= \{r,E\}=(X_E)_r.
\end{align}

Further, comparing the coefficients of $Y_\vartheta $ in $\Omega(X_E,Y)=dEY$ by \eqref{eq:Oexpand}, we have $(X_E)_Q=dQ X_E=0$.
Therefore, we have $\dot Q=0$.
Notice that this shows that with coordinates $  (\vartheta , Q , r)$  we have achieved
a \textit{reduction of order} in the system,  see \cite{olver} p. 412,  effectively reducing to the variable $r$ only.

\noindent In the
sequel we  choose  $  \omega _0$  such that if
$u_0$ is the initial value in  \eqref{NLS}, then
\begin{equation}\label{eq:p0}
 Q (\phi _{\omega_0})=Q (u_0).
\end{equation}
We consider (recall $q(\omega )= Q   ( \phi _{\omega })$)
\begin{align}\label{eq:eqK}
 {K} ( u):&= {E}  (u)-  {E} \left (  \phi _{\omega_0}\right
 ) +  \omega (u)   ( Q (u)   - q  (  {\omega_0})) .
\end{align}
Then, since $(X_Q)_\vartheta=-1,(X_Q)_Q=0$ and $(X_Q)_r=0$, we see that \eqref{eq:hamE} is equivalent to
 \begin{equation*}       \begin{aligned} &  \dot Q=0 \, , \quad  \dot \vartheta =\{ \vartheta  , K   \} +\omega \, , \quad \dot r=   \{ r , K   \} .
\end{aligned}
\end{equation*}
 See \cite{Cu0} Lemma 2.6 and Section 2.3, and it is important that $Q(u_0)= q(\omega _0)$.

In the sequel the changes of coordinates  will differ from the identity transformation
by perturbations that can be written in terms  of the two classes of symbols  which we introduce now.

		\begin{definition}\label{def:scalSymb} For $I$  an interval with 0 in the
interior,  $\mathcal{A}\subset  \R \times   \R \times (N_g^\perp(\mathcal{L}_{\omega_1}^*) \cap   \Sigma_{-n})$
  a neighborhood of $(q(\omega _1),   0 ,0)$,
we  say that   $ \mathfrak{F} \in C^{m}(I\times \mathcal{A},\R)$
 is $\mathcal{R}^{i,j}_{n, m}$
 if    there exists    a $C>0$   and a smaller neighborhood
  $\mathcal{A}'$ of  of $(q(\omega _1),   0 ,0)$
  s.t.
 \begin{equation}\label{eq:scalSymb}
  |\mathfrak{F}(t, Q ,   \varrho , r )|\le C \|  r\| _{\Sigma   _{-n}}^j (\|  r\|
  _{\Sigma   _{-n}}+|\varrho | +|Q  -q(\omega _1)  |)^{i} \text{  in $I\times
  \mathcal{A}'$}.
\end{equation}
 We  will write also  $\mathfrak{F}=\mathcal{R}^{i,j}_{ n,m}$ or
 $\mathfrak{F}=\mathcal{R}^{i, j}_{ n,m} (t, Q ,   \varrho , r )$.
Given a function $F:I\times \U _{\omega _1}\to \R$ for $\U _{\omega _1}$
a neighborhood of $\phi _{\omega _1} $ in   $L^2(\R ^2, \R^2)$, we say
that $F=\mathcal{R}^{i,j}_{ n,m}$ if there is  a $\mathcal{R}^{i,j}_{ n,m}$
function s.t. $F(t, u ) = \mathcal{R}^{i, j}_{ n,m} (t, Q ,   Q(r) , r )$.

We say  $\mathfrak{F}=\mathcal{R}^{i, j} _{n, \infty}$  if $\mathfrak{F}=\mathcal{R}^{i,j}_{n, m}$ for all $m $.
We say    $F=\mathcal{R}^{i, j}_{\infty, m} $       if   we can take
$n$ arbitrarily large. If   $F=\mathcal{R}^{i, j}_{\infty, m} $ for any $m$, we set $F=\mathcal{R}^{i, j}  $.
\end{definition}

\begin{definition}\label{def:opSymb}  A    $\mathfrak{T} \in C^{m}(I\times
\mathcal{A},\Sigma   _{n}  (\R^2, \C ^{2 }))$,  with $I\times \mathcal{A}$
like above,
 is $ \mathbf{{S}}^{i,j}_{n,m} $   and  we  write as above
 $\mathfrak{T}= \mathbf{{S}}^{i,j}_{n,m}$  or   $\mathfrak{T}= \mathbf{{S}}^{i,j}_{n,m} (t, Q ,   \varrho , r )$,
 if      there exists  a $C>0$   and a smaller neighborhood
  $\mathcal{A}'$ of  $(p_0, p_0  , 0  )$   s.t.
 \begin{equation}\label{eq:opSymb}
  \|\mathfrak{T}(t, Q ,   \varrho , r )\| _{\Sigma   _{n}}\le C \|  r\| _{\Sigma   _{-n}}^j (\|  r\|
  _{\Sigma   _{-n}}+|\varrho | +|Q  -q(\omega _1)  |)^{i}  \text{  in
  $I\times \mathcal{A}'$}.
\end{equation} We  use notation
$\mathfrak{T}=\mathbf{{S}}^{i,j}_{n,\infty }$, $\mathfrak{T}=\mathbf{{S}}^{i,j}_{\infty,m}$    and  $\mathfrak{T}=\mathbf{{S}}^{i,j} $
as
above. As above, given a function $T:I\times \U _{\omega _1}\to \Sigma   _{-n}$ we write  $F=\mathbf{{S}}^{i,j}_{ n,m}$ if there is  a $\mathbf{{S}}^{i,j}_{ n,m}$
function s.t. $T(t, u ) =\mathbf{{S}}^{i, j}_{ n,m} (t, Q ,   Q(r) , r )$.

\end{definition}

Next we consider the following symplectic form: \begin{equation} \label{eq:Omega0} \Omega _0  := - d \vartheta
\wedge dQ    +  \Omega    (dr  \ ,  dr \ )
 .
\end{equation}
This is how our symplectic $\Omega $  form should look in appropriate coordinates. Indeed in   Section 3 \cite{Cu0}  the following Darboux Theorem is proved.\begin{proposition}[Darboux Theorem]
  \label{prop:darboux}  There is a local diffeomorphism $\mathcal{F}$ around
  $\phi _{\omega _1}$ in $L^2(\R ^2, \R^2)$   such that  $\mathcal{F}^*\Omega  = \Omega _0$
  and
  which in
  the $(\vartheta , Q, r) $ coordinates is of the form
  \begin{equation} \label{eq:darb1}
    \begin{aligned} &
    \vartheta '=\vartheta + \mathcal{R}^{0,2}( Q ,Q(r),  r)\, , \quad Q '=Q  \, , \\&
    r'  =e^{J\mathcal{R}^{0,2}( Q ,Q(r),  r) } ( r+  \mathbf{{S}}^{1,1}( Q ,Q(r),  r) ).
\end{aligned} \end{equation}
 \end{proposition}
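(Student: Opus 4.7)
The plan is to prove this by Moser's deformation trick, adapted to the infinite-dimensional setting. The starting point is the expansion \eqref{eq:Oexpand} which shows that, in the coordinates $(\vartheta, Q, r)$, the difference $\Omega - \Omega_0$ consists of terms that vanish at $r=0$ and have the symbol structure of $\mathbf{S}^{1,1}$-type. Crucially, inspecting \eqref{eq:Oexpand}, $\Omega - \Omega_0$ contains no $d\vartheta$ component; this will be the mechanism that forces $Q' = Q$ in the end.

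First I would introduce the path of 2-forms $\Omega_t := \Omega_0 + t(\Omega - \Omega_0)$ for $t \in [0,1]$ and verify that each $\Omega_t$ is (weakly) non-degenerate on a sufficiently small neighborhood of $\phi_{\omega_1}$ in $L^2(\R^2,\R^2)$; this is automatic since $\Omega_0$ is non-degenerate, $\Omega_t - \Omega_0$ is of class $\mathbf{S}^{1,1}$, and one restricts to small $\|r\|$. Next I would apply a Poincar\'e-type lemma along the linear contraction $s \mapsto (\vartheta, Q, sr)$ to write $\Omega - \Omega_0 = d\alpha$ for a smooth 1-form $\alpha$ that inherits the $\mathbf{S}^{i,j}$ estimates of $\Omega - \Omega_0$. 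Because the contraction fixes $(\vartheta, Q)$ and only scales $r$, and because $\Omega - \Omega_0$ has no $d\vartheta$-component, the primitive $\alpha$ can be chosen with no $d\vartheta$-component either, and with all coefficients supported in the $\mathbf{S}^{i,j}$ and $\mathcal{R}^{i,j}$ symbol classes.

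Then I would define the time-dependent vector field $Y_t$ implicitly by
$$i_{Y_t}\Omega_t = -\alpha.$$
The non-degeneracy of $\Omega_t$ guarantees $Y_t$ exists and is smooth in $t$ and in the base point, and solving componentwise against the expansion \eqref{eq:Oexpand} one checks that $Y_t$ has vanishing $\partial_Q$-component (this is the dual statement to $\alpha$ having no $d\vartheta$-part) and that its $\partial_\vartheta$- and $r$-components lie respectively in $\mathcal{R}^{0,2}$ and $\mathbf{S}^{1,1}$ symbol classes. Integrating the ODE $\frac{d}{dt}\mathcal{F}_t = Y_t \circ \mathcal{F}_t$ with $\mathcal{F}_0 = \mathrm{id}$ gives, by Cartan's formula and $\dot\Omega_t = \Omega - \Omega_0$,
$$\frac{d}{dt}\mathcal{F}_t^*\Omega_t = \mathcal{F}_t^*\bigl(\dot\Omega_t + d\,i_{Y_t}\Omega_t\bigr) = \mathcal{F}_t^*\bigl((\Omega-\Omega_0) - d\alpha\bigr) = 0,$$
so $\mathcal{F} := \mathcal{F}_1$ satisfies $\mathcal{F}^*\Omega = \Omega_0$. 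The specific form \eqref{eq:darb1} then follows from the symbol class estimates on $Y_t$: the $Q$-equation is trivial (hence $Q' = Q$), the $\vartheta$-equation produces a correction in $\mathcal{R}^{0,2}$, and the $r$-equation gives $r' = r + \mathbf{S}^{1,1}$, modulated by the rotation $e^{J\mathcal{R}^{0,2}}$ that arises because the coordinate $r$ transforms equivariantly under the $U(1)$-action modulated out through $\vartheta$.

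The main obstacle will be the infinite-dimensional bookkeeping: ensuring that the flow of $Y_t$ exists on a uniform neighborhood for all $t \in [0,1]$, that $Y_t$ maps the appropriate subspace $N_g^\perp(\mathcal{L}_{\omega_1}^*) \cap \Sigma_{-\underline{n}}$ to itself (so that $r$ remains in the correct slice), and that the $\mathbf{S}^{i,j}$ and $\mathcal{R}^{i,j}$ symbol estimates are preserved under integration. The non-degeneracy of $\Omega_t$ as a weak symplectic form on a scale of spaces also requires some care when inverting $i_{\cdot}\Omega_t$; this is precisely the technical content of Section 3 of \cite{Cu0}, to which we refer for the full details.
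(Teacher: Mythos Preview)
Your proposal follows essentially the same Moser deformation argument as the paper: interpolate $\Omega_t$ between $\Omega_0$ and $\Omega$, find a primitive of $\Omega-\Omega_0$, solve $i_{Y_t}\Omega_t=-\alpha$, and integrate the flow. The only structural difference is that the paper obtains the primitive $\Gamma$ by directly subtracting the explicit Liouville 1-forms $2B=\Omega(u,\cdot)$ and $2B_0=Q\,d\vartheta+\Omega(r,dr)$, rather than by a Poincar\'e homotopy in the $r$-variable; both routes work.

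There is, however, one imprecision in your symbol-class bookkeeping. The $r$-component of $Y_t$ is \emph{not} purely $\mathbf{S}^{1,1}$. The paper's explicit computation gives $\Gamma_r=\mathbf{S}^{1,1}+\mathcal{R}^{0,2}\,Jr$, and your Poincar\'e formula yields the same thing once you contract the term $\langle\partial_\rho\omega\,r,dr\rangle\wedge\Omega(\partial_\omega P(\omega)r,P(\omega)dr)$ from \eqref{eq:Oexpand} against $r\,\partial_r$: the second piece of that contraction is a scalar of type $\mathcal{R}^{0,2}$ times $\langle r,dr\rangle=\Omega(Jr,dr)$. It is precisely this $\mathcal{R}^{0,2}\,Jr$ summand in the vector field that, when you integrate $\dot r=\mathbf{S}^{1,1}+\mathcal{R}^{0,2}\,Jr$, produces the prefactor $e^{J\mathcal{R}^{0,2}}$ in \eqref{eq:darb1}. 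Your appeal to ``$U(1)$-equivariance'' gestures at the right phenomenon but does not locate the concrete mechanism; without identifying the $\mathcal{R}^{0,2}\,Jr$ piece you would only obtain $r'=r+\mathbf{S}^{1,1}$, which is not the stated form.
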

\qed

\begin{remark}\label{rem:omegas}
  Notice   that the idea of taking as fixed point $\phi _{\omega _1}$  rather than $\phi _{\omega _0}$ as in  \cite{Cu0},  is taken from \cite{bambusi}.   The proof  of Prop. \ref{prop:darboux}  is unaffected.
\end{remark}

For the convenience of the readers we give a sketch of the proof here.

\begin{proof}[Sketch of the proof of Proposition \ref{prop:darboux}]
To make a change of coordinate to convert the symplectic form $\Omega$ into $\Omega_0$ we need three steps.
First, we find a $1$-form $\Gamma$ s.t.\ $\Omega-\Omega_0=d\Gamma$.
Next, we solve $i_{\mathcal X^s} \Omega_s = -\Gamma$, where $i_X \omega(Y):=\omega(X,Y)$ and $\Omega_s:=\Omega_0+s(\Omega-\Omega_0)$.
Finally, let $\mathcal Y_s$ be the flow of $\frac{d}{ds}\mathcal Y_s = \mathcal X^s(\mathcal Y_s)$.
Then, we have
\begin{align*}
\frac{d}{ds}\(\mathcal Y_s^* \Omega_s\)=\mathcal Y_s^*\(\mathcal L_{\mathcal X^s} \Omega_s -\partial_s \Omega_s\)=\mathcal Y_s^* (d i_{\mathcal X^s} \Omega_s - d \Gamma)=0.
\end{align*}
Thus, $\mathcal Y:=\mathcal Y^1$ gives us the desired transformation.
This is a standard proof of Darboux theorem (see \cite{hofer}).

\noindent In our situation, we have to care about the regularity of the transformation
(or in other words, error from the identity).
Therefore, we need to compute $\Gamma$ rather explicitly.

First, we seek   $\Gamma$. It suffices to find some $\Gamma$ satisfying $\Gamma=B-B_0+dC$, where
\begin{align*}
2B_0&=Qd\vartheta + \Omega(r,dr),\\
2B&= \Omega(u,\cdot)\\&=Q d\vartheta + \Omega(P(\omega)r,dr)+\Omega(\phi_\omega,P(\omega)dr)+ \Omega(\phi_\omega+P(\omega)r,\partial_\omega \phi_\omega +\partial_\omega P(\omega) r) d \omega.
\end{align*}
It is elementary that $dB=\Omega$, $dB_0=\Omega_0$.
We have
\begin{align*}
2(B-B_0)=&d\( \Omega(\phi_\omega,P(\omega)r)\) + \Omega(P(\omega)r,\partial_\omega P r) \partial_Q \omega dQ\\&+ \Omega(-P_{N_g}(\omega)r+\Omega(P(\omega)r,\partial_\omega P(\omega)r)\partial_\rho \omega Jr + \Omega(P(\omega)r,\partial_\omega P(\omega)r)\nabla_r \omega,dr) .
\end{align*}
Therefore, we can choose $\Gamma=\Omega(\Gamma_r,dr)+\Gamma_Q dQ$ as
\begin{align*}
2\Gamma_r&=-P_{N_g}(\omega)r+\Omega(P(\omega)r,\partial_\omega P(\omega)r)\partial_\rho \omega Jr + \Omega(P(\omega)r,\partial_\omega P(\omega)r)\nabla_r \omega,\\\
2\Gamma_Q&=\Omega(P(\omega)r,\partial_\omega P r) \partial_Q \omega.
\end{align*}
Since $P_{N_g}(\omega) r=P_{N_g}(\omega)\(P(\omega_0)-P(\omega)\)r$ and $|\omega-\omega_0|\sim Q(r)$, we see
$\Gamma_r = \mathbf S ^{1,1}+\mathcal R^{0,2} Jr$ and $\Gamma_Q=\mathcal R^{0,2}$.

\noindent Next, we solve $i_{\mathcal X^s} \Omega_s =-\Gamma$. Since $s\(\Omega(\mathcal X^s,\cdot)-\Omega_0(\mathcal X^s,\cdot)\)$ can be handled as a perturbation, the main part of the equation will be $\Omega_0(\mathcal X^s,\cdot)=-\Gamma$.
Therefore, we have $\mathcal X^s_Q=0$, $\mathcal X^s_\theta=\mathcal R^{0,2}$ and $\mathcal X^s_r=\mathcal S^{1,1}+\mathcal R^{0,2}Jr$.
Finally, solving $\frac{d}{ds}\mathcal Y_s = \mathcal X^s(\mathcal Y_s)$, we have the conclusion.
\end{proof}

It is well known that normal forms processes are based on Taylor expansions
of the Hamiltonian, see \cite{hofer}. So we need an expansion of the functional $K$ defined
in \eqref{eq:eqK} in terms of the Darboux coordinates. This is provided by the following,
proved in   Lemma 4.3 \cite{Cu0}.
\begin{lemma}
  \label{lem:back} Consider an integer $L\in \N $  s.t. $L>p$ for $p$ the exponent \eqref{eq:hyph2} in  hypothesis (H2).
  For any preassigned $(k,m)$ and in the coordinates $( Q, \vartheta , r)$ of \eqref{eq:darb1}, $K$ admits the  expansion
\begin{align}     &
  d(\omega )- d(\omega _0) - (\omega - \omega _0)q(\omega _0)+   \frac  {1}{2}\Omega (  \mathcal{L}_\omega P(\omega )r,P(\omega ) r )
   +  \resto ^{1,2} _{k,m}   ( Q ,Q(r) , r)   +E_P(P(\omega )r)
+\textbf{R} ^{\prime \prime},
\nonumber \\&
\textbf{R} ^{\prime \prime }:=     \sum _{d=2} ^{L-1}
\langle B_{d } ( Q ,Q (r)  , r ),  (P(\omega)r)  ^{   d} \rangle
      +\int _{\mathbb{R}^2}
B_L (x, r(x),  Q ,Q (r)  , r     )   (P(\omega)r)^{   L}(x) dx \text{ with:} \label{eq:back1}
\end{align}
\begin{itemize}
\item   $ d(\omega ) = E (\phi _\omega ) +\omega q(\omega )$;
  \item   $B_2(q(\omega _0), 0,0  )=0$;

\item $(P(\omega)r)^d(x)$  represent $d-$products of components of $P(\omega)r$;

  \item
$B_{d
}( \cdot ,  Q ,\varrho  , r  ) \in C^{m } ( \U _{-k},
\Sigma _k (\mathbb{R}^2, B   (
 (\mathbb{R}^{2  })^{\otimes d},\mathbb{R} ))) $  for $2\le d \le 4$ with $\U _{-k}\subset \Ph ^{-k}$ a neighborhood of   $(q(\omega _1),   0 ,0)$ in $  \R \times   \R \times (N_g^\perp(\mathcal{L}_{\omega_1}^*) \cap   \Sigma_{-n})$;
 \item  for
$ \zeta \in \mathbb{R}^{2   }$
 and $( Q, \varrho ,r) \in \U _{-k}$
 we have  for $i+j\le m$
\begin{equation} \label{eq:B5}\begin{aligned} &  \|  \partial _r^j\partial _{  \zeta,Q ,\varrho     }
^iB_L(\cdot ,\zeta,  Q ,\varrho  , r  ) \| _{B  ( \Sigma_{-k} ^{\otimes j} ,  \Sigma _k(\mathbb{R}^2,   B   (
 (\mathbb{R}^{2   })^{\otimes L},\mathbb{R} ))} \le C_i .
 \end{aligned}  \end{equation}
\end{itemize}
\end{lemma}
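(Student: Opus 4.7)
My plan is to expand $K$ first in the pre-Darboux modulation coordinates $(\vartheta,\omega,r)$ of Lemma \ref{lem:modulation}, and then pull back through \eqref{eq:darb1}. Set $h := P(\omega)r$. By the $U(1)$-invariance of $E$ and $Q$,
\begin{equation*}
K(u) = E(\phi_\omega+h) - E(\phi_{\omega_0}) + \omega\bigl(Q(\phi_\omega+h) - q(\omega_0)\bigr),
\end{equation*}
and since $\phi_\omega\in N_g(\mathcal{L}_\omega^*)$ while $h\in N_g^\perp(\mathcal{L}_\omega^*)$ we have $\langle\phi_\omega,h\rangle=0$, whence $Q(\phi_\omega+h) = q(\omega)+Q(h)$. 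Taylor expanding $E$ at $\phi_\omega$ and using $\nabla E(\phi_\omega) = -\omega\phi_\omega$, the linear term $\langle\nabla E(\phi_\omega),h\rangle$ vanishes. The surviving mass contribution $\omega Q(h) = \tfrac{\omega}{2}\|h\|^2$ combines with $\tfrac12\langle\nabla^2 E(\phi_\omega)h,h\rangle$ into $\tfrac12\langle(\nabla^2 E(\phi_\omega)+\omega)h,h\rangle = \tfrac12\Omega(\mathcal{L}_\omega h,h)$, using $\mathcal{L}_\omega = J(\nabla^2 E(\phi_\omega)+\omega)$ and $\Omega(\cdot,\cdot) = \langle J^{-1}\cdot,\cdot\rangle$. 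Regrouping the scalar contributions via $E(\phi_\omega)+\omega q(\omega) = d(\omega)$ then produces the three deterministic summands and the quadratic term of the statement.

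For the genuinely nonlinear interaction, split $E_P(u) := \tfrac12\int B(|u|^2)\,dx$. Writing $|\phi_\omega+h|^2 = |\phi_\omega|^2 + 2\phi_\omega\!\cdot\!h + |h|^2$ and Taylor-expanding $B$ pointwise at $|\phi_\omega(x)|^2$ in the increment $2\phi_\omega\!\cdot\!h + |h|^2$ up to order $L-1$ with integral remainder of order $L$ produces an expansion whose coefficients are finite polynomials in $\phi_\omega(x)$ multiplied by derivatives of $B$ at $|\phi_\omega(x)|^2$. Isolate the pure-$h$ piece as $E_P(h)$, and collect the remaining $d$-linear cross terms $(2\le d\le L-1)$ together with the order-$L$ integral remainder to form $\mathbf{R}''$. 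Schwartz decay in $x$ of each $B_d$ and the regularity bound \eqref{eq:B5} for $B_L$ follow from the exponential decay of $\phi_\omega$ implied by (H3)--(H4) together with \eqref{eq:hyph2} in (H2); the choice $L>p$ guarantees that $r$-derivatives acting on $B_L$ only hit bounded Schwartz factors and never the polynomial exponent in \eqref{eq:hyph2}. The normalization $B_2(q(\omega_0),0,0)=0$ holds because at $\omega=\omega_0$, $h=0$ the full $h$-quadratic content of $E(\phi_\omega+h)$ has by construction been absorbed into $\tfrac12\Omega(\mathcal{L}_\omega h,h)$, leaving nothing for the interaction coefficient at the reference point.

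Finally, compose with the Darboux change \eqref{eq:darb1}, which sends $r\mapsto e^{J\mathcal{R}^{0,2}}(r+\mathbf{S}^{1,1})$. The rotation $e^{J\mathcal{R}^{0,2}}$ preserves $\Omega$ and alters the quadratic form $\tfrac12\Omega(\mathcal{L}_\omega h,h)$ only through the difference $e^{-J\mathcal{R}^{0,2}}\mathcal{V}_\omega e^{J\mathcal{R}^{0,2}} - \mathcal{V}_\omega$, which is a factor of weight $(0,2)$ times a quadratic in $r$ and thus produces a correction of class $\mathcal{R}^{1,2}_{k,m}$. The additive shift $\mathbf{S}^{1,1}$ substituted into the quadratic and polynomial terms always contributes at least one extra factor of weight $(\|r\|_{\Sigma_{-n}}+|\varrho|+|Q-q(\omega_1)|)$ beyond the existing factors of $r$, so each cross term lies in $\mathcal{R}^{1,2}_{k,m}$. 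The main obstacle is the bookkeeping: one must check that no stray $\mathcal{R}^{0,2}$ or $\mathcal{R}^{1,1}$ term creeps in, which relies precisely on the $(1,1)$ weight of $\mathbf{S}^{1,1}$ given by Proposition \ref{prop:darboux} and on $B_2(q(\omega_0),0,0)=0$. Smooth $\omega$-dependence and Schwartz class of the $B_d$, $B_L$ in the new coordinates then follow from the $C^\infty$ character of the Darboux diffeomorphism together with \eqref{eq:projreg} applied to the projections $P_{N_g}(\omega)$, $P(\omega)$.
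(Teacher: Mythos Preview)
Your proposal is correct and follows essentially the same route as the paper's (sketch of) proof: both reduce to a Taylor expansion of $K(u)=S_\omega(u)-S_{\omega}(\phi_{\omega_0})$ at $\phi_\omega$, using $\nabla S_\omega(\phi_\omega)=0$ to kill the linear term and identifying the Hessian piece with $\tfrac12\Omega(\mathcal{L}_\omega P(\omega)r,P(\omega)r)$. Your write-up is considerably more detailed than the paper's three-line sketch---in particular you make explicit the pointwise Taylor expansion of $B(|\phi_\omega+h|^2)$ that produces the $B_d$ and the integral remainder $B_L$, and you track carefully how the Darboux correction $r\mapsto e^{J\mathcal{R}^{0,2}}(r+\mathbf{S}^{1,1})$ feeds only $\mathcal{R}^{1,2}_{k,m}$ terms---but the underlying mechanism is the same.
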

\begin{remark}\label{rem:clarif} We have $d(\omega )- d(\omega _0) - (\omega - \omega _0)q(\omega _0) =O(\omega - \omega _0) ^2 =  \resto ^{2,0}(Q (r) )  + \resto ^{1,2} _{k,m}   ( Q ,Q (r) , r)$.

\noindent In  Lemma 4.3 \cite{Cu0} inequality \eqref{eq:B5} is stated for $|\zeta  |\le \varepsilon$
for some small $\varepsilon >0$, but in fact in the proof is unnecessary, thanks to  (H2).  Notice also that  $L=5$ in \cite{Cu0}, but a similar  proof holds for our choice of $L$.
\end{remark}

\begin{proof}[Sketch of the proof]
Notice that here we are just expanding $K(u)=S_{\omega}(u)-S_{\omega}(\phi_{\omega_0})$ such as
\begin{align*}
K(\phi_\omega+P(\omega)r)=S_{\omega}(\phi_\omega)+\<\nabla^2 S_{\omega}(\phi_{\omega})P(\omega)r,P(\omega)r\>+o\(\(P(\omega)r\)^2\)-d(\omega_0)+(\omega-\omega_0)q(\omega_0),
\end{align*}
where we have used $\nabla S_\omega(\phi_\omega)=0$.
\end{proof}

If we expand $P(\omega)r=r + (P(\omega)-P(\omega _1))r$  we obtain what follows, see Lemma 4.4 \cite{Cu0}.
 \begin{lemma}
  \label{lem:back11} The expansion of $K$ in  Lemma \ref{lem:back}
  can be rewritten as follows,with similar notation:
\begin{align}     &
  K=\resto ^{2,0} _{k,m} (Q,Q (r) ) +  {2}^{-1}\Omega (  \mathcal{L} _{\omega _1}  r,  r )   +  \resto ^{1,2} _{k,m}   ( Q ,Q (r) , r)   +E_P( r)+\textbf{R} ^{\prime  },
\nonumber
\\&   \textbf{R} ^{\prime   }:=     \sum _{d=2} ^{L-1}
\langle B_{d } ( Q ,Q (r)  , r ), r  ^{   d} \rangle
      +\int _{\mathbb{R}^2}
B_L (x, r(x),  Q ,Q (r)  , r     )  r^{   L}(x) dx \text{ .} \label{eq:back1}
\end{align}
 \end{lemma}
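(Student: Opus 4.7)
The plan is to substitute $P(\omega)r = r + [P(\omega)-P(\omega_1)]r$ throughout the expansion from Lemma \ref{lem:back} and to absorb the correction terms into the symbol classes $\mathcal{R}^{i,j}_{k,m}$ and $\mathbf{S}^{i,j}_{k,m}$. The argument is essentially bookkeeping: no new analytic estimates are required beyond those already built into Lemma \ref{lem:back}, the smoothing property \eqref{eq:projreg} of $P_{N_g}(\omega)$, and the smooth dependence of $\phi_\omega$ on $\omega$.

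The key preparatory observation is that, since $r\in N_g^{\perp}(\mathcal{L}_{\omega_1}^*)$, one has $P(\omega_1)r=r$, whence
\begin{equation*}
P(\omega)r - r = -P_{N_g}(\omega)r = -P_{N_g}(\omega)\bigl[P(\omega_1)-P(\omega)\bigr]r.
\end{equation*}
By \eqref{eq:projreg} the operator $P_{N_g}(\omega)$ maps $\mathcal{S}'$ into $\mathcal{S}$ smoothly in $\omega$, and by (H4) the difference $P(\omega)-P(\omega_1)$ has operator norm $O(|\omega-\omega_1|)$ between any $\Sigma_k$'s. Combining this with the modulation relation $|\omega-\omega_1|= O(|Q-q(\omega_1)|+\|r\|_{\Sigma_{-n}}^2)$ (which follows from Remark \ref{rem:clarif} together with the identity $Q=q(\omega)+Q(P(\omega)r)$ up to a $P_{N_g}$ correction), I conclude
\begin{equation*}
(P(\omega)-P(\omega_1))r \in \mathbf{S}^{1,1}_{n,\infty}(Q,Q(r),r).
\end{equation*}

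With this in hand, the summands of Lemma \ref{lem:back} are processed term by term. The scalar piece $d(\omega)-d(\omega_0)-(\omega-\omega_0)q(\omega_0)$ is precisely $\mathcal{R}^{2,0}_{k,m}(Q,Q(r))+\mathcal{R}^{1,2}_{k,m}$ by Remark \ref{rem:clarif}. For the quadratic form, writing $P(\omega)r = r + \mathbf{S}^{1,1}$ and expanding bilinearly, the cross terms pair $r$ with a Schwartz-class $\mathbf{S}^{1,1}$ factor through $\mathcal{L}_\omega$, which contributes to $\mathcal{R}^{1,2}_{k,m}$; then I replace $\mathcal{L}_\omega$ by $\mathcal{L}_{\omega_1}$ using
\begin{equation*}
\mathcal{L}_\omega - \mathcal{L}_{\omega_1} = J(\omega - \omega_1) + J(\mathcal{V}_\omega - \mathcal{V}_{\omega_1}),
\end{equation*}
where the potential difference is smoothing and $O(|\omega-\omega_1|)$ by (H4), so the resulting error is again $\mathcal{R}^{1,2}_{k,m}$. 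For $E_P(P(\omega)r)$ and for the finite sum $\sum_{d=2}^{L-1}\langle B_d,(P(\omega)r)^d\rangle$, multilinear expansion produces monomials $r^{d-j}\cdot(\mathbf{S}^{1,1})^j$ with $j\geq 1$, which either modify the coefficient of a lower-degree $r^{d-j}$ term (absorbed into a new $B_{d-j}$ that inherits the regularity and Schwartz-decay bounds) or, when $d-j<2$, land in $\mathcal{R}^{1,2}_{k,m}$. For the $L$-th order tail, the bound \eqref{eq:B5} is preserved since the extra $\mathbf{S}^{1,1}$ factors only improve localization.

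The main bookkeeping obstacle will be verifying the stability of the coefficient estimates \eqref{eq:B5} for the newly produced $B_d$: one must check that the compositions with the smooth families $\omega\mapsto P(\omega)$ and the Schwartz-class $P_{N_g}(\omega)$ preserve the spatial decay and derivative bounds in \eqref{eq:B5} uniformly in the parameters. This is the same verification carried out in Lemma 4.4 of \cite{Cu0}, and once it is in place, collecting the pieces yields the expansion \eqref{eq:back1} in the stated form.
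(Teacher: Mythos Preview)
Your proposal is correct and follows exactly the approach the paper indicates: the paper's own treatment of this lemma consists of the single remark ``If we expand $P(\omega)r=r + (P(\omega)-P(\omega _1))r$ we obtain what follows, see Lemma 4.4 \cite{Cu0},'' and you have fleshed out precisely that substitution and the resulting bookkeeping. Your identification of $(P(\omega)-P(\omega_1))r$ as an $\mathbf{S}^{1,1}$ symbol via the smoothing of $P_{N_g}(\omega)$ and the smallness of $\omega-\omega_1$, together with the term-by-term absorption into the symbol classes, matches the argument of Lemma~4.4 in \cite{Cu0} to which the paper defers.
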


\section{Spectral coordinates   associated to $\mathcal{L} _{\omega _1}$}
\label{sec:speccoo}

Recall that   $r\in N_g^\perp (\mathcal{L}_{\omega _1 }^*)\cap  L^2(\R ^2, \R ^2)$.
We consider the spectral decomposition of $r$ in terms of $\mathcal{L} _{\omega _1}$:
  \begin{equation}
  \label{eq:decomp2}\begin{aligned}&
  r  =\sum _{j=1}^{\mathbf{n}}z_j  \xi
  _j  (\omega _1 )  +
\sum _{j=1}^{\mathbf{n}}\overline{z}_j \overline{\xi  } _{  j} (\omega _1 )   + f    \text{  where   $f\in L _{c}^2(\omega _1)$ and also $f\in L^2 (\R ^2, \R ^2)$.}  \end{aligned} \end{equation}
This yields new coordinates $r\to (z,f)$ to replace $r$.

 \noindent Correspondingly we have the expansion
\begin{equation} \label{eq:decompOm}  \Omega    (dr \ ,  dr \ ) =\sum _{j=1}^{\mathbf{n}} \im \ s_j \ dz_j\wedge d\overline{z}_j +\Omega    (df \ ,  df \ ) .
\end{equation}
Equation   $\dot r =    \{ r , K   \} $ splits into
\begin{equation}\label{eq:systf}
 \im \dot z_j =  s_j  \frac{\partial}{\partial \overline{z}_j}K \, , \quad \dot f =    \{ f , K   \},
\end{equation}
where we recall that $s_j\in \{ 1,-1\}$ and $s_j=1$ for at least one $j$.

We have reduced our NLS to   system  \eqref{eq:systf}.
Obviously, having replaced $r$ with $(z,f)$, we need to rewrite the expansion of $K$ in {Lemma}
  \ref{lem:back11}  in terms of   $(z,f)$. This is done in  Lemma 5.4 \cite{Cu0}, which we quote.
\begin{lemma}
  \label{lem:ExpH11}  In the coordinate system $( z,f)$
  near $(0,0)$ for any preassigned pair
	  $(k,m)$  we have an expansion
\begin{equation}  \label{eq:ExpH11} \begin{aligned} &  {K} = \resto ^{2,0} _{k,m} (Q, Q (f) )+H_2 '   +\sum _{j=-1}^{4} {\textbf{R} _{ j }} + \resto ^{1,2}_{k,m}(Q ,Q (f), f)  \text{  with   what follows.} \end{aligned} \end{equation}
 \begin{itemize}
\item[(1)]
 For $\varrho =Q (f) $
\begin{equation}  \label{eq:ExpH2} H_2 '=  -\sum _{j=1}^{\mathbf{n}}
s_j \lambda _j |z_j|^2 +  \sum _{\substack{ |\mu +\nu |=2\\
\mathbf{e}   \cdot (\mu -\nu )=0}}   \resto ^{1,0}_{k,m} (Q ,\varrho )
   z^\mu
\overline{z}^\nu   + 2 ^{-1} \Omega (\mathcal{L}_{\omega
 _1}   f,     f) .
\end{equation}

\item[(2)] We have for $\varrho =Q (f) $:
\begin{equation*}   \begin{aligned} &{ {\mathbf{R}} _{  - 1 }}=
\sum _{\substack{ |\mu +\nu |=2\\
\mathbf{e}  \cdot (\mu -\nu )\neq 0  }} \resto ^{1,0}_{k,m} (Q ,\varrho )  z^\mu
\overline{z}^\nu  +  \sum _{|\mu +\nu |  = 1} z^\mu \overline{z}^\nu  \langle
J   \mathbf{S }^{1,0}_{k,m} (Q ,\varrho ),f\rangle  ;\end{aligned}
\end{equation*} for $ {\mathbf{N}}$ as  in (H12),   $\varrho =Q (f) $,
$ g_{\mu \nu }(Q , \varrho )=\resto ^{0,0}_{k,m}(Q , \varrho )$, $ G_{\mu \nu }(Q , \varrho )=\mathbf{S} ^{0,0}_{k,m}(Q , \varrho )$ and with the symmetries $g_{\nu \mu }=\overline{g}_{\mu \nu }$  and $G_{\nu \mu }=-\overline{G}_{\mu \nu }$, we have
 \begin{equation*}    \begin{aligned} &   {\textbf{R} _0}=   \sum _{|\mu
    +\nu |= 3}^{2 \mathbf{{N}}+1} z^\mu
\overline{z}^\nu   g_{\mu \nu }(Q , \varrho ) ;  \quad {\textbf{R} _1}= \im
\sum _{|\mu +\nu |= 2}^{2 \mathbf{{N}} } z^\mu \overline{z}^\nu \langle J
G_{\mu \nu }(Q , \varrho ), f\rangle ; \\ &   {\textbf{R} _2}=
\langle \mathbf{S }^{1,0}_{k,m}(Q , \varrho ) ,   f  ^{   2} \rangle
     \text{ with $\mathbf{B}_{2 } (q(\omega _1) , 0)=0$},
\end{aligned}\nonumber \end{equation*}
  where   $f^d(x)$    represents schematically $d-$products of components of
  $f$;
   \begin{align} &   {\textbf{R} _3}=   \sum _{ \substack{
|\mu +\nu |=\\=  2N+2}} z^\mu \overline{z}^\nu  \resto ^{0,0}_{k,m}( Q   ,
z,  \varrho  , f)        +\sum _{ \substack{ |\mu +\nu |=\\= 2N+1}} z^\mu
\overline{z}^\nu  \langle J    \mathbf{S} ^{0,0}_{k,m}(Q  , z,   \varrho ,f ),
f\rangle  ;\nonumber \\ &   {\textbf{R} _4}=  \sum
_{d=2}^{L-1} \langle B_{d } (Q   ,  z ,\varrho ,f),   f  ^{   d} \rangle
       +\int _{\mathbb{R}^2}
B_L  (x, f(x),  Q , z,Q (f)  , f  )  f^{   L}(x) dx + E_P (  f),  \nonumber \end{align}
where  the $B $'s are like in Lemma \ref{lem:back}.
\end{itemize}
\end{lemma}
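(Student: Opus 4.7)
The plan is to obtain the stated expansion by substituting the spectral decomposition \eqref{eq:decomp2} into the expansion of $K$ provided by Lemma \ref{lem:back11} and then reorganizing the resulting terms according to their degree in $z,\bar z,f$ and their resonance properties. First I would start from
\begin{equation*}
K=\mathcal{R}^{2,0}_{k,m}(Q,Q(r)) + \tfrac12\Omega(\mathcal{L}_{\omega_1}r,r) + \mathcal{R}^{1,2}_{k,m}(Q,Q(r),r)+E_P(r)+\mathbf{R}'
\end{equation*}
and write $r=r_d+f$ with $r_d:=\sum_j z_j\xi_j(\omega_1)+\sum_j\bar z_j\bar\xi_j(\omega_1)$. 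Using Lemma \ref{lem:basis}, the relations $\Omega(\xi_j,\xi_k)=\Omega(\bar\xi_j,\bar\xi_k)=0$ and $\Omega(\xi_j,\bar\xi_k)=\im s_j\delta_{jk}$, together with $\mathcal{L}_{\omega_1}\xi_j=\im\lambda_j\xi_j$ and its conjugate, yield
\begin{equation*}
\tfrac12\Omega(\mathcal{L}_{\omega_1}r,r)=-\sum_{j=1}^{\mathbf n}s_j\lambda_j|z_j|^2+\tfrac12\Omega(\mathcal{L}_{\omega_1}f,f)
\end{equation*}
since $\Omega(\mathcal{L}_{\omega_1}r_d,f)+\Omega(\mathcal{L}_{\omega_1}f,r_d)=0$ follows from $f\in L^2_c(\omega_1)$ and the duality between $\mathcal{L}_{\omega_1}$ and $\mathcal{L}_{\omega_1}^*$. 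This produces the pure discrete and pure continuous parts of $H_2'$.

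Next I would deal with $Q(r)$. A direct computation using $\xi_j,\bar\xi_j\in N_g(\mathcal{L}_{\omega_1}^*)^\perp$ (so $f$ and $r_d$ are not $L^2$-orthogonal a priori, but their $Q$-interaction is linear in $f$ with Schwartz coefficients) gives $Q(r)=Q(f)+\sum_{|\mu+\nu|=2}c_{\mu\nu}z^\mu\bar z^\nu+\langle\mathbf{S}^{1,0},f\rangle$-type contributions. Feeding this into $\mathcal{R}^{2,0}_{k,m}(Q,Q(r))$ and $\mathcal{R}^{1,2}_{k,m}(Q,Q(r),r)$ produces, after Taylor expansion in the $Q(r)-Q(f)$ direction, the quadratic $z^\mu\bar z^\nu$ coefficients that go into the second piece of $H_2'$ (when resonant, i.e.\ $\mathbf{e}\cdot(\mu-\nu)=0$) and into $\mathbf{R}_{-1}$ (when non-resonant, and for the linear-in-$f$ mixed terms). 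This is the source of the terms $\resto^{1,0}_{k,m}(Q,\varrho)z^\mu\bar z^\nu$ and $z^\mu\bar z^\nu\langle J\mathbf{S}^{1,0}_{k,m},f\rangle$.

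I would then expand each polynomial contribution $\langle B_d(Q,Q(r),r),r^d\rangle$ from $\mathbf{R}'$ by multilinear expansion, writing every factor of $r$ as either an $r_d$-contribution (producing $z^\mu\bar z^\nu$) or an $f$-contribution. Terms that are purely polynomial in $(z,\bar z)$ of total degree between $3$ and $2\mathbf N+1$ are collected into $\mathbf{R}_0$; those with exactly one $f$-factor and $(z,\bar z)$-degree between $2$ and $2\mathbf N$ into $\mathbf{R}_1$; those with exactly two $f$-factors into $\mathbf{R}_2$; and the remaining high-order tails (either with $(z,\bar z)$-degree exceeding $2\mathbf N+1$ or with three or more $f$-factors up to degree $L-1$, plus the integral remainder and $E_P(f)$) go into $\mathbf R_3$ and $\mathbf R_4$. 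The symbol-class membership $g_{\mu\nu}=\mathcal R^{0,0}_{k,m}$, $G_{\mu\nu}=\mathbf{S}^{0,0}_{k,m}$, and the symmetries $g_{\nu\mu}=\overline g_{\mu\nu}$, $G_{\nu\mu}=-\overline G_{\mu\nu}$, follow from the corresponding properties of the $B_d$'s in Lemma \ref{lem:back} and the reality of $K$. The condition $B_2(q(\omega_1),0,0)=0$, which gives $\mathbf{B}_2(q(\omega_1),0)=0$ in $\mathbf{R}_2$, is inherited from Lemma \ref{lem:back}.

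The main obstacle will be bookkeeping: carefully tracking where each term lands after substitution and Taylor expansion, and verifying in each case the precise symbol-class bound, particularly for the mixed terms generated by expanding $\mathcal{R}^{2,0}_{k,m}(Q,Q(r))$ around $Q(f)$ and by the $(P(\omega)r)^d$ versus $r^d$ distinction in the original expansion. The threshold $2\mathbf N+1$ for $\mathbf R_0$ and $2\mathbf N$ for $\mathbf R_1$ is chosen precisely so that the subsequent Birkhoff normal forms procedure can eliminate all non-resonant monomials while the residual higher-order terms in $\mathbf R_3$ are small enough to be absorbed into error estimates; verifying that this cutoff is compatible with the resonance condition in hypothesis (H12) is where the truncation has to be done with care, but it is otherwise bookkeeping. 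Everything else is routine multilinear algebra given the structure already established in Lemmas \ref{lem:back} and \ref{lem:back11}.
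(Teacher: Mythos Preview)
Your approach is correct and is precisely the natural one: substitute the spectral decomposition \eqref{eq:decomp2} into the expansion of Lemma \ref{lem:back11} and reorganize by degree in $(z,\bar z,f)$. The paper does not give its own proof of this lemma; it simply quotes Lemma 5.4 of \cite{Cu0} and marks the statement with \qed, so there is nothing to compare beyond noting that your outline matches what that reference carries out.
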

\qed

Now we start discussing about the normal forms argument.  It will consist in
eliminating as many terms as possible  from $\mathbf{R}_{j}$ with $j=-1,0,1$.

\noindent We set, for $\mathbf{n}$  the number associated to $\omega _1$ in (H11),
\begin{equation}\label{eq:eqvee}
    \mathbf{e} =( \mathbf{e}_1,...,\mathbf{e}_{\mathbf{n}}).
\end{equation}
Some of the monomials in $\mathbf{R}_{j}$ with $j= 0,1$  cannot be eliminated
because they are \textit{resonant}, that is of the following type.

\begin{definition}[Normal Forms]
\label{def:normal form} A function $Z(  z,  \varrho , f )$ is in normal form if  $
  Z=Z_0+Z_1
$
where $Z_0$ and $Z_1$ are finite sums of the following type:
\begin{equation}
\label{e.12a}Z_1=   \im  \sum _{\mathbf{e}      \cdot(\mu-\nu)\in \sigma _e(\mathcal{L} _{\omega _1 })}
z^\mu \overline{z}^\nu \langle  J  G_{\mu \nu}(   \varrho   ),f\rangle,
\end{equation}
where  $ G_{\mu \nu} =\mathbf{S}^{0,0}_{k,m}(\varrho)$
 for  fixed $k,m\in  \mathbb{N}$;
\begin{equation}
\label{e.12c}Z_0= \sum _{  \mathbf{e} (\omega _1)  \cdot(\mu-\nu)=0} g_{\mu   \nu}
(   \varrho  )z^\mu \overline{z}^\nu,
\end{equation}
and $g_{\mu   \nu} =  \resto ^{0,0}_{\infty ,m}(\varrho)$.
We assume furthermore that  $Z_0$ and $Z_1$ are real valued for $f\in L^2(\R ^2, \R ^2)$, and hence
    $\overline{g}_{\mu \nu}=g_{\nu\mu  }$ and $\overline{G}_{\mu \nu}= -G_{\nu \mu }$.
 \end{definition}

With an appropriate canonical change  of coordinates (that is, it preserves the r.h.s. of \eqref{eq:decompOm})   the term $\mathbf{R}_{-1 } $ and all non resonant terms in $\mathbf{R}_{0 } $ and $\mathbf{R}_{1 } $   cancel
out. Indeed we have the following fact, which we quote from Theorem 6.4 \cite{Cu0}.
   \begin{proposition} [Birkhoff normal forms]
  \label{prop:Bir1} There is a canonical transformation $ (z,f)  \stackrel{\mathcal{F} }{\rightarrow}(z',f') $
  where  \begin{equation} \label{eq:Bir11}
    \begin{aligned} &
    z '=z + \mathcal{R}^{0,2}( Q ,z,Q(f),  f)   \, , \\&
    f  =e^{J\mathcal{R}^{0,2}( Q ,z,Q(f), f) } ( f+  \mathbf{{S}}^{1,1}( Q ,z,Q(f ),  f) ).
\end{aligned} \end{equation}
   such that in the new coordinates $(z,f)$ we have
\begin{equation}  \label{eq:Bir12} \begin{aligned} &  {K}(Q,z,f)  = \resto ^{2,0} _{k,m} (Q, Q (f) )+H_2 '  +\textbf R_{0}+\textbf R_{1}+ \resto  \text{ with }\\&
 \resto
=\resto ^{1,2}_{k,m}(Q ,Q (f), f) +\sum _{j=2}^{4}
\textbf{R} _j  + \widehat{\textbf{R}} _2(  Q ,  z     ,\varrho , f), \end{aligned} \end{equation}
with $H_2 ' $ and  $\textbf{R} _j$   like in Lemma  and where we have:

\begin{itemize}
\item[(1)]  the term   $\textbf{R} _{-1}   $   in   \eqref{eq:ExpH11} is here $\textbf{R} _{-1}   =0$;

 \item[(2)]  all the nonzero terms in $\textbf{R} _0   $ with $|\mu +\nu |\le 2 \mathbf{N} +1 $  are in normal form,
that is $ \mathbf{e} \cdot (\mu -\nu )=0$, and are in $Z_0$;

\item[(3)] all the nonzero terms in $\textbf{R} _1  $ with $|\mu +\nu |\le 2 \mathbf{N}$  are in normal form,
that is $ \mathbf{e} \cdot (\mu -\nu )\in \sigma _e(\mathcal{H} _{p_0})$, and are in $Z_1$;

 \item[(4)]  we have   $\widehat{\textbf{R}} _2
 \in C^{m} ( \mathbb{{U}} ,\C
 )$   for  $\mathbb{{U}}\subset  \R \times   \C ^{\mathbf{n}} \times \R \times  P_c  \Sigma_{-k} $
  a neighborhood of $(q(\omega _1),   0 ,0,0)$ and
\begin{equation*} \begin{aligned} &    | \widehat{\textbf{R}} _2 (Q   ,z ,f, \varrho
)|  \le C (|Q - q(\omega _1) |+ |z|+  \| f \| _{\Sigma _{-k}}) \| f \| _{\Sigma
_{-k}}^2.  \end{aligned}\end{equation*}
\end{itemize}
\end{proposition}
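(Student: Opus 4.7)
The plan is to construct a finite sequence of canonical (with respect to $\Omega_0$) transformations, each generated as the time--one flow of a carefully chosen small Hamiltonian $\chi$, whose composition eliminates all non-resonant low-order monomials from $\textbf{R}_{-1}$, $\textbf{R}_0$ and $\textbf{R}_1$. Because after the Darboux step the symplectic form is already $\Omega_0$, the flow $\Phi^t_\chi$ of $X_\chi$ is automatically canonical, and the Lie--series formula
\begin{equation*}
K\circ \Phi^t_\chi = K + t\{K,\chi\} + \tfrac{t^2}{2}\{\{K,\chi\},\chi\} + \cdots
\end{equation*}
reduces each elimination step to an algebraic homological problem: given a target monomial $M$, solve $\{H_2',\chi\}=-M$ so that the first--order correction cancels $M$, and push the remaining errors into higher order.

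Concretely, for a pure polynomial monomial $M=g(Q,Q(f))\,z^\mu\bar z^\nu$ with $\mathbf{e}\cdot(\mu-\nu)\neq 0$, I take $\chi=(\im\,\mathbf{e}\cdot(\mu-\nu))^{-1}\,g\,z^\mu\bar z^\nu$; the small denominator is controlled by the Diophantine--type assumption (H12). For an $f$-linear monomial $M=z^\mu\bar z^\nu\langle JG_{\mu\nu},f\rangle$ with $\mathbf{e}\cdot(\mu-\nu)\notin \sigma_e(\mathcal{L}_{\omega_1})$, I take $\chi=z^\mu\bar z^\nu\langle JF_{\mu\nu},f\rangle$, where $F_{\mu\nu}\in \mathcal{S}(\R^2,\C^2)$ is obtained by solving the matrix resolvent equation $(\mathcal{L}_{\omega_1}^{\ast}-\im\,\mathbf{e}\cdot(\mu-\nu))F_{\mu\nu}=G_{\mu\nu}$ on $L^2_c(\omega_1)$; this is solvable in Schwartz class by (H9)--(H11), since the relevant spectral point lies in the resolvent set and stays bounded away from $\pm\im\omega_1$.

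The iteration proceeds by induction on $|\mu+\nu|$: I first remove the $\textbf{R}_{-1}$ terms (degrees $\le 2$), then all non-resonant monomials in $\textbf{R}_0$ up to degree $2\mathbf{N}+1$, and all non-resonant monomials in $\textbf{R}_1$ up to degree $2\mathbf{N}$. The thresholds are dictated by the definition $\mathbf{N}=\sup_j N_j$: beyond them the frequency $\mathbf{e}\cdot(\mu-\nu)$ reaches $\sigma_e(\mathcal{L}_{\omega_1})$, the resolvent construction of $F_{\mu\nu}$ ceases to produce rapidly decaying functions, and the corresponding resonant monomials must be retained as the normal form $Z_0$ and $Z_1$. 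Each Lie step generates higher--order corrections via nested Poisson brackets and via derivatives landing on the $\varrho=Q(f)$ dependence; using the symbol calculus of Definitions \ref{def:scalSymb} and \ref{def:opSymb} and the structure of the expansion in Lemma \ref{lem:ExpH11}, these are absorbed into the residual classes $\mathcal{R}^{i,j}_{k,m}$ and $\mathbf{S}^{i,j}_{k,m}$, and ultimately into $\widehat{\textbf{R}}_2$. The structural form \eqref{eq:Bir11} of the total transformation follows because each generating $\chi$ is either cubic in $(z,\bar z)$ or linear in $f$ with a Schwartz--valued coefficient, so the composed Hamiltonian flow produces $z$--corrections of class $\mathcal{R}^{0,2}$ and $f$--corrections of the rotation-plus-$\mathbf{S}^{1,1}$ type.

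The main obstacle I expect is the regularity analysis for $F_{\mu\nu}$. Since $\mathcal{L}_{\omega_1}^{\ast}$ is a non-self-adjoint matrix Schr\"odinger operator, smoothness and decay of $(\mathcal{L}_{\omega_1}^{\ast}-\im\mu)^{-1}P_c$ for $\mu$ in the spectral gap $(-\omega_1,\omega_1)$ is not automatic; one must use (H9) to exclude eigenvalues embedded in the gap only at its endpoints, and (H10) to rule out resonances at $\pm\im\omega_1$, so that the resolvent remains bounded as $|\mu|\to \omega_1^-$ and maps $\mathcal{S}$ to $\mathcal{S}$. A secondary bookkeeping task is tracking the symbol-class indices through the composition to verify both the claimed form \eqref{eq:Bir11} of the transformation and the cubic bound on $\widehat{\textbf{R}}_2$; these are carried out in detail in \cite{Cu0} and transfer directly to the present setting.
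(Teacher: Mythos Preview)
Your proposal is correct and follows essentially the same approach as the paper's sketch: both build the canonical transformation as a composition of time-one Hamiltonian flows whose generators solve the homological equation $\{H_2',\chi\}=-M$, eliminating non-resonant monomials degree by degree and retaining the resonant ones as $Z_0,Z_1$. One small correction to your closing paragraph: the $f$-linear homological equation is inverted on $L^2_c(\omega_1)$ at the finitely many points $\mathbf{e}\cdot(\mu-\nu)$ lying strictly inside the spectral gap, so solvability and Schwartz regularity of $F_{\mu\nu}$ follow from (H12) together with standard resolvent bounds for points in the resolvent set, not from (H9)--(H10), which concern embedded eigenvalues and threshold resonances in the essential spectrum and play no role at this step.
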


\begin{proof}[Sketch of the proof]
Our canonical transformation will be generated by Hamiltonian functions of the following form:
\begin{align*}
\sum _{\substack{ |\mu +\nu |=m\\
\mathbf{e}  \cdot (\mu -\nu )\neq 0  }} A_{\mu,\nu} (Q ,\varrho )  z^\mu
\overline{z}^\nu  +  \sum _{\substack{|\mu +\nu |  = m-1\\\mathbf{e}      \cdot(\mu-\nu)\not\in \sigma _e(\mathcal{L} _{\omega _1 })}} z^\mu \overline{z}^\nu  \Omega(
  B_{\mu,\nu} (Q ,\varrho ), f).
\end{align*}
The Hamilton vector flow generated from this Hamiltonian vector field will be
\begin{align*}
z_j(s)&\sim z_j-s\im s_j \(\sum _{\substack{ |\mu +\nu |=m\\
\mathbf{e}  \cdot (\mu -\nu )\neq 0  }} \nu_j A_{\mu,\nu} (Q ,\varrho )  \frac{z^\mu
\overline{z}^\nu}{\bar z_j}  +  \sum _{\substack{|\mu +\nu |  = m-1\\\mathbf{e}      \cdot(\mu-\nu)\not\in \sigma _e(\mathcal{L} _{\omega _1 })}} \nu_j \frac{z^\mu \overline{z}^\nu}{\bar z_j}  \Omega(
 B_{\mu,\nu} (Q ,\varrho ),f )\),\\
f(s)&\sim f +  s \sum _{\substack{|\mu +\nu |  = m-1\\\mathbf{e}      \cdot(\mu-\nu)\not\in \sigma _e(\mathcal{L} _{\omega _1 })}} z^\mu \overline{z}^\nu   B_{\mu,\nu} (Q ,\varrho ).
\end{align*}
Thus,
\begin{align*}
 &-\sum _{j=1}^{\mathbf{n}}
s_j \lambda _j |z_j(1)|^2  + 2 ^{-1} \Omega (\mathcal{L}_{\omega
 _1}   f(1),     f(1)) \sim  -\sum _{j=1}^{\mathbf{n}}
s_j \lambda _j |z_j|^2  + 2 ^{-1} \Omega (\mathcal{L}_{\omega
 _1}   f,     f)\\&+\sum _{\substack{ |\mu +\nu |=m\\
\mathbf{e}  \cdot (\mu -\nu )\neq 0  }} \mathbf{e}  \cdot (\mu -\nu ) A_{\mu,\nu} (Q ,\varrho ) z^\mu
\overline{z}^\nu +\sum _{\substack{|\mu +\nu |  = m-1\\\mathbf{e}      \cdot(\mu-\nu)\not\in \sigma _e(\mathcal{L} _{\omega _1 })}}
z^\mu \overline{z}^\nu \Omega (\(\mathcal{L}_{\omega
 _1}  -\mathbf e\cdot (\mu-\nu)\)   B_{\mu,\nu} (Q ,\varrho ),     f).
\end{align*}
By the above, we can erase the nonresonant terms.
\end{proof}

 In  \eqref{eq:Bir12} the functional $K$ is written  in a form which is essentially the
 same of \eqref{model2}.  What follows in Sections \ref{sec:equations}--\ref{sec:pfprop}   is rather close to the classical discussion
 in \cite{BP2,SW3}. The difference between these papers and  \cite{Cu2,bambusicuccagna}
  lies in the fact that the latter two use in an essential form the Hamiltonian
  structure of the system to study higher order interactions between discrete and
  continuous modes. The ideas originate  from \cite{Cu3}.
  The method \cite{BP2,SW3} does not work well with higher order interactions and
  requires very stringent restrictions
  on the spectrum of the linearization, which are completely eased in  \cite{Cu2,bambusicuccagna}.

   As we have seen in the analysis of \eqref{model2} the 2nd power,
 the structure of the Fermi golden rule will be  easily   seen in the framework   provided by {Proposition}
  \ref{prop:Bir1}. However the informal analysis on \eqref{model2}  which we made in
  Section \ref{section:introduction}  has to be supplemented by a number of   estimates, especially for the variable $f$. So we will need to write the equation for $f$ and derive  some estimates.

\section{Equations}
\label{sec:equations}

In the new coordinates $(z,f)$ in Proposition \ref{prop:Bir1} our NLS continues to be of the form \eqref{eq:systf}. In particular  we have
\begin{equation} \label{eq:eqf1} \begin{aligned}              \dot f &=   J  \nabla _{f} \resto ^{2,0} _{k,m} (Q, Q (f) )+ J\nabla _{f}H'_2  (Q,z, Q (f),f )+ J\nabla _{f} Z_0(Q,z, Q (f)  ) \\& + J\nabla _{f} Z_1(Q,z, Q (f),f ) + J \nabla _{f} \resto  (Q,z, Q (f),f ),   \end{aligned}  \end{equation}
where \begin{equation} \label{eq:nablaH1} \begin{aligned} &
     \nabla _{f} ( \resto ^{2,0} _{k,m} (Q, Q (f) ) +  H'_2 +Z_0 )=  \mathcal{L}_{\omega _1
 }  f   + {A} '   J   f,
 \\&  {A}':= \partial  _{Q (f)} \resto ^{2,0} _{k,m} (Q, Q (f) )
     +\sum _{\substack{ |\mu +\nu |\ge 2\\
\textbf{e}  \cdot (\mu -\nu )=0}}
\partial _{Q (f)}  a_{\mu \nu} ( Q, Q  (f) )  z^\mu
\overline{z}^\nu,
		   \end{aligned}\end{equation}
and similarly  we split the 2nd line of \eqref{eq:eqf1}  into
 \begin{equation} \label{eq:nablaH12} \begin{aligned} &
       {A} ^{\prime\prime}     J   f  -  \im    \sum _{\mathbf{e}      \cdot(\mu-\nu)\in \sigma _e(\mathcal{L} _{\omega _1 })}
z^\mu \overline{z}^\nu       G_{\mu \nu}( Q, Q  (f)  )  +  J\nabla _{f} \resto  (Q,z, \varrho,f )_{| \varrho = Q (f)},
 \\&  {A} ^{\prime\prime} := \partial  _{Q (f)} [ Z_1 (Q,z, Q (f),f )
     +  \resto  (Q,z, Q (f) ,f ) ] .
		   \end{aligned}\end{equation}
So finally we write the equation of $f$ as
\begin{align}              \dot f &= \mathcal{L}_{\omega _1
 }  f  + A Jf -  \im \sum _{\mathbf{e}      \cdot(\mu-\nu)\in \sigma _e(\mathcal{L} _{\omega _1 })}
z^\mu \overline{z}^\nu       G_{\mu \nu}( Q, 0  )   + \mathfrak{R} \text{  where}\label{eq:eqf2} \\   A &=  {A} ^{\prime } + {A} ^{\prime\prime} \text{ and }\mathfrak{R} =  \nabla _{f} \resto  (Q,z, \varrho,f )_{| \varrho = Q (f)} -  \im \sum _{\mathbf{e}      \cdot(\mu-\nu)\in \sigma _e(\mathcal{L} _{\omega _1 })}
z^\mu \overline{z}^\nu    \left (   G_{\mu \nu}( Q, Q(f)  )-  G_{\mu \nu}( Q, 0  )\right ).   \nonumber \end{align}
We write the equations for $z$ as
\begin{equation}\label{eq:eqz1}
   s_j  \im \dot z_j =  \frac{\partial}{\partial \overline{z}_j} (H'_2+Z_0) + \im \sum _{\mathbf{e}      \cdot(\mu-\nu)\in \sigma _e(\mathcal{L} _{\omega _1 })}
\nu _j \frac{z^\mu \overline{z}^\nu}{\overline{z}_j} \langle  J  G_{\mu \nu}(    Q, Q(f) ),f\rangle + \frac{\partial}{\partial \overline{z}_j}\resto .
\end{equation}
We set, for $\mathbf{n}$  the number associated to $\omega _1$ in (H11),
\begin{equation}\label{eq:eqlambda}
   \lambda  =( \lambda_1 ,...,\lambda_{\mathbf{n}} ),
\end{equation}
where  the $\lambda_j= \lambda_j(\omega _1)$ are introduced in (H11).

\begin{proposition}\label{thm:mainbounds}  For $ \epsilon _0$ sufficiently small
  there exists a $C>$ s.t., given  a solution  $u (t) $  of the NLS  which satisfies
$  \sup _{t\ge 0}\inf _{\vartheta \in \R }  \| u (t) -e^{\im \vartheta}\phi _{\omega_{1} }\| _{H^1}<\epsilon <\epsilon _0$,
  for  $t\in I= [0,\infty )$  we have
\begin{align}
&   \|  f \| _{L^p_t(I,W^{ 1 ,q}_x)}  +  \|  f \| _{L^2_t(I,H^{ 1 ,-s}_x)}\le
  C \epsilon \text{ for all admissible pairs $(p,q)$,}
  \label{Strichartzradiation}
\\& \| z ^\mu \| _{L^2_t(I)}\le
  C \epsilon \text{ for all multi indices $\mu$
  with  $\lambda\cdot \mu >\omega _1  $,} \label{L^2discrete}\\& \| z _j  \|
  _{W ^{1,\infty} _t  (I )}\le
  C \epsilon \text{ for all   $j\in \{ 1, \dots ,  \mathbf{{n}}\}$, }\label{L^inftydiscrete}  \\&  \|  \omega    -  \omega _1  \| _{L_t^\infty  (I )}\le
  C \epsilon    .
  \label{eq:orbstab1}
\end{align}
Furthermore  \begin{align}
     \label{eq:asstab3}  \lim _{t\to +\infty}  z(t)=0 .
\end{align}
\end{proposition}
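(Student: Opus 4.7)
The plan is to run a standard continuation/bootstrap argument in the coordinate system provided by Proposition \ref{prop:Bir1}, following closely the template of \cite{Cu0,Cu2,CM1}. Define
\begin{equation*}
\Lambda(T):=\|f\|_{L^p_t([0,T],W^{1,q}_x)}+\|f\|_{L^2_t([0,T],H^{1,-s}_x)}+\sum_{\lambda\cdot\mu>\omega_1}\|z^\mu\|_{L^2_t([0,T])},
\end{equation*}
summed over a finite collection of admissible pairs $(p,q)$ and resonant multi-indices $\mu$. The bounds \eqref{L^inftydiscrete}--\eqref{eq:orbstab1} in $L^\infty_t$ are immediate from the hypothesis $\sup_t\inf_\vartheta\|u(t)-e^{J\vartheta}\phi_{\omega_1}\|_{H^1}<\epsilon$ together with Lemma \ref{lem:modulation}, Proposition \ref{prop:darboux} and Proposition \ref{prop:Bir1}, each of which perturbs the identity only by terms of type $\mathcal{R}^{0,2}$ or $\mathbf{S}^{1,1}$. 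The remaining task is to show $\Lambda(T)\le C\epsilon$ with $C$ independent of $T$; since $\Lambda$ is continuous and $\Lambda(0)=0$, it suffices to show that whenever $\Lambda(T)\le\delta$ is small we in fact have $\Lambda(T)\le C_0\epsilon+\tfrac12\Lambda(T)$.

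For the bounds on $f$, I would apply Duhamel to \eqref{eq:eqf2} and invoke the Strichartz and Kato-smoothing estimates for $e^{t\mathcal{L}_{\omega_1}}P_c(\omega_1)$ from \cite{CT}, valid under (H6)--(H10). The term $AJf$ is absorbed since $A=\mathcal{R}^{1,0}$, and $\mathfrak{R}$ is genuinely higher order. The only obstacle is the resonant source $\sum_{\mathbf{e}\cdot(\mu-\nu)\in\sigma_e}z^\mu\bar z^\nu G_{\mu\nu}(Q,0)$, which is not integrable in time. As in the heuristic discussion around \eqref{intstr-1}, I introduce the normal form ansatz
\begin{equation*}
f=Y(z,Q)+g,\qquad Y(z,Q):=-\sum_{\mathbf{e}\cdot(\mu-\nu)\in\sigma_e(\mathcal{L}_{\omega_1})}z^\mu\bar z^\nu\,R^+_{\mathcal{L}_{\omega_1}}\!\bigl(\mathbf{e}\cdot(\nu-\mu)\bigr)P_c(\omega_1)G_{\mu\nu}(Q,0),
\end{equation*}
so that the equation for $g$ has a source whose resonant content appears only as $\partial_t Y(z,Q)$, which after integration by parts is controlled by $\|z^\mu\|_{L^2_t}$, plus genuinely smoothing/Strichartz-absorbable remainders. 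The outcome is $\|g\|_{\mathrm{Str}\cap\mathrm{Kato}}\lesssim\epsilon+\Lambda(T)^2$, hence the first two terms of $\Lambda(T)$ are controlled.

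The crucial step is the Fermi golden rule estimate \eqref{L^2discrete}. Differentiating the Lyapunov quantity $\sum_j s_j\lambda_j|z_j|^2$ using \eqref{eq:eqz1}, substituting $f=Y(z,Q)+g$, and discarding nonresonant phases via the non-resonance hypothesis (H12), the leading contribution after integration in time reproduces \eqref{eq:heu2}: for each level $L>\omega_1$ the Plemelj formula together with a distorted Fourier transform yields the positive quadratic form
\begin{equation*}
\pi\sum_{L>\omega_1}L\int_0^t\bigl\langle G_L^{(0)}(z(s)),\,\delta(-\Delta-(L-\omega_1))G_L^{(0)}(z(s))\bigr\rangle\,ds,
\end{equation*}
which by the Fermi golden rule hypothesis (H13) is bounded below by $c\sum_{\lambda\cdot\mu=L}\|z^\mu\|^2_{L^2_t([0,T])}$. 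Since by \eqref{L^inftydiscrete} the left hand side $\sum_j s_j\lambda_j|z_j(t)|^2$ is uniformly $O(\epsilon^2)$ — despite its indefinite signature imposed by (H14) — the integral identity closes as $\sum_\mu\|z^\mu\|^2_{L^2}\lesssim\epsilon^2+\delta\Lambda(T)^2$, completing the bootstrap.

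Finally, for the decay \eqref{eq:asstab3}, the $L^2$-integrability \eqref{L^2discrete} combined with uniform boundedness of $\dot z$ (read off from \eqref{eq:eqz1} with the estimates above) gives $z^\mu(t)\to0$ for every resonant $\mu$; the non-resonance condition (H12) and the existence of resonant multi-indices for every eigenvalue then propagate the decay to each individual $z_j$. The main structural obstacle is precisely the presence of positive signatures $s_j=+1$ forced by (H14): unlike the ground-state setting of \cite{Cu2}, the quadratic $\sum_j s_j\lambda_j|z_j|^2$ is not coercive, and the FGR identity cannot by itself control $|z(t)|$. It is only because the hypothesis of orbital closeness to the vortex supplies an a priori $L^\infty_t$ bound on $z$ that the identity can be inverted to yield decay — which is the structural reason why the conclusion of this proposition, and hence of Theorem \ref{th:asstab}, is \emph{conditional} asymptotic stability rather than unconditional.
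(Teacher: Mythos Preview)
Your proposal is correct and follows essentially the same route as the paper: reduce to a bootstrap on a finite interval (the paper's Proposition \ref{prop:mainbounds}), obtain the Strichartz/smoothing bounds on $f$ via \cite{CT} after the resolvent normal-form substitution $g=h+Y$ (Lemmas \ref{lem:conditional4.2}--\ref{lemma:bound g}), and close the $\|z^\mu\|_{L^2_t}$ estimate by differentiating $\sum_j s_j\lambda_j|z_j|^2$ and invoking (H13), using the orbital-closeness hypothesis to control the indefinite boundary term. The only step you leave implicit that the paper makes explicit is the auxiliary change $\zeta=z+O(z^2)$ (see \eqref{equation:FGR3}--\eqref{equation:FGR4}) used to isolate the resonant quadratic form cleanly before applying the FGR; otherwise the structure, and in particular your emphasis on why the argument yields only \emph{conditional} asymptotic stability, matches the paper exactly.
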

Notice that the case $(p,q)=(\infty , 2)$ in \eqref{Strichartzradiation} and inequalities \eqref{L^inftydiscrete}  and \eqref{eq:orbstab1} are an easy  consequence of
$  \sup _{t\ge 0}\inf _{\vartheta \in \R }  \| u (t) -e^{\im \vartheta}\phi _{\omega_{1} }\| _{H^1}<\epsilon .$

\textit{Proposition  \ref{thm:mainbounds}  implies Proposition  \ref{prop:asstab}}.
For the proof see \cite{CM1}, in particular Section 12.  First of all, by standard arguments  there is a   $f_+  \in H^1(\R ^2, \C^2)$
such that for the   $f$ in
\eqref{Strichartzradiation} and for the $A'$ in \eqref{eq:nablaH1}  we have
\begin{equation}\label{eq:scattering1}
 \begin{aligned} &    \lim _{t\to + \infty } \|   f(t) - e^{J  (\omega _1t +\int _0^t A'(s) ds)  }
  e^{ -J  t   \Delta    } f_+ \| _{H^1 }=0 ,
\end{aligned}
\end{equation}
 see Lemma 7.2  \cite{CM1}. We can express the solution $u(t) $ as  \begin{equation}\label{eq:compl0}
\begin{aligned}  &  u(t)=    e^{-J \vartheta (t) } (  \phi _{\omega(t) } +P(\omega (t))  (z'_j(t)\xi  _j+\overline{z'}_j(t)\overline{\xi}  _j +f'(t) ) \text{  where}  \\&  z '=z + \mathcal{R}^{0,2}_{k,m}( Q ,z,Q(f),  f)     \text{   and }
    f ' =e^{J\mathcal{R}^{0,2}_{k,m}( Q ,z,Q(f), f) } ( f+  \mathbf{{S}}^{1,1}_{k,m}( Q ,z,Q(f ),  f) )
 .      \end{aligned}
\end{equation}
where $(z,f)$ are the variables in Proposition  \ref{thm:mainbounds}  and $(k,m)$ are arbitrary. This follows from the fact that composing the change of variables
\eqref{eq:darb1} and \eqref{eq:Bir11}  yields a change of variables like in \eqref{eq:Bir11},
see \cite{Cu0}.

\noindent \eqref{eq:asstab3} and \eqref{eq:scattering1}   imply $\displaystyle \lim _{t   \to + \infty  }\mathbf{S}^{1,1}_{k,m} =0$ in $H^1$ and
$\displaystyle \lim _{t \to + \infty}\resto ^{0,2}_{k,m} =0$ in $\R  $.

 \noindent It is easy to see  that  when we plug \eqref{eq:compl0}  in   $\dot u =J\nabla \mathbf{E} (u)$
  we get
\begin{equation*}\label{eq:compl3}
\begin{aligned} &  \dot f=J(-\Delta  +\dot \vartheta - \dot  \resto ^{0,2}  ) f+G_1(u),
\end{aligned}
\end{equation*}
with $G_1(u)\in  C^0 (  H^1_x,   L^1_x)$.
 On the other hand,
$f$ satisfies also \eqref{eq:eqf1}, which is of the form
\begin{equation*} \label{eq:compl4} \begin{aligned} &              \dot f = J(-\Delta +\omega _1 +  A'   )
f+G_2(u)  ,    \end{aligned}  \end{equation*}
with  $G_2(u)\in  C^0 (  H^1_x,   L^1_x)$. Then   we have \begin{equation} \label{eq:compl5} \begin{aligned} &
     \kappa (u) f= G_1(u)
-G_2(u)  \text{  with }  \kappa (u) :=   \omega _1- \dot \vartheta + \dot  \resto ^{0,2} + A'   . \end{aligned}  \end{equation}
We have $  \kappa  \in C^0 (  H^1_x,   \R )$ and we claim that $\kappa =0$. Indeed, if
      $\kappa (u (t_0)) \neq 0$ for a given   solution, we can find solutions for which
$u_n (t,\cdot ) \in {\mathcal S}(\R ^2)$,  $ u_n (t_0,\cdot )\to u (t_0,\cdot )$ in $H^1(\R ^2)$,
$\|  u_n (t_0  ) \|  _{L^1(\R ^2)}\to \infty$,
$G_j( u_n (t_0  ) ) \to G_j( u  (t_0  ) )$
and $\kappa ( u_n (t_0  ) ) \to \kappa ( u  (t_0  ) )$.  This yields a contradiction
because on one hand $\|  u_n (t_0  ) \|  _{L^1(\R ^2)}\to \infty$ implies for the corresponding $f$ coordinates
$\|  f_n (t_0  ) \|  _{L^1(\R ^2)}\to \infty$, on the other hand \eqref{eq:compl5}
is telling us $ \|  f_n (t_0  ) \|  _{L^1(\R ^2)} \sim | \kappa (u (t_0))| ^{-1} \| G_1(u(t_0))
-G_2(u(t_0)) \|  _{L^1(\R ^2)}$ and so
$\|  f_n (t_0  ) \|  _{L^1(\R ^2)}\not \to \infty$.

  \noindent   Integrating  $ \omega _1- \dot \vartheta + \dot  \resto ^{0,2} + A'=0$   and by $\displaystyle \lim _{t \to + \infty}\resto ^{0,2}_{k,m} =0$ we get for a fixed $\vartheta _0\in \R $
\begin{equation*}
    \lim _{t\to +\infty} (  \omega _1 t+\int _0^t A'(s) ds- \vartheta (t)) = \vartheta _0.
\end{equation*}
Then \eqref{eq:scattering1}   for $h_+:= e^{ J  \vartheta _0  }f_+ $
 becomes
\begin{equation*}
 \begin{aligned} &    \lim _{t\to + \infty } \| e^{ -J  \vartheta (t)  }  f(t) -
  e^{ -J  t   \Delta    }  h_+ \| _{H^1 }=0 .
\end{aligned}
\end{equation*}
Using this in formula \eqref{eq:compl0}  we obtain
\begin{equation*}
 \begin{aligned} &    \lim _{t\to + \infty } \| u(t) -  e^{i \vartheta (t) }   \phi _{\omega(t) } - e^{ \im  t   \Delta    }   h_+ \| _{H^1 }=0
\end{aligned}
\end{equation*}
which yields \eqref{scattering}.   Combining  \eqref{eq:p0}  and \eqref{eq:compl0}
wee have
\begin{equation*}
 q(\omega (t) ) + Q(f) +\resto ^{0,2}_{k,m} =q(\omega _0).
  \end{equation*}
  Then
  \begin{equation*}
  \lim _{t\to + \infty } q(\omega (t) )  =q(\omega _0)  -Q(h_+) .
  \end{equation*}
Then (H5) implies that there must be a $\omega _+$ s.t. $\displaystyle \lim _{t\to + \infty }  \omega (t)   =\omega _+$.  The last sentence of {Proposition} \ref{prop:asstab}
follows from \eqref{eq:compl0} and the estimates in  Proposition   \ref{thm:mainbounds}.

\qed

By a standard continuity argument, Proposition  \ref{thm:mainbounds} is  a consequence of the following proposition.

\begin{proposition}\label{prop:mainbounds}
There exists $\epsilon _0>$ and a  constant $c_0>0$ such that   if $T>0$ and
if
  $u (t) $ is a solution  of the NLS  which satisfies
$  \sup _{t\in I}\inf _{\vartheta \in \R }  \| u (t) -e^{\im \vartheta}\phi _{\omega_{1} }\| _{H^1}<\epsilon <\epsilon _0$  where $I= [0,T] $ then,  if  the inequalities  \eqref{Strichartzradiation}--\eqref{L^2discrete}
hold  for this $I $
and for $C=C_0\ge c_0$,  they hold  also   $C=C_0/2$.
\end{proposition}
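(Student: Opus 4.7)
The plan is a standard bootstrap continuation argument built around four ingredients: dispersive estimates for the restriction of $e^{t\mathcal{L}_{\omega_1}}$ to $L^2_c(\omega_1)$, a normal form substitution for $f$ that absorbs resonant source terms, the Fermi Golden Rule identity, and the orbital smallness hypothesis. Throughout I assume \eqref{Strichartzradiation}--\eqref{L^2discrete} hold on $I$ with constant $C_0$, and aim to improve the constant to $C_0/2$ for $\epsilon_0$ small and $C_0\ge c_0$ chosen large.

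First, I would treat \eqref{eq:eqf2} by Duhamel against $e^{t\mathcal{L}_{\omega_1}}P_c(\omega_1)$, invoking the Strichartz and Kato smoothing estimates of \cite{CT} that rest on (H6)--(H10). The term $AJf$ is an $O(\epsilon^2)$ perturbation by \eqref{L^inftydiscrete}, \eqref{eq:orbstab1}, and the structure of $A',A''$, so it is absorbed into the linear side. The remainder $\mathfrak{R}$ is higher order in $(z,f,Q-q(\omega_1))$ by Lemma \ref{lem:back11} and Proposition \ref{prop:Bir1}, and is controlled through the bootstrap bounds.

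Second, the resonant source $-\im\sum_{\mathbf{e}\cdot(\mu-\nu)\in\sigma_e(\mathcal{L}_{\omega_1})} z^\mu\bar z^\nu G_{\mu\nu}(Q,0)$ in \eqref{eq:eqf2} is not in $L^1_tL^2_x$ and cannot be handled by plain Strichartz. Mimicking \eqref{intstr-1} I would set
\begin{equation*}
f=\tilde f+g,\qquad \tilde f=\sum_{\mathbf{e}\cdot(\mu-\nu)\in\sigma_e(\mathcal{L}_{\omega_1})} z^\mu\bar z^\nu\,R^+_{\mathcal{L}_{\omega_1}}\!\bigl(-\mathbf{e}\cdot(\nu-\mu)\bigr)\,G_{\mu\nu}(Q,0),
\end{equation*}
where the limiting resolvent is well-defined by (H9)--(H10). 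Then $g$ satisfies an equation whose forcing is either higher order in the discrete coordinates (hence bounded in $L^2_tL^2_x^{\mathrm{loc}}$ by bootstrap) or genuinely smoothing, giving $\|g\|_{L^p_tW^{1,q}_x\cap L^2_tH^{1,-s}_x}\lesssim \epsilon^2$. The contribution of $\tilde f$ to the Strichartz and smoothing norms is reduced to $L^2_t$ norms of resonant monomials $z^\mu$, which are controlled by the bootstrap bound \eqref{L^2discrete}. Combining the two pieces and choosing $\epsilon_0$ small improves \eqref{Strichartzradiation} to $C_0/2$.

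Third, to improve \eqref{L^2discrete} I would compute $\tfrac{d}{dt}\sum_j s_j\lambda_j|z_j|^2$ using \eqref{eq:eqz1}, substitute $f=\tilde f+g$, and integrate on $[0,t]$. Terms in the double sum with $\mathbf{e}\cdot(\mu-\nu)\neq\mathbf{e}\cdot(\alpha-\beta)$ are oscillatory and can be removed by a further near-identity Poincaré--Dulac change of coordinates (using (H12)), while non-oscillatory terms combine into the principal quadratic form
\begin{equation*}
\pi\sum_{L>\omega_1} L\int_0^t\bigl\langle \overline{(G_L^{(0)}(z))_2},\,\delta(-\Delta-(L-\omega_1))\,(G_L^{(0)}(z))_2\bigr\rangle\,ds,
\end{equation*}
exactly as in \eqref{eq:heu2}. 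Error contributions coming from $g$, from $G_{\mu\nu}(Q,Q(f))-G_{\mu\nu}(Q,0)$, and from $\omega$ versus $\omega_1$ discrepancies are of size $o(C_0^2\epsilon^2)$ under the bootstrap. The orbital smallness hypothesis ensures $\sum_j|z_j(t)|^2\lesssim\epsilon^2$ uniformly on $I$ (this is where (H14) and the sign indefiniteness of $s_j$ enter: smallness cannot be read off from a conservation law but must be extracted from the $H^1$ closeness of $u(t)$ to the orbit of $\phi_{\omega_1}$ via the coordinate expression \eqref{eq:compl0}). Hypothesis (H13) then gives coercivity of the integral in terms of $\|z^\mu\|_{L^2(I)}^2$ for resonant $\mu$, and the resulting inequality $\|z^\mu\|_{L^2(I)}^2\lesssim \epsilon^2 + C_0^3\epsilon^3$ improves \eqref{L^2discrete} to $C_0/2$ once $\epsilon_0$ is sufficiently small.

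The main obstacle is precisely the sign indefiniteness of $\sum_j s_j\lambda_j|z_j|^2$ caused by the negative Krein signatures allowed by (H14): unlike in the ground state case of \cite{Cu2}, this quantity is not automatically small, and one must keep track of the orbital hypothesis carefully to close the bootstrap on the $z$ variables. The remaining work is technical but entirely parallel to \cite{Cu0,Cu2,CM1}, so I would cite those sources for the detailed estimates on $\mathfrak R$, the normal form transformation, and the absorption of error terms.
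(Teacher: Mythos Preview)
Your proposal is correct and follows essentially the same route as the paper's own proof in Section~\ref{sec:pfprop}: Duhamel plus the Strichartz/smoothing estimates of \cite{CT} for the $f$ bound (Lemma~\ref{lem:conditional4.2}), the resolvent substitution $g=h+Y$ of \eqref{eq:g variable} to strip the resonant forcing (Lemma~\ref{lemma:bound g}), a near-identity change $\zeta=z+O(z^2)$ to remove the oscillatory cross terms, and then the integrated FGR identity \eqref{eq:FGRfgr} closed via (H13) and the orbital smallness hypothesis to compensate for the indefinite Krein signatures. The only differences are cosmetic: the paper passes to the conjugated operator $\mathcal{K}_{\omega_1}=M^{-1}\im\mathcal{L}_{\omega_1}M$ before introducing the resolvent correction, and records the final improvement as $C=c(1+\sqrt{C_0})$ rather than $C_0/2$, but these amount to the same bootstrap.
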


\section{Proof of Proposition \ref{prop:mainbounds} }
\label{sec:pfprop}

The proof of Proposition \ref{prop:mainbounds} is basically the same
of Proposition 6.7 in \cite{CM1}. We give a schematic description
of the main steps. The first is the following, which follows from theory
in \cite{CT} and whose proof we review in Appendix \ref{app:4.2}.

\begin{lemma}\label{lem:conditional4.2} Assume
the hypotheses of Prop. \ref{prop:mainbounds}.    Then  there is a fixed $c$ and an $s_0$
such that for all admissible pairs $(p,q)$ and all $s>s_0$
\begin{equation}
  \|  f \| _{L^p_t([0,T],W^{ 1 ,q}_x)}+ \|  f \| _{L^2_t([0,T],H^{ 1 ,-s}_x)}\le
  c _s \epsilon  + c_s   \sum _{\lambda\cdot \mu >\omega _1 }| z ^\mu  | ^2_{L^2_t( 0,T  )},
  \label{4.5}
\end{equation}
where we sum only on multiindices such that $\lambda\cdot \mu -
\lambda  _j <\omega _1$ for any  $j$ such that
for the $j$--th component of $\mu $ we have   $\mu _j\neq 0$.
\end{lemma}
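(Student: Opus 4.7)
The plan is to apply Duhamel's formula to equation \eqref{eq:eqf2} together with the linear dispersive theory for $e^{t\mathcal{L}_{\omega_1}}P_c(\omega_1)$ developed in \cite{CT} under (H6)--(H10). Since $f(t)\in L^2_c(\omega_1)$ by construction, the relevant Strichartz estimates on admissible pairs $(p,q)$ and the Kato-type weighted $L^2_t H^{1,-s}_x$ smoothing estimate apply directly to the corresponding Duhamel integral. The contribution from the data is $\|f(0)\|_{H^1}\lesssim \epsilon$, via Lemma \ref{lem:modulation} and the orbital-closeness hypothesis $\sup_{t}\inf_\vartheta\|u(t)-e^{\im\vartheta}\phi_{\omega_1}\|_{H^1}<\epsilon$.

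Next I would split the source of \eqref{eq:eqf2} into three blocks and estimate each separately. The multiplier $AJf$ has $A = A'+A''$ of type $\resto^{1,0}_{k,m}$, so $A=O(|z|^2+Q(f)+|Q-q(\omega_1)|)=O(\epsilon^2)$ uniformly in $t$; consequently $AJf$ contributes at most $C\epsilon^2\|f\|_{\mathrm{Str}\cap L^2H^{1,-s}}$, which is absorbed on the left for $\epsilon_0$ small. The remainder $\mathfrak{R}$ consists of (i) $\nabla_f \resto$, a sum of terms of type $\resto^{1,2}_{k,m}$, $\textbf{R}_2,\ldots,\textbf{R}_4$ and $\widehat{\textbf{R}}_2$, each carrying at least two factors of $(|z|+\|f\|_{\Sigma_{-k}})$, hence of size $O(\epsilon^2)$ in the relevant dual norm, and (ii) the difference $z^\mu\overline{z}^\nu(G_{\mu\nu}(Q,Q(f))-G_{\mu\nu}(Q,0))$, which acquires an extra factor $Q(f)=O(\epsilon^2)$. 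Both are absorbed into the bootstrap.

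The main content is the analysis of the resonant forcing $-\im\sum z^\mu \overline{z}^\nu G_{\mu\nu}(Q,0)$. Each $G_{\mu\nu}(Q,0)$ is Schwartz with uniformly bounded weighted Sobolev norm, so by the $\mathcal{L}_{\omega_1}$ smoothing from \cite{CT}
\[
\Big\|\int_0^t e^{(t-s)\mathcal{L}_{\omega_1}} z^\mu \overline{z}^\nu G_{\mu\nu}(Q,0)\,ds\Big\|_{L^p_tW^{1,q}_x\cap L^2_tH^{1,-s}_x}\lesssim \|z^\mu\overline{z}^\nu\|_{L^2_t([0,T])}.
\]
The summation runs over $\mathbf{e}\cdot(\mu-\nu)\in\sigma_e(\mathcal{L}_{\omega_1})$, i.e.\ $|\lambda\cdot(\mu-\nu)|\ge \omega_1$, which by (H12) is the strict inequality. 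Up to the complex-conjugation symmetry of $\mathcal{L}_{\omega_1}$ one may assume $\lambda\cdot(\mu-\nu)>\omega_1$, and after cancelling common indices one may assume $\nu=0$ and $\lambda\cdot\mu>\omega_1$. Then I would reduce $z^\mu$ to a minimal factor: extract $\alpha\le \mu$ with $\lambda\cdot\alpha>\omega_1$ but $\lambda\cdot\alpha - \lambda_j<\omega_1$ for every $j$ in the support of $\alpha$, and write $z^\mu=z^\alpha\cdot z^{\mu-\alpha}$ with $\|z^{\mu-\alpha}\|_{L^\infty_t}\lesssim \epsilon^{|\mu-\alpha|}$. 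This yields $\|z^\mu\|_{L^2_t}\lesssim \epsilon\,\|z^\alpha\|_{L^2_t}$, giving precisely the right-hand side of \eqref{4.5}.

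The delicate step is the combinatorics of this minimality reduction together with its interaction with the symmetry $G_{\nu\mu}=-\overline{G}_{\mu\nu}$: one has to verify that every non-minimal resonant monomial can indeed be split off a uniformly bounded factor (so that only minimal monomials appear on the right of \eqref{4.5}), and simultaneously that the weighted $L^2_t L^{2,s}_x$ hypothesis required by the smoothing estimate in \cite{CT} is satisfied by each $G_{\mu\nu}(Q,0)$. Both are standard, but bookkeeping the finite tree of decompositions and checking that the Schwartz bounds on $G_{\mu\nu}$ survive the modulation-dependent $Q$-argument is the main obstacle.
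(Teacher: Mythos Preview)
Your overall strategy---Duhamel for \eqref{eq:eqf2} together with the Strichartz and Kato smoothing estimates from \cite{CT}, then splitting the source---matches the paper, and your handling of the resonant forcing $\sum z^\mu\overline z^\nu G_{\mu\nu}(Q,0)$ and of the remainder $\mathfrak R$ is correct in spirit. However, there is a genuine gap in your treatment of the term $AJf$.

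You assert that $AJf$ contributes at most $C\epsilon^2\|f\|_{\mathrm{Str}\cap L^2_tH^{1,-s}_x}$ and can then be absorbed on the left. But the inhomogeneous smoothing estimate (Lemma~\ref{lem:lem3.3}) requires the source in $L^2_tL^{2,+s}_x$, and the inhomogeneous Strichartz estimate (Lemma~\ref{lem:lem3.1}) requires a dual Strichartz norm $L^{p'}_tW^{1,q'}_x$. Neither of these is controlled by $\epsilon^2$ times a Strichartz norm of $f$ or by $\epsilon^2\|f\|_{L^2_tH^{1,-s}_x}$: the weight has the wrong sign, $(p',q')$ is not admissible, and a naive $L^1_tH^1_x$ bound would pick up a factor of $T$. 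Nor can you gauge the term away by $e^{\int A\,J}$, because $J$ does not commute with the matrix potential $\mathcal V_{\omega_1}$ inside $\mathcal L_{\omega_1}$.

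The paper cures this by passing to $h=M^{-1}f$ and $\mathcal K_{\omega_1}$ and replacing $\sigma_3$ by the spectral operator $P_+-P_-$ built from the wave operators of $\mathcal K_{\omega_1}$ (Lemmas~\ref{lem:prop1.2}--\ref{lem:lem3.5}). Since $P_\pm$ commute with $\mathcal K_{\omega_1}$, the piece $A(P_+-P_-)h$ is absorbed \emph{exactly} into a modified unitary propagator $\mathcal U(t,s)=e^{\im\int_s^tA\,(P_+-P_-)}$ commuting with $e^{-\im t\mathcal K_{\omega_1}}$; the discrepancy $A[\widetilde P_c\sigma_3-(P_+-P_-)]h$ is a smoothing operator applied to $h$ (Lemma~\ref{lem:lem3.5}) and therefore does land in $L^2_tL^{2,+s}_x$ with the $\epsilon^2$ gain. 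That is the missing idea; once you insert it, the rest of your argument goes through.
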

\qed

The notation is simpler if we change frame.  For $M$ defined below we have
$M^{-1}
 \im J M =\sigma _3 $, see \eqref{sigma3}, and so we have
 \begin{align}   \label{eq:Homega1} &{\mathcal K}_{\omega  } :=M^{-1}
 \im {\mathcal L}_{\omega  } M  =
   \sigma _3  (-\Delta  +V+ \omega ) +  \sigma _3  M^{-1}\mathcal{V}_{\omega} M,
   \nonumber \\& \text{where }
 M:=
  \begin{pmatrix}   1  &
1  \\
-\im  &   \im
 \end{pmatrix}   \, , \quad   M^{-1} =\frac{1}{2}
  \begin{pmatrix}   1  &
\im   \\
1  &   -\im
 \end{pmatrix}   \, , \quad   \sigma _3=\begin{pmatrix} 1 & 0\\0 & -1 \end{pmatrix}
   .  \nonumber
\end{align}
 Then we set $ h=M^{-1}f$, which satisfies  for $ \mathbf{G}_{\mu \nu} :=M^{-1} {G}_{\mu \nu}  (Q,0)$  and $\mathbf{E}:=M^{-1} \mathfrak{R}$,
\begin{equation}\label{eq:eqh}
\begin{aligned}
  \im  \dot h &= \mathcal{K}_{\omega _1
 }  h  + A  \sigma _3h +  \sum _{\mathbf{e}      \cdot(\mu-\nu)\in \sigma _e(\mathcal{L} _{\omega _1 })}
z^\mu \overline{z}^\nu       \mathbf{G}_{\mu \nu}    +  \mathbf{E}  .
\end{aligned}
\end{equation}
with $A$   defined  in \eqref{eq:nablaH12}.  The last summation in \eqref{4.5}  originates from the  $z^\mu \overline{z}^\nu       \mathbf{G}_{\mu \nu} $ terms in   \eqref{eq:eqh} (or the corresponding ones in
the 1st line of \eqref{eq:eqf2}). We cancel these terms by a normal forms argument
For
\begin{equation}
  \label{eq:g variable}
g=h+ Y \,  , \quad Y:=\sum _{| \lambda  \cdot(\mu-\nu)|>\omega _1} z^\mu
\overline{z}^\nu
   R ^{+}_{\mathcal{K}_{\omega _1
 }  }  (\lambda  \cdot(\nu-\mu) )
     \textbf{G}_{\mu \nu},
\end{equation}
we have  \begin{equation} \label{eq:eq g1}  \begin{aligned} &
 \im \dot g =     \mathcal{K}_{\omega _1
 }    g   +A   \sigma _3g    + [  \im \dot Y -\mathcal{K}_{\omega _1
 }  Y ]+  \sum _{\mathbf{e}      \cdot(\mu-\nu)\in \sigma _e(\mathcal{L} _{\omega _1 })}
z^\mu \overline{z}^\nu       \mathbf{G}_{\mu \nu}  +A   \sigma _3Y +     \mathbf{E}.
\end{aligned}\end{equation}
We then compute
\begin{equation} \label{eq:eq g2}  \begin{aligned} &
   \im \dot Y = \sum _{| \lambda  \cdot(\mu-\nu)|>\omega _1}  \lambda  \cdot(\nu-\mu) z^\mu \overline{z}^\nu   R ^{+}_{\mathcal{K}_{\omega _1
 }  } ( \lambda  \cdot(\nu-\mu))
  \mathbf{G}_{\mu \nu} + \mathbf{T} \text{ where} \\&     \textbf{T} :=\sum _j \left [\partial _{z_j}Y (\im \dot z_j+ \lambda_jz_j)+\partial _{\overline{z}_j}Y (\im \dot {\overline{z}}_j- \lambda_j\overline{z}_j)
 \right ] .
\end{aligned}\end{equation}
Inserting \eqref{eq:eq g2}  in  \eqref{eq:eq g1} we obtain
\begin{equation} \label{eq:eq g3}  \begin{aligned} &
 \im \dot g =     \mathcal{K}_{\omega _1
 }    g   +A   \sigma _3g      +A   \sigma _3Y + \mathbf{T} +     \mathbf{E}.
\end{aligned}\end{equation}
So we have canceled the $z^\mu \overline{z}^\nu       \mathbf{G}_{\mu \nu} $ terms.
Notice that $\mathbf{T} $ contains terms of this type  but  by \eqref{eq:eqz1} they are smaller.
So $g$ is smaller than $h$. In fact   the following is true,  and is proved   in Lemma 4.6 in \cite{CT}  (the statement in   Lemma 4.6 \cite{CT} has a systematic typo and $L^2_t L ^{2,M}_x$ should be replaced by $L^2_t L ^{2,-M}_x$, where
$M$ there is like our $s$ here).

\begin{lemma}\label{lemma:bound g} Assume the hypotheses of Prop. \ref{prop:mainbounds}. Then   for fixed $s>1$ there  exist
a fixed $c$
such that if   $\varepsilon_0$ is sufficiently small, for any preassigned and large $L>1$  we have $\| g
\| _{L^2((0,T ), L^{2,-s}_x)}\le   c    \epsilon  $.
\end{lemma}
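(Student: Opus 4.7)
The plan is to apply to \eqref{eq:eq g3} a Kato--type smoothing estimate for the group $e^{-\im t \mathcal{K}_{\omega_1}}$ restricted to its continuous spectrum component, along the lines of the dispersive theory from \cite{CT}, and then estimate each of the forcing terms $A\sigma_3 g$, $A\sigma_3 Y$, $\mathbf{T}$, $\mathbf{E}$ in a dual weighted norm $L^2_t L^{2,s}_x$. Writing $g = P_c(\omega_1) g + (1-P_c(\omega_1)) g$, the discrete part can be read off from the modulation/normal form construction as a quantity controlled by $|z|^2 + \epsilon$, while for the continuous part we use
\begin{equation*}
\| P_c(\omega_1) e^{-\im t\mathcal{K}_{\omega_1}} u_0 \|_{L^2_t L^{2,-s}_x} \lesssim \| u_0 \|_{L^2_x}, \qquad
\Bigl\| \int_0^t P_c(\omega_1) e^{-\im (t-\tau)\mathcal{K}_{\omega_1}} F(\tau)\, d\tau \Bigr\|_{L^2_t L^{2,-s}_x} \lesssim \|F\|_{L^2_t L^{2,s}_x}
\end{equation*}
which hold for $s>1$ under (H6)--(H10).

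The first step would be to show that $A\sigma_3 g$ is an absorbable perturbation. Since $A = A' + A''$ and both $A', A''$ are at least linear in the small quantities $|Q-q(\omega_1)|+|z|^2+Q(f)$ by the formulas \eqref{eq:nablaH1}--\eqref{eq:nablaH12}, one has $\|A\|_{L^\infty_t}\lesssim \epsilon$ and, using the bootstrap bounds \eqref{L^inftydiscrete}--\eqref{eq:orbstab1}, the contribution $\|A\sigma_3 g\|_{L^2_t L^{2,s}_x}$ can be bounded by $C\epsilon \|g\|_{L^2_t L^{2,-s}_x}$ (after using that the multiplication by $A$ gains arbitrary polynomial decay coming from the Schwartz character of the coefficient functions attached to the $z$--dependent pieces, and after pairing with the weight). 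This contribution is absorbed on the left for $\epsilon_0$ small.

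For $A\sigma_3 Y$, note that by \eqref{eq:g variable} the function $Y$ is a finite sum of $z^\mu \bar z^\nu$ times Schwartz functions $R^+_{\mathcal{K}_{\omega_1}}(\lambda\cdot(\nu-\mu))\mathbf{G}_{\mu\nu}$; these resolvents applied to Schwartz data with spectral parameter in the essential spectrum remain in $L^{2,s}$ by (H10) and the limiting absorption principle. Thus $\|Y\|_{L^{2,s}_x}$ is bounded by a finite sum of $|z^\mu\bar z^\nu|$, and combined with $\|A\|_{L^\infty_t}\lesssim \epsilon$ and \eqref{L^inftydiscrete}, the term $\|A\sigma_3 Y\|_{L^2_t L^{2,s}_x}$ is controlled by $\epsilon$ times a sum of $\|z^\mu\|_{L^2_t}$, which by \eqref{L^2discrete} is $O(\epsilon^2)$. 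The remainder $\mathbf{E}$ splits into $\nabla_f \mathcal{R}$ and the difference $\mathbf{G}_{\mu\nu}(Q,Q(f))-\mathbf{G}_{\mu\nu}(Q,0)$; by Lemma \ref{lem:ExpH11} and Proposition \ref{prop:Bir1} each piece is of higher order in $(z,f)$ and its $L^2_t L^{2,s}_x$ norm is estimated routinely by $C\epsilon^2$ using \eqref{Strichartzradiation}--\eqref{L^2discrete}.

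The main obstacle, and the step that requires care, is $\mathbf{T}$ in \eqref{eq:eq g2}. Here one must exploit the fact that from \eqref{eq:eqz1} and the form of $H_2'+Z_0$ (see \eqref{eq:ExpH2}) one has $s_j \im \dot z_j + s_j \lambda_j z_j = \mathcal{R}^{1,0}_{k,m}\cdot z + (\text{cubic and higher in } (z,f))$, so that $\im \dot z_j \pm \lambda_j z_j$ is at least quadratic in $(z,f)$. Since $\partial_{z_j}Y$ is again a polynomial in $(z,\bar z)$ with Schwartz--valued coefficients (of one degree less than $Y$), the product $\partial_{z_j}Y\cdot(\im\dot z_j+\lambda_j z_j)$ is a finite sum of monomials $z^\alpha \bar z^\beta$ with Schwartz coefficients plus pairings of the form $\langle G, f\rangle z^\alpha \bar z^\beta$, each of which falls into one of the three categories: (i) $\lambda\cdot\alpha>\omega_1$, handled by \eqref{L^2discrete}; (ii) genuinely higher order so that $L^\infty_t$ bounds suffice; or (iii) a pairing with $f$ estimated in $L^2_t L^{2,-s}_x$ by \eqref{Strichartzradiation}. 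Summing all contributions and choosing $\epsilon_0$ small so that the $C\epsilon \|g\|_{L^2_t L^{2,-s}_x}$ term from $A\sigma_3 g$ can be absorbed yields the stated bound $\|g\|_{L^2_t L^{2,-s}_x}\le c\epsilon$.
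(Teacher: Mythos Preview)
Your overall strategy of applying the Kato smoothing estimate from Lemma \ref{lem:lem3.3} to \eqref{eq:eq g3} and then bounding each forcing term is the right framework, and your treatment of $A\sigma_3 Y$, $\mathbf{T}$ and $\mathbf{E}$ is essentially correct. However, there is a genuine gap in your handling of the term $A\sigma_3 g$.

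You assert that $\|A\sigma_3 g\|_{L^2_tL^{2,s}_x}\lesssim \epsilon\|g\|_{L^2_tL^{2,-s}_x}$ because ``multiplication by $A$ gains arbitrary polynomial decay coming from the Schwartz character of the coefficient functions''. This is not the case: by \eqref{eq:nablaH1}--\eqref{eq:nablaH12} the quantity $A=A'+A''$ is a \emph{scalar} function of $t$ only (note that \eqref{eq:nablaH2} is an $L^\infty_t$ bound of a real number), with no spatial localization whatsoever. Hence $A\sigma_3 g$ has exactly the same spatial decay as $g$, and placing it in $L^{2,s}_x$ would require controlling $g$ in $L^{2,s}_x$, which you do not have. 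The term cannot be absorbed by the smoothing inequality as written.

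The device actually used (see the proof of Lemma \ref{lem:conditional4.2} in Appendix \ref{app:4.2}, which follows \cite{CT}) is to split $\sigma_3$ on the continuous subspace as $\widetilde{P}_c\sigma_3=(P_+-P_-)+[\widetilde{P}_c\sigma_3-(P_+-P_-)]$ via Lemma \ref{lem:lem3.5}. The piece $A(P_+-P_-)g$ is removed exactly by conjugating with the bounded operator $\mathcal{U}(t,s)=e^{\im\int_s^tA(s')ds'(P_+-P_-)}$, which commutes with $e^{-\im t\mathcal{K}_{\omega_1}}$ and therefore leaves the smoothing estimates intact. The remaining piece $A[\widetilde{P}_c\sigma_3-(P_+-P_-)]g$ is harmless because $\widetilde{P}_c\sigma_3-(P_+-P_-)$ maps $L^{2,-s}_x\to L^{2,s}_x$ by Lemma \ref{lem:lem3.5}, so this term is indeed bounded by $\|A\|_{L^\infty_t}\|g\|_{L^2_tL^{2,-s}_x}\lesssim \epsilon^2\|g\|_{L^2_tL^{2,-s}_x}$ and can be absorbed. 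Without this splitting your absorption step does not close.
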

\qed

\noindent For $M^T $ the transpose of $M$, and using  $M^T=2 M ^{-1} $ , $f=Mh$  and $ \mathbf{G}_{\mu \nu} :=M^{-1} {G}_{\mu \nu}  (Q,0)$, by direct computation we have
\begin{equation*}
\begin{aligned}
& \im \langle  J  G_{\mu \nu}(    Q, 0 ),f\rangle = \im \langle  M^T  JM   M ^{-1} G_{\mu \nu}(    Q, 0 ),h\rangle =     2 \langle  \sigma _1   \sigma _3   \mathbf{{G}}_{\mu \nu} ,h\rangle .
\end{aligned}
\end{equation*}
Notice that $G_{\mu \nu}(    Q, 0 )=-\overline{G}_{\nu \mu}(    Q, 0 )$, see in Lemma
\ref{lem:ExpH11}, implies by $ \overline{M} ^{-1} =\sigma _1  {M} ^{-1}$
\begin{equation*}
\begin{aligned}
&    \overline{ \mathbf{{G}}}_{\nu \mu} = \overline{M} ^{-1} \overline{G}_{\nu \mu}(    Q, 0 )
=- \sigma _1  {M} ^{-1}{G}_{\mu \nu}(    Q, 0 ) =  - \sigma _1  { \mathbf{{G}}}_{\mu \nu}  .
\end{aligned}
\end{equation*}

Then substituting  \eqref{eq:g variable}  in \eqref{eq:eqz1} we obtain
\begin{equation}\label{eq:FGR0} \begin{aligned} & \im s_j \dot z _j
=
\partial _{\overline{z}_j}(H_2'+Z_0) +2   \sum  _{
 |\lambda  \cdot (\mu   -\nu )| > \omega
_1 }  \nu _j\frac{z ^\mu
 \overline{ {z }}^ { {\nu} } }{\overline{z}_j}  \langle g ,    \sigma _3   \overline{ \mathbf{{G}}}_{\nu \mu}\rangle       +
\partial _{  \overline{z} _j}  \resto  \\&    +2 \sum  _{ \substack{| \lambda   \cdot
(\alpha    -\beta )|> \omega _1
\\
 |\lambda  \cdot (\mu -\nu )|> \omega
_1 }}  \nu _j\frac{z ^{\mu +\alpha }  \overline{{z }}^ { {\nu}
+\beta}}{\overline{z}_j} \langle  R ^{+}_{\mathcal{K}_{\omega _1
 }  } ( \lambda  \cdot(\beta -\alpha))    \textbf{G}
_{\alpha \beta } ,  \sigma _3   \overline{ \mathbf{{G}}}_{\nu \mu} \rangle     ,
\end{aligned}  \end{equation}
Let us consider the set of  multi--indexes
\begin{equation}\label{multi}
     \mathcal{M}:= \{ \alpha :  \lambda
\cdot  \alpha     > \omega _1 \text{  and } \lambda
\cdot  \alpha -\lambda   _k   < \omega _1   \, \forall \, k \, \text{
s.t. } \alpha _k\neq 0 \} .
\end{equation}
Set also $\Lambda := \{  \lambda  \cdot  \alpha :  \alpha \in \mathcal{M} \}$.

\noindent Like in   \cite{CM1},  there is a  new set of variables
   $\zeta =z+O(z^2)$ s.t.     for a fixed $C$
   \begin{equation}  \label{equation:FGR3} \begin{aligned}   & \| \zeta  -
 z  \| _{L^2_t}
\le CC_0\epsilon ^2\, , \quad  \| \zeta  -
 z \| _{L^\infty _t} \le C \epsilon ^3 \quad \text{ and}
\end{aligned}
\end{equation}
\begin{equation} \label{equation:FGR4} \begin{aligned}
  s_j \im \dot \zeta
 _j&=
\partial _{\overline{\zeta}_j}H_2 '(\zeta , h ) +
\partial _{\overline{\zeta}_j}Z_0 (\zeta , h )+  \mathcal{D}_j
     \\&  +2 \sum  _{ \substack{ \lambda
\cdot  \alpha =\lambda \cdot   \nu
   \\  (\alpha , \nu )\in \mathcal{M}^2 }} \nu _j \frac{\zeta ^{
\alpha } \overline{ \zeta}^ { \nu }}{\overline{\zeta}_j}  \langle  R ^{+}_{\mathcal{K}_{\omega _1
 }  } ( -\lambda  \cdot  \alpha )  \textbf{G}
_{\alpha 0 } ,  \sigma _3   \overline{ \mathbf{{G}}}_{\nu 0} \rangle  ,
\end{aligned}
\end{equation}
  where    for a
fixed constant $c_0$   we have
\begin{equation}\label{eq:FGR7} \sum _{j=1}^{\textbf{n}}\| {\mathcal{D}}_j  \overline{\zeta} _j\|_{
L^1[0,T]}\le c_0 (1+C_0)   \epsilon ^{2}
 . \end{equation}
 Now we consider, like in \cite{CM1},
\begin{align} \label{eq:FGR5}
 &\partial _t \sum _{j=1}^{\textbf{n}} s_j \lambda  _j
 | \zeta _j|^2  =  2  \sum _{j=1}^{\textbf{n}}  \lambda _j\Im \left (
\mathcal{D}_j '\overline{\zeta} _j \right ) -\\&    -4 \sum  _{ \substack{ \lambda
\cdot  \alpha =\lambda \cdot   \nu
   \\  (\alpha , \nu )\in \mathcal{M}^2 }} \lambda  \cdot \nu
\Im  \left ( \zeta ^{ \alpha } \overline{\zeta }^ { \nu  } \langle
R ^{+}_{\mathcal{K}_{\omega _1
 }  } ( -\lambda  \cdot  \alpha )     \textbf{G}_{ \alpha 0} , \sigma _3   \overline{\textbf{ G} }  _{ \nu 0
}\rangle \right ) .\nonumber
\end{align}
In the second line of  \eqref{eq:FGR5} we have a sum
\begin{equation}   \label{eq:FGR8} \begin{aligned} &      \Gamma (\zeta ):=   -   4\sum _{L\in \Lambda     } L     \Im \left    \langle R_{ \mathcal{K}_{\omega _1}}^+ (-L  )
\textbf{G} (L , \zeta),
 \sigma _3 \overline{\textbf{G} (L , \zeta)        }\right \rangle ,   \text{ for } \\& \textbf{G} (L , \zeta)
:=\sum _{
\substack{ \lambda \cdot  \alpha  =L
   \\  \alpha \in \mathcal{M} }}\zeta ^{ \alpha }   \textbf{G}_{ \alpha 0} .
\end{aligned}
\end{equation}
For
   $W  =\lim_{t\to\infty}e^{-\im t \mathcal{K} _{\omega _1} }e^{\im t\sigma_3
(-\Delta +\omega _1 )}$, there exist  $ F ^{(L,\zeta)}  \in W^{k,p}(\R ^2, \C ^2)$
 for all $k\in \R$ and $p \in (1, \infty )$ with
 $ 2\textbf{G} (L , \zeta) =W F^{(L,\zeta)}  $, see   \cite{CT}. Then for
 $^t{ {F}}^{(L,\zeta)}=( {F}_1^{(L,\zeta)}, {F}_2^{(L,\zeta)})  $
 \begin{align}
 \Gamma (\zeta ) &= -4\sum _{L\in \Lambda     }\lim _{\varepsilon \searrow 0} \Im [   \langle R_{  -\Delta    }  (-L - \omega _1 +\im \varepsilon )
{F}_1^{(L,\zeta)},
   \overline{F}_1^{(L,\zeta)}          \rangle - \langle R_{   \Delta   }  ( \omega _1 -L  +\im \varepsilon )
{F}_2^{(L,\zeta)},
   \overline{F}_2^{(L,\zeta)}    \rangle   ]     \nonumber
\\&    =  4\sum _{L\in \Lambda     }\lim _{\varepsilon \searrow 0} \Im   \langle R_{   \Delta   }  ( \omega _1 -L  +\im \varepsilon )
{F}_2^{(L,\zeta)},
   \overline{F}_2^{(L,\zeta)}    \rangle  \label{eq:FGR81}\\&=
-4\sum _{L\in \Lambda     }\lim _{\varepsilon \searrow 0}
\int _{\R ^2} \frac{\varepsilon}{(x ^2- (\Lambda-\omega _1))^2+\varepsilon ^2  }  |\widehat{F}_2^{(L,\zeta)}(x )|^2 dx \le 0.\nonumber
\end{align}
Now we   assume:

\begin{itemize}
\item[(H13)]   for some fixed constant $\Gamma >0$  and for all $\zeta \in
\mathbb{C} ^{\mathbf{n}}$ we have \end{itemize}
\begin{equation} \label{eq:FGR} \begin{aligned} &  \Gamma (\zeta )
 <- \Gamma  \sum _{ \substack{ \alpha \in \mathcal{M}}}  | \zeta ^\alpha  | ^2 .
\end{aligned}
\end{equation}
Then integrating    and exploiting \eqref{equation:FGR3} we get for $t\in [0,T]$
\begin{equation}\label{eq:FGRfgr}  \Gamma \sum _{ \substack{ \text{$\alpha$ as in (H14)}}}  \|z ^\alpha \| _{L^2(0,t)}^2\le
c   C_0  \epsilon ^2 -\sum _j s_j \lambda _j  |z
_j(0)|^2 + \sum _j s_j \lambda _j  |z
_j(t)|^2 .
\end{equation}
We want to conclude  for some other fixed $c'$
\begin{equation}\label{eq:FGRfg1r}  \text{ l.h.s. of \eqref{eq:FGRfgr}}\le
3c '   C_0  \epsilon ^2 .
\end{equation}
 Since $  \sup _{0\le t \le T}\inf _{\vartheta \in \R }  \| u (t) -e^{\im \vartheta}\phi _{\omega_{1} }\| _{H^1}<\epsilon $ by hypothesis,  we can conclude that  $\sum _j   \lambda _j  |z
_j(t)|^2\le c' \epsilon ^2$ for any $t$. Then  \eqref{eq:FGRfgr} implies \eqref{eq:FGRfg1r}
  since here we can assume $C_0>1$ we get \eqref{eq:FGRfg1r}.

We conclude that for $\epsilon _0>0$  sufficiently small and any $T>0$,
\eqref{Strichartzradiation}--\eqref{L^2discrete} in  $I=[0,T]$ and with $C=C_0$  implies
\eqref{Strichartzradiation}--\eqref{L^2discrete} in  $I=[0,T]$  with $C=c(1+\sqrt{C_0}  )$ for $c$.
This yields Proposition  \ref{prop:mainbounds}.

 \qed

\begin{remark}
Notice that in the proof of asymptotic stability in \cite{Cu2}, where  $s_j=-1$  for all $j$, orbital stability is a consequence of  \eqref{eq:FGRfg1r} rather than the other way around. Indeed in that case we have
\begin{equation}\label{eqrem}  \sum _j   \lambda _j  |z
_j(t)|^2 +   \Gamma \sum _{ \substack{ \text{$\alpha$ as in (H14)}}}  \|z ^\alpha \| _{L^2(0,t)}^2\le
c   C_0  \epsilon ^2 +\sum _j   \lambda _j  |z
_j(0)|^2,
\end{equation}
and taking the initial datum $u_0$ very close to $ \phi _{\omega_{1} }$ we can assume   $\sum _j  \lambda _j  |z
_j(0)|^2 \le c' \epsilon ^2 $. Then each term in the l.h.s. of \eqref{eqrem}  is small for all $t>0$. Furthermore this and and the fact that from \eqref{eq:eqz1} we derive that the
time derivatives $\dot z_j (t)$ remain small, we conclude that  $z_j(t) \stackrel{t\to \infty}{\rightarrow} 0$  for all $j$.
\end{remark}

\section{   Theorem \ref{theorem-1.1} and cubic quintic equations}
\label{sec:applications}

We have not carried out numerical experiments to check examples
to which Theorem \ref{theorem-1.1} applies.   In this section we
combine numerical results in  \cite{pego} with  number of
assumptions to propose some possible applications of Theorem \ref{theorem-1.1}. We   discuss mainly  hypothesis (H14).

We consider the cubic--quintic NLS \eqref{NLS3-5}.
For each $m\geq 0$ one can find a family of vortices $e^{\im m \theta}\psi_{\omega}(r)$ for $\omega\in \mathcal O=(0,\omega_*)$ with $\psi_\omega\geq 0$ \cite{IaiaWarchall99}.
Here, the upper bound $\omega_*$ of $\mathcal O$ is given by $\omega_*:=\sup\{\omega>0\ |\ \exists s>0,\ \frac{\omega}{2}s^2-\frac{1}{4}s^4+\frac{1}{6}s^6<0\}$ which is in this case $\frac{3}{16}$.
Notice that if $\frac{\omega}{2}s^2-\frac{1}{4}s^4+\frac{1}{6}s^6\geq 0$ for all $s>0$, by Pohozaev identity we can show that there are no nontrivial bound states in the energy space.

As we have already discussed in Section \ref{section:introduction},
for each $m=1,2,3,4,5$ \cite{pego} shows that there is a critical value $ \omega_{cr}$ such that for $\omega \ge \omega_{cr}$ the vortices are spectrally
stable, while for $\omega < \omega_{cr}$ they are spectrally
unstable. For $m=1 $  the value is
$\omega_{cr}\approx 0.1487$, see Section 5.5  \cite{pego}. In all these cases  spectral instability is
generated as follows.
   As $\omega  $ approaches $  \omega_{cr}^+$  from above,    two distinct   eigenvalues on the imaginary axis
$\im \lambda ^{(1)}(\omega )$  and $\im \lambda ^{(2)}(\omega )$ coalesce  at $\omega = \omega_{cr}$ at a point $ \im \lambda   _{cr} $  ($ \im \lambda   _{cr} \approx \im 0.0478$  for $m=1$). As $\omega$ decreases further, two eigenvalues bifurcate from  $ \im \lambda   _{cr}  $  out of
the imaginary axis. In \cite{pego}  it is not stated explicitly whether or not
the eigenvalues $ \im \lambda ^{(j)}(\omega ) $  for $j=1,2$ are simple and whether their algebraic and geometric dimensions
    coincide.  The fact that only   eigenvalues with different signatures can generate
    by collapse  eigenvalues  outside $\im \R$ is well known, and we formalize it
    as follows.

In the following, $\mathcal L_\omega$ will denote the linearized operator of the vortices $e^{\im \omega t} e^{\im m \theta}\psi_\omega(r)$ of the cubic-quintic NLS \eqref{NLS3-5}.

   \begin{lemma} \label{lem:bif} Consider equation \eqref{NLS3-5},
   an $ m=1,2,3,4,5$ , a corresponding vortex and the operators $\mathcal{L}_{\omega}$.
   Suppose that there exists $\varepsilon _0>0$ s.t.
   for
   $\omega \in (\omega _{cr}, \omega _{cr} +\varepsilon _0)$
   the eigenvalues $ \im \lambda ^{(j)}(\omega ) $  for $j=1,2$ are simple and their algebraic and geometric dimensions
    coincide (and so are 1). Then one of them satisfies (H14).
\end{lemma}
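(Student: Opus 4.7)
The plan is to exploit the Krein signature obstruction to Hamiltonian--Hopf bifurcation. Fix $\omega \in (\omega_{cr},\omega_{cr}+\varepsilon_0)$ and let $\xi^{(j)}(\omega) \in \ker(\mathcal{L}_\omega - \im\lambda^{(j)}(\omega))$ be eigenfunctions normalized as in Lemma \ref{lem:basis}, so that $\Omega(\xi^{(j)},\overline{\xi}^{(j)}) = \im\, s^{(j)}$ with $s^{(j)} \in \{1,-1\}$. It suffices to prove $s^{(1)} \neq s^{(2)}$, since then one of them equals $+1$ and (H14) holds. I will work throughout with the Hermitian form $\mathfrak{h}(u,v) := -\im\,\Omega(u,\overline{v})$, whose diagonal values on the normalized eigenfunctions are exactly the $s^{(j)}$.

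The first ingredient is an isotropy identity. Using $J\mathcal{L}_\omega = -\mathcal{L}_\omega^* J$ from Section \ref{sect:statement}, any eigenfunction $\eta$ with $\mathcal{L}_\omega \eta = \mu \eta$ satisfies
\begin{equation*}
\mu\,\Omega(\eta,\overline{\eta}) = \Omega(\mathcal{L}_\omega \eta,\overline{\eta}) = -\Omega(\eta,\mathcal{L}_\omega \overline{\eta}) = -\overline{\mu}\,\Omega(\eta,\overline{\eta}),
\end{equation*}
so $\Omega(\eta,\overline{\eta})=0$ whenever $\Re\mu \neq 0$. Applied just below $\omega_{cr}$, where by hypothesis the two imaginary eigenvalues have bifurcated into a pair $\mu_\pm(\omega)$ with $\Re\mu_\pm \neq 0$, their eigenfunctions $\eta_\pm(\omega)$ are both $\mathfrak{h}$-isotropic. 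On the $2$-dimensional spectral subspace $V(\omega)=\operatorname{span}\{\eta_+(\omega),\eta_-(\omega)\}$ the matrix of $\mathfrak{h}|_{V(\omega)}$ in the basis $\{\eta_+,\eta_-\}$ is therefore $\left(\begin{smallmatrix} 0 & c \\ \overline{c} & 0 \end{smallmatrix}\right)$, and since the Hamiltonian structure forces $\mathfrak{h}$ to be nondegenerate on any finite isolated spectral cluster (so that $c\neq 0$), $\mathfrak{h}|_{V(\omega)}$ has signature $(1,1)$.

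Finally, I transport this signature across $\omega_{cr}$ via the Riesz projection $\Pi(\omega)$ onto the subspace attached to an isolated disk enclosing $\{\im\lambda^{(1)}(\omega),\im\lambda^{(2)}(\omega)\}$ for $\omega>\omega_{cr}$ and $\{\mu_+(\omega),\mu_-(\omega)\}$ for $\omega<\omega_{cr}$. The cluster has constant total algebraic multiplicity $2$ in a neighborhood of $\omega_{cr}$, so $\Pi(\omega)$ is continuous through $\omega_{cr}$, and so is the $2\times 2$ form $\mathfrak{h}|_{\operatorname{Range}\Pi(\omega)}$. By the classical Krein signature theory for Hamiltonian operators (see \cite{kollar, kollar1}), the signature of this cluster form is a collision invariant: two simple imaginary eigenvalues can leave the imaginary axis under perturbation only if they carry opposite Krein signatures. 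Combined with the computation $(1,1)$ below $\omega_{cr}$, this gives $(p_+,p_-)=(1,1)$ also above $\omega_{cr}$, i.e.\ $s^{(1)}\neq s^{(2)}$, proving (H14). The hard part is the signature continuity across the Jordan-block degeneracy at $\omega_{cr}$; a self-contained route computes $\mathfrak{h}$ directly on the $2$-dimensional generalized eigenspace $\ker(\mathcal{L}_{\omega_{cr}}-\im\lambda_{cr})^2$---which carries a genuine size-$2$ Jordan block by the bifurcation picture---and verifies that $\mathfrak{h}$ has signature $(1,1)$ there as well, pinning the signature uniformly on $(\omega_{cr}-\varepsilon_0,\omega_{cr}+\varepsilon_0)$.
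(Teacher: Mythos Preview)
Your proof is correct and follows essentially the same approach as the paper: both use the isotropy identity $\Omega(\eta,\overline{\eta})=0$ for eigenvectors off $\im\R$ to see that the form on the $2$-dimensional spectral cluster has signature $(1,1)$ for $\omega<\omega_{cr}$, and then invoke the Krein signature persistence from \cite{kollar1} to transport this across $\omega_{cr}$. The only cosmetic difference is that the paper phrases the final step as a contradiction (assuming $s^{(1)}=s^{(2)}$ forces the form to be definite on the cluster for $\omega<\omega_{cr}$, contradicting isotropy), whereas you argue directly via the explicit $\left(\begin{smallmatrix}0&c\\\overline{c}&0\end{smallmatrix}\right)$ matrix and Riesz-projection continuity.
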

\proof  Recall, preliminarily,  that if $z$ is an   eigenvalue of $\mathcal{L}_{\omega}$
with $ z\neq -\overline{z}$, that is if $z\not \in \im \R$, then $\Omega (v,\overline{v})=0$
for any $v\in \ker (\mathcal{L}_{\omega} -z)$. This follows from
\begin{equation} \label{eq:bif1}
    z \langle J ^{-1}v,\overline{v}\rangle = \langle J ^{-1}\mathcal{L}_{\omega} v ,\overline{v}\rangle =- \langle v ,\overline{J ^{-1}\mathcal{L}_{\omega}v}\rangle  =- \overline{z} \langle J ^{-1}v,\overline{v}\rangle ,
\end{equation}
which uses $J ^{-1}\mathcal{L}_{\omega}=- \mathcal{L}_{\omega} ^* J ^{-1}$.
If  $z=\im \lambda \in \R $ we have
\begin{equation*}
    \im \lambda  \langle J ^{-1}v,\overline{v}\rangle =  \overline{\im \lambda  \langle J ^{-1}v,\overline{v}\rangle} ,
\end{equation*}
so that $\langle J ^{-1}\mathcal{L}_{\omega} v ,\overline{v}\rangle \in \R$
for any $v\in \ker (\mathcal{L}_{\omega} - \im \lambda )$. When $\im \lambda \in \R $
is simple we have $\langle J ^{-1}\mathcal{L}_{\omega} v ,\overline{v}\rangle \neq 0$
for any $v\in \ker (\mathcal{L}_{\omega} - \im \lambda )\backslash \{ 0\} $ and the sign is
called the \textit{Krein signature}. Since
\begin{equation*}   \langle J ^{-1}\mathcal{L}_{\omega} v ,\overline{v}\rangle
    = \im \lambda  \langle J ^{-1}v,\overline{v}\rangle,
\end{equation*}
it is clear that the Krein  signature of $\im \lambda$ is positive exactly if  $\langle J ^{-1}v,\overline{v}\rangle = \im \varsigma$  with $\varsigma <0$
(this explains why in (H14) the hypothesis $s_j=1$ implies \textit{negative} Krein signature and that positive Krein signature corresponds to $s_j=-1$). Notice that for $\im \lambda$
to have a well defined Krein signature it is not necessary that be simple, and it
is sufficient   that  $\langle J ^{-1}\mathcal{L}_{\omega} v ,\overline{v}\rangle  $
have constant  sign as $v\neq 0$ varies in $  \ker (\mathcal{L}_{\omega} - \im \lambda )$.

Now let us proceed with the proof  of Lemma \ref{lem:bif}. By hypothesis the  $\im \lambda ^{(j)}(\omega )$ for $\omega >\omega_{cr}$ are simple and hence they have a well defined Krein signature, which is constant in
$\omega >\omega_{cr}$  (see \cite{kollar1}) and can be written in the form  $ s ^{(j)}:=-\im \Omega (\xi _{\omega}^{(j)} ,\overline{\xi} _{\omega}^{(j)})$
for appropriate generators $\xi _{\omega}^{(j)} \in \ker ( \mathcal{L}_\omega - \im \lambda ^{(j)}(\omega )) $. We have $ s ^{(j)}  \in \{ -1, 1\}$. For $\omega <\omega_{cr}$ in \cite{pego} it is proved that
  there are two eigenvalues $ z ^{(1)}(\omega ) $ and    $ z ^{(1)}(\omega ) $ with $\Re   z ^{(j)}(\omega )\neq 0$, which  for reasons of symmetry  satisfy $ z ^{(2)}(\omega )   = - \overline{z} ^{(1)}(\omega )  $.  These two  eigenvalues exit from $ \im \lambda ^{(j)}(\omega _{cr}) $  as $\omega$ decreases.
With an argument by contradiction we suppose now that
  $ s ^{(1)}= s ^{(2)}$ for $\omega > \omega_{cr}$. Then, by   Section 6.1 in \cite{kollar1},  this continues to be true for $\omega < \omega_{cr}$ in the sense that
  the quadratic form $\langle J ^{-1}\mathcal{L}_{\omega} \cdot ,\overline{\cdot }\rangle  $,
  which is definite in $\ker ( \mathcal{L}_\omega - \im \lambda ^{(1)}(\omega ))\oplus  \ker ( \mathcal{L}_\omega - \im \lambda ^{(2)}(\omega )) $ for $\omega > \omega_{cr}$ must continue to be definite also in  $\ker ( \mathcal{L}_\omega - z ^{(1)}(\omega ))\oplus  \ker ( \mathcal{L}_\omega - z ^{(2)}(\omega )) $ for $\omega < \omega_{cr}$. But from what we saw
in \eqref{eq:bif1} this is not possible.  This gets us to a contradiction. So  $ s ^{(1)}\neq s ^{(2)}$ for $\omega >\omega_{cr}$ and one of the two must be equal to $1$.
\qed

\bigskip

Since \eqref{NLS3-5}  is translation invariant    it is beyond the
scope of our theory.
In order to find examples of equations  not translation invariant   which satisfy (H14)
it is natural  to add
to  \eqref{NLS3-5} a small
potential $\varepsilon V(|x|) $ with a point of relative minimum in 0.
We will show that  the perturbed equation has vortices. As $\varepsilon \to 0$ they converge   to vortices of  \eqref{NLS3-5}
in any  space $\Sigma _k(\R ^2, \C)$.

Then the spectrum and the eigenfunctions of the linearizations
 $\mathcal{L}^{(\varepsilon)}_{\omega}$  converge  to spectrum and  eigenfunctions of $\mathcal{L} _{\omega}$. In particular,
  assuming our mixed rigorous and  numerical proof that \eqref{NLS3-5}
  satisfies (H14) for $\omega >\omega _{cr}$, then we  will have  obtained
  this result  also for the operators $\mathcal{L}^{\varepsilon}_{\omega}$
  with $\varepsilon \neq 0$.

  Let $\phi_\omega (e^{\im  \theta}  r) = e^{\im m \theta} \psi_\omega (r)$ be the vortex of \cite{pego} with $\psi_\omega\geq 0$.
Under the assumption that   for  \eqref{NLS3-5}
the kernel of $\mathcal L_\omega$ restricted in $L^2_m:=\{u\in L^2\ |\ e^{-\im m \theta}u\ \mathrm{is\ radially\ symmetric}\}$ is $\mathrm{Span}\{J\phi_\omega\}$ (in \cite{pego} it is shown that numerically this appears to be  generically true)
we show that for small $\varepsilon $ there exists $\phi_{\omega,\varepsilon}(e^{\im  \theta}  r)=e^{\im m \theta }\psi_{\omega,\varepsilon}(r)$ which is a solution of
\begin{align}\label{eq:perturb1}
0=-\Delta \phi_{\omega,\varepsilon} + \omega \phi_{\omega,\varepsilon} + \varepsilon V \phi_{\omega,\varepsilon} -|\phi_{\omega,\varepsilon}|^2\phi_{\omega,\varepsilon}+|\phi_{\omega,\varepsilon}|^4\phi_{\omega,\varepsilon}.
\end{align}

More precisely, we have the following proposition.
\begin{proposition}\label{prop:perturb2}
Assume $\mathrm{ker}\left.\mathcal L _{\omega _0}\right|_{L^2_m}=\{J\phi_{\omega _0}\}$.
Then  there exist  $\delta _0>0$ and
 $\varepsilon _0>0$  s.t. for $ \varepsilon   \in (-\varepsilon _0,\varepsilon _0 )$
 and $\omega \in (\omega _0-\delta _0, \omega _0+\delta _0)$
 there exists $\phi_{\omega,\varepsilon}\in \cap_{k\geq 0}\Sigma _k(\R^2,\C)$ which satisfies \eqref{eq:perturb1}.
Furthermore,   the map $(\omega , \varepsilon )\to \phi_{\omega,\varepsilon} $ is
 in $C ^1((\omega _0-\delta _0, \omega _0+\delta _0) \times (-\varepsilon _0,\varepsilon _0) , \Sigma_k)$ for arbitrary $k\geq 0$.
\end{proposition}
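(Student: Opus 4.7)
My plan is to construct $\phi_{\omega,\varepsilon}$ by a standard application of the implicit function theorem, after restricting to the rotationally equivariant ansatz $\phi_{\omega,\varepsilon}(e^{\im\theta}r)=e^{\im m\theta}\psi_{\omega,\varepsilon}(r)$ with $\psi_{\omega,\varepsilon}$ real valued. Under this ansatz equation \eqref{eq:perturb1} reduces to the scalar equation
\begin{equation*}
\mathcal F(\omega,\varepsilon,\psi):=-\partial_r^2\psi-r^{-1}\partial_r\psi+m^2r^{-2}\psi+\omega\psi+\varepsilon V\psi-\psi^3+\psi^5=0,
\end{equation*}
to be solved for $\psi$ close to $\psi_{\omega_0}$. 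I would view $\mathcal F$ as a $C^\infty$ map from a neighbourhood of $(\omega_0,0,\psi_{\omega_0})$ in $\R\times\R\times X_{k+2}$ into $X_k$, where $X_k$ denotes the real Banach space of real radial functions lying in $\Sigma_k(\R^2,\R)$. Since $\mathcal F(\omega_0,0,\psi_{\omega_0})=0$, the whole construction reduces to verifying that the partial derivative $D_\psi\mathcal F(\omega_0,0,\psi_{\omega_0}):X_{k+2}\to X_k$ is a bounded isomorphism.

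This partial derivative is the self-adjoint Schr\"odinger-type operator $L_+:=-\Delta_m+\omega_0-3\psi_{\omega_0}^2+5\psi_{\omega_0}^4$ acting on real radial functions, where $-\Delta_m:=-\partial_r^2-r^{-1}\partial_r+m^2r^{-2}$. To connect $L_+$ to the spectral hypothesis I would decompose $L^2_m$ as the orthogonal sum of the real-radial subspace $\{e^{\im m\theta}a(r):a\in L^2_{\mathrm{rad}}(\R^2,\R)\}$ and the imaginary-radial subspace $\{\im\,e^{\im m\theta}b(r):b\in L^2_{\mathrm{rad}}(\R^2,\R)\}$. A short computation based on the explicit form of $\mathcal V_{\omega_0}$ in \eqref{eq:linearizationL} shows that, in these $(a,b)$ coordinates, $\mathcal L_{\omega_0}$ acts by $(a,b)\mapsto(L_-b,-L_+a)$, where $L_-:=-\Delta_m+\omega_0-\psi_{\omega_0}^2+\psi_{\omega_0}^4$ annihilates $\psi_{\omega_0}$. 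In particular $\ker\mathcal L_{\omega_0}|_{L^2_m}=\{(a,b):L_+a=0,\,L_-b=0\}$, and since $J\phi_{\omega_0}$ corresponds to $(0,\psi_{\omega_0})$ in these coordinates, the assumption $\ker\mathcal L_{\omega_0}|_{L^2_m}=\mathrm{Span}\{J\phi_{\omega_0}\}$ is equivalent to $\ker L_+=\{0\}$ on real radial functions. Being self-adjoint with essential spectrum $[\omega_0,+\infty)$ (by the exponential decay of $\psi_{\omega_0}$), $L_+$ is Fredholm of index zero with trivial kernel, hence a bounded isomorphism $X_{k+2}\to X_k$.

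Once the IFT delivers a $C^\infty$ branch $(\omega,\varepsilon)\mapsto\psi_{\omega,\varepsilon}\in X_{k+2}$ for any preassigned $k$, I would upgrade membership to every $\Sigma_k$ by standard elliptic bootstrapping: rewriting the equation as $(-\Delta+\omega)\psi_{\omega,\varepsilon}=\psi_{\omega,\varepsilon}^3-\psi_{\omega,\varepsilon}^5-(\varepsilon V+m^2r^{-2})\psi_{\omega,\varepsilon}$, using the pointwise exponential decay of $\psi_{\omega_0}$ as a comparison barrier to propagate exponential decay, and using interior and exterior elliptic estimates to propagate higher regularity, one obtains $\psi_{\omega,\varepsilon}\in\mathcal S(\R^2,\R)\subset\bigcap_{k\geq 0}\Sigma_k$. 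The joint $C^1$ dependence in $(\omega,\varepsilon)$ in every $\Sigma_k$ then follows by differentiating the equation in $(\omega,\varepsilon)$ and solving the resulting inhomogeneous linear elliptic problems, whose operators are small perturbations of $L_+$ as maps $X_{k+2}\to X_k$ and therefore remain isomorphisms for $(\omega,\varepsilon)$ close to $(\omega_0,0)$ by stability of Fredholm indices. The main technical obstacle will be the coordinate computation showing $\mathcal L_{\omega_0}$ has the cross form $(a,b)\mapsto(L_-b,-L_+a)$ in the $(a,b)$ basis, because one must keep track carefully of how the rotation by angle $m\theta$ interacts with $J$ and with the matrix $\mathcal V_{\omega_0}$; once this is in place, the rest is a direct application of IFT plus standard elliptic regularity.
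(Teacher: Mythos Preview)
Your proposal is correct and follows essentially the same route as the paper: reduce to the real radial scalar equation via the equivariant ansatz, identify the linearization as the operator $L_+=A_{\omega_0}=-\Delta_m+\omega_0-3\psi_{\omega_0}^2+5\psi_{\omega_0}^4$, deduce $\ker L_+=\{0\}$ from the hypothesis on $\ker\mathcal L_{\omega_0}|_{L^2_m}$, apply the implicit function theorem, and then bootstrap regularity. The only cosmetic difference is that the paper writes the unknown as a correction $v_{\delta,\varepsilon}$ to the known family $\psi_{\omega_0+\delta}$ rather than perturbing $\psi_{\omega_0}$ directly, and it verifies the kernel triviality of $A_{\omega_0}$ by a one-line inclusion argument (if $A_{\omega_0}v=0$ then $e^{\im m\theta}v\in\ker\mathcal L_{\omega_0}|_{L^2_m}$, forcing $v$ to be a real multiple of $J\psi_{\omega_0}$, which lives in the wrong component) rather than your full $(a,b)\mapsto(L_-b,-L_+a)$ block decomposition.
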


To prove Proposition \ref{prop:perturb2}  we consider a preparatory and  standard lemma.
\begin{lemma}\label{lem:perturb3}
Assume $\mathrm{ker}\left.\mathcal L _{\omega  }\right|_{L^2_m}=\{J\phi_{\omega  }\}$.  Then $A_\omega = -\Delta_r + \frac{m^2}{r^2}+ \omega -3\psi_\omega^2+5\psi_\omega ^4$ is invertible in $L^2_{rad}(\R^2,\R)$ and $\|e^{\im m \theta}A_\omega^{-1}u\|_{H^2}\lesssim \|u\|_{L^2}$.
\end{lemma}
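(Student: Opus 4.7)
The plan is to identify $A_\omega$ with the ``plus'' scalar block into which the real Hessian $\nabla^2 E(\phi_\omega)+\omega$ decomposes once we perform the gauge reduction on the equivariant subspace $L^2_m$, then deduce invertibility from the spectral hypothesis and upgrade this to the $H^2$-estimate by elliptic regularity.

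For the reduction, I would parametrize functions in $L^2_m$ as $v=e^{\im m\theta}(w_1+\im w_2)$ with $w_1,w_2$ real and radial. A direct substitution using $\phi_\omega=e^{\im m\theta}\psi_\omega$ and $f(s)=-s+s^2$ gives
\begin{equation*}
(\nabla^2 E(\phi_\omega)+\omega)v \;=\; e^{\im m\theta}\bigl(A_\omega w_1+\im L_- w_2\bigr),
\end{equation*}
where $L_-=-\partial_r^2-r^{-1}\partial_r+m^2/r^2+\omega-\psi_\omega^2+\psi_\omega^4$ (and $L_-\psi_\omega=0$ is exactly the radial standing-wave equation). Since $\mathcal L_\omega=J(\nabla^2 E+\omega)$ has the same kernel as $\nabla^2 E+\omega$ and $J$ preserves $L^2_m$, and since $J\phi_\omega=-\im e^{\im m\theta}\psi_\omega$ corresponds to $(w_1,w_2)=(0,-\psi_\omega)$, the hypothesis $\ker\mathcal L_\omega|_{L^2_m}=\mathrm{Span}\{J\phi_\omega\}$ translates into $\ker A_\omega|_{L^2_{rad}(\R^2,\R)}=\{0\}$ (with $\ker L_-=\mathrm{Span}\{\psi_\omega\}$ absorbing the gauge mode).

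Next, I would invoke self-adjointness and Weyl's theorem. The operator $A_\omega$, Friedrichs-extended from $C_c^\infty((0,\infty))$ in $L^2((0,\infty),r\,dr)$, is self-adjoint; the potential $-3\psi_\omega^2+5\psi_\omega^4$ is a relatively compact perturbation of $-\Delta_r+m^2/r^2+\omega$ because $\psi_\omega$ decays exponentially, so $\sigma_{\mathrm{ess}}(A_\omega)=[\omega,\infty)$. Hence any point of $\sigma(A_\omega)$ lying below $\omega>0$ (in particular $0$) would have to be a discrete eigenvalue of finite multiplicity; the injectivity just established rules this out at $\lambda=0$, so $0\in\rho(A_\omega)$ and $A_\omega^{-1}\colon L^2_{rad}\to L^2_{rad}$ is bounded. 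Then for the $H^2$-estimate I set $w=A_\omega^{-1}u$ and $U=e^{\im m\theta}w\in L^2_m$: by construction $(-\Delta+\omega-3\psi_\omega^2+5\psi_\omega^4)U=e^{\im m\theta}u$ on $\R^2$, so $\|\Delta U\|_{L^2}\leq\|u\|_{L^2}+C(\omega,\|\psi_\omega\|_\infty)\|U\|_{L^2}\lesssim\|u\|_{L^2}$, and global elliptic regularity on $\R^2$ yields $\|U\|_{H^2}\lesssim\|\Delta U\|_{L^2}+\|U\|_{L^2}\lesssim\|u\|_{L^2}$.

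The main obstacle is the initial algebraic reduction: verifying that $\nabla^2 E+\omega$ really block-diagonalizes in the $(w_1,w_2)$ coordinates with $A_\omega$ in the first slot (rather than $L_-$), so that the kernel hypothesis can be translated precisely into injectivity of $A_\omega$ and not of the other block. Once this decomposition is in hand, the spectral structure (essential spectrum above $\omega$, injectivity implies invertibility) and the Sobolev bound are standard, modulo a brief comment on the limit-point behavior at $r=0$ coming from the centrifugal term $m^2/r^2$ with $m\geq 1$.
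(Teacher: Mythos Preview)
Your proposal is correct and follows essentially the same route as the paper. The paper embeds a real radial $v$ with $A_\omega v=0$ into $L^2_m$ via $u=e^{\im m\theta}v$ and checks directly that this forces $\mathcal L_\omega u=0$, then uses the kernel hypothesis and the real/imaginary mismatch with $J\phi_\omega$ to conclude $v=0$; your full block decomposition $(\nabla^2 E+\omega)|_{L^2_m}\simeq A_\omega\oplus L_-$ is the same computation organized more systematically, and your explicit appeal to self-adjointness and Weyl's theorem for ``injective $\Rightarrow$ invertible'' makes precise a step the paper leaves implicit.
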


\begin{proof}
Let $v\in L^2_{rad}(\R^2,\R)$ satisfy $A_\omega v=0$.
Then, multiplying by $e^{\im m \theta}$  we have
\begin{align*}
(-\Delta + \omega -2|\phi_\omega|^2+3|\phi_\omega|^4)u+\(-\phi_\omega^2+ 2\phi_\omega^2|\phi_\omega|^2\)\bar u=0,
\end{align*}
where $u=e^{\im m \theta}v$.
Therefore, using the natural identification between $\C$ and $\R^2$, we have
\begin{align*}
\mathcal L_\omega u = 0.
\end{align*}
By the assumption, we have $u= a J \phi_\omega$ and thus $v=a J\psi _\omega$.
However, $v$  has values in $\R \times \{0\}$ while $ J\psi _\omega$ has values in $\{0\} \times \R$.
So $a=0$ and   $\mathrm{ker}A_\omega=\{0\}$.
Therefore, $A_\omega$ is invertible.
Finally suppose $A_\omega v =u$.
Then, first we have $\|e^{\im m \theta}v\|_{L^2}=\|v\|_{L^2}\lesssim \|u\|_{L^2}$.
Next, multiplying by $e^{\im m \theta}$, we have
\begin{align*}
(-\Delta + \omega )e^{\im m \theta}v = \(3\psi_\omega^2-5\psi_\omega^4\) e^{\im m \theta}v + e^{\im m \theta} u.
\end{align*}
Taking the $L^2$ norm of both sides, we have the conclusion.
\end{proof}

\begin{proof}[Proof of Proposition \ref{prop:perturb2}]
Set $\delta =\omega -\omega _0$ and consider
$\phi_{\omega _0 + \delta ,\varepsilon}=e^{\im m \theta}(\psi _{\omega _0 + \delta} + v _{\delta ,\varepsilon})$, where $v _{\delta ,\varepsilon}$ is radially symmetric and real valued.
Then, substituting this into \eqref{eq:perturb1}  and for  $v=v _{\delta ,\varepsilon} $, we have
\begin{align}\label{eq:perturb4}
A_{\omega _0  } v =   G   ( \delta , \varepsilon , v) \text{ where } G   ( \delta , \varepsilon , v):= -\varepsilon V (\psi_{\omega _0 + \delta} + v ) + \left [\left ( 3  \psi_{\tau }  ^2   -  5\psi_\tau ^4 \right ) \right ] _{\omega _0}^{\omega _0 + \delta}v -    \delta v   +N_{\omega _0 + \delta}(v  ),
\end{align}
with $N_{\omega _0 + \delta}(v  )$     nonlinear in $v $   and   the convention $\left[ f(\tau ) \right ] _{a}^{b}   =f(b)-f(a)$.
We can rephrase \eqref{eq:perturb4} as   \begin{equation}  \label{eq:perturb41}  \begin{aligned}   & F(\delta ,\varepsilon , v) =0 \text{ where }  F(\delta ,\varepsilon , v):= v-A _{\omega _0}^{-1} G   ( \delta , \varepsilon , v)  .
\end{aligned}
\end{equation}
The function $F$ is in $ C^1( (\omega _0-\delta _0, \omega _0+\delta _0) \times (-\varepsilon _0,\varepsilon _0) \times H^2, H^2  )$. An elementary application of the implicit function theorem yields a function $v _{\delta , \varepsilon}  $ in $ C^1( (\omega _0-\delta _0, \omega _0+\delta _0) \times (-\varepsilon _0,\varepsilon _0)  , H^2  )$.
By a standard bootstrapping argument,   $H^2$ can be replaced by $\Sigma _k$ for arbitrary $k$.
\end{proof}

 In \cite{pego}  it is checked numerically that for a generic vortex of the \eqref{NLS3-5}  \begin{equation} \label{eq:instpa1}\begin{aligned}
&  N_g ( {\mathcal L}_\omega ) = \text{Span}\{ J \phi _\omega ,  \partial _{x_1}\phi _\omega ,  \partial _{x_2}\phi _\omega , \partial _{ \omega}    \phi _\omega  , J{x_1}\phi _\omega   , J{x_2}\phi _\omega \} .
\end{aligned}
\end{equation}
  We have chosen $V(|x|)$  with a relative  minimum at 0. The following fact is
well known, see Theorem 4.1 \cite{zhousigal} and Theorem 3.0.2 \cite{zhousigal1}.

\begin{lemma} \label{lem:zhou}  Consider an $m=1$ vortex  of the\eqref{NLS3-5}.
Assume $\frac{d}{d\omega} \|\phi _\omega \|_2\neq 0$ and  \eqref{eq:instpa1}   for the linearized operator $\mathcal{L}_\omega$. Then for $\varepsilon > 0$  sufficiently small we have \begin{equation} \label{eq:zhou1}\begin{aligned}
&\ker {\mathcal L}_\omega ^{(\varepsilon)} =\text{Span}\{ J     \phi  _{\omega ,\varepsilon }    \} \text{ and } N_g ( {\mathcal L}_\omega  ^{(\varepsilon)}) = \text{Span}\{ J \phi _{\omega ,\varepsilon }  , \partial _{ \omega}    \phi _{\omega ,\varepsilon }   \} .
\end{aligned}
\end{equation}
Furthermore, ${\mathcal L}_\omega ^{(\varepsilon)} $ has an eigenvalue of algebraic and geometric multiplicity 2  which is  of the form
$\im \mu (\varepsilon)=\im\varepsilon \sqrt{2e } +o(\varepsilon) $,  with $e $   the eigenvalue  of the
Hessian matrix of the potential in 0, and  with  eigenfunctions
\begin{equation}\label{eq:zhou2}
    \Psi  _j ^{(\varepsilon)}=      \sqrt{2} \partial _{x_j}  \phi _{\omega   }  +\im \sqrt{e}\varepsilon J{x_j}  \phi _{\omega   }  +o(\epsilon),
\end{equation}
with $\|  o  (\varepsilon ) \| _{\Sigma _k}=o (\varepsilon ) $  for any $k$.
\end{lemma}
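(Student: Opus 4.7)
The plan splits into two tasks: verifying the kernel and generalized-kernel structure \eqref{eq:zhou1}, and locating the bifurcated eigenvalue $\im\mu(\varepsilon)$ together with its eigenfunctions \eqref{eq:zhou2}.

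For \eqref{eq:zhou1}, the $U(1)$ gauge symmetry is preserved because $V(|x|)$ is radial, giving $J\phi_{\omega,\varepsilon}\in\ker \mathcal L_\omega^{(\varepsilon)}$. Differentiating \eqref{eq:perturb1} in $\omega$ yields $(\nabla^2 E^{(\varepsilon)}(\phi_{\omega,\varepsilon})+\omega)\partial_\omega\phi_{\omega,\varepsilon}=-\phi_{\omega,\varepsilon}$, hence $\mathcal L_\omega^{(\varepsilon)}\partial_\omega\phi_{\omega,\varepsilon}=-J\phi_{\omega,\varepsilon}$, placing $\partial_\omega\phi_{\omega,\varepsilon}\in N_g(\mathcal L_\omega^{(\varepsilon)})\setminus\ker \mathcal L_\omega^{(\varepsilon)}$ since $\frac{d}{d\omega}\|\phi_{\omega,\varepsilon}\|_2\neq 0$ for small $\varepsilon$ by continuity from the unperturbed hypothesis. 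To exclude additional generalized-kernel elements, I would invoke upper semicontinuity of Riesz spectral projectors: for $|\varepsilon|$ small, the total algebraic multiplicity of $\mathcal L_\omega^{(\varepsilon)}$ in a small fixed disk about $0$ equals $\dim N_g(\mathcal L_\omega)=6$ coming from \eqref{eq:instpa1}; two dimensions are absorbed by $\mathrm{Span}\{J\phi_{\omega,\varepsilon},\partial_\omega\phi_{\omega,\varepsilon}\}$, while the remaining four form the two eigenvalue pairs $\pm\im\mu(\varepsilon)$ of multiplicity $2$ each.

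For the bifurcated eigenvalues, I would perform a Lyapunov--Schmidt reduction on the $4$-dimensional invariant subspace $V_0=\mathrm{Span}\{\partial_{x_j}\phi_\omega,\,Jx_j\phi_\omega:\,j=1,2\}$ of $\mathcal L_\omega$ encoding the broken translational and Galilean invariance. On $V_0$, $\mathcal L_\omega$ acts as two copies of a $2\times 2$ Jordan block via $\mathcal L_\omega\partial_{x_j}\phi_\omega=0$ and $\mathcal L_\omega Jx_j\phi_\omega=2\partial_{x_j}\phi_\omega$, the latter obtained by differentiating the Galilean-boosted solution $e^{\im(v\cdot x/2-|v|^2 t/4+\omega t)}\phi_\omega(x-vt)$ of the unperturbed NLS in $v$ at $v=0$. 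Writing $\mathcal L_\omega^{(\varepsilon)}=\mathcal L_\omega+\varepsilon\mathcal A+O(\varepsilon^2)$ with $\mathcal A=JV+J\partial_\varepsilon\mathcal V_\omega^{(\varepsilon)}|_{\varepsilon=0}$, standard perturbation theory deforms $V_0$ into a nearby invariant subspace $V_\varepsilon$ and reduces the problem to a $4\times 4$ eigenvalue equation, which by radial symmetry of $V$ decouples into two identical $2\times 2$ blocks, one per translational direction. The stated asymptotics are then obtained by substituting the ansatz $\Psi_j^{(\varepsilon)}=\sqrt{2}\partial_{x_j}\phi_\omega+\im\sqrt{e}\varepsilon Jx_j\phi_\omega+\varepsilon w_j+o(\varepsilon)$, with $w_j\in V_0^\perp$, together with $\im\mu(\varepsilon)=\im\varepsilon\sqrt{2e}+o(\varepsilon)$, into $\mathcal L_\omega^{(\varepsilon)}\Psi_j^{(\varepsilon)}=\im\mu(\varepsilon)\Psi_j^{(\varepsilon)}$ and matching orders in $\varepsilon$: the $O(1)$ equation is automatic, the $O(\varepsilon)$-projection onto $\ker\mathcal L_\omega^*$ fixes both $\mu$ and the coefficient in front of $Jx_j\phi_\omega$, and the Fredholm inverse of $\mathcal L_\omega$ on $(\mathrm{Range}\,\mathcal L_\omega)\cap V_0^\perp$ determines $w_j$.

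The main technical obstacle is the explicit computation of the off-diagonal entry of the reduced $2\times 2$ block, essentially a multiple of $\langle V\partial_{x_j}\phi_\omega,\partial_{x_j}\phi_\omega\rangle$ together with the analogous contribution coming from $\partial_\varepsilon\mathcal V_\omega^{(\varepsilon)}|_{\varepsilon=0}$ (itself driven by the first-order perturbation $v_\omega=-A_\omega^{-1}(V\phi_\omega)$ of the vortex provided by Lemma \ref{lem:perturb3}). A Taylor expansion of $V$ at its minimum, combined with integration by parts exploiting the radial symmetry of $|\phi_\omega|^2$ and $V$, must reduce this entry to a geometric factor times the Hessian eigenvalue $e$ yielding the stated coefficient $\sqrt{2e}$. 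Since $\mathrm{Hess}\,V(0)=e\,I_{2\times 2}$, both directions $j=1,2$ give the same eigenvalue, producing the multiplicity-$2$ statement.
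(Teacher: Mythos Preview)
The paper does not give a self-contained proof of this lemma: it refers to Theorem~3.0.2 of \cite{zhousigal1} for everything and supplies only one extra argument, namely why $\im\mu(\varepsilon)$ has multiplicity exactly~$2$. Your Lyapunov--Schmidt outline on the four-dimensional space $V_0=\mathrm{Span}\{\partial_{x_j}\phi_\omega,\,Jx_j\phi_\omega\}$, together with the kernel computation via gauge symmetry, $\omega$-differentiation, and semicontinuity of the Riesz projector, is essentially a sketch of what the Zhou--Sigal reference contains; so you are reconstructing the argument the paper outsources rather than reproducing the paper's own proof.

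Where the two genuinely diverge is the multiplicity-$2$ claim. You argue that ``radial symmetry of $V$'' decouples the reduced $4\times 4$ matrix into two identical $2\times 2$ blocks. That phrasing is not quite right: $V$ is radial but $\phi_\omega$, and hence the linearization potential $\mathcal V_\omega$, is not, so radiality of $V$ alone does not give a symmetry of $\mathcal L_\omega^{(\varepsilon)}$. The paper instead exploits the specific $m=1$ equivariance $J^{-1}\mathcal V_\omega(x_1,x_2)J=\mathcal V_\omega(x_2,-x_1)$, i.e.\ invariance of the full operator under the combined action $u(x_1,x_2)\mapsto Ju(x_2,-x_1)$. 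From $J\phi_\omega(x_1,x_2)=-\phi_\omega(x_2,-x_1)$ it then checks directly that this map sends $\Psi_1^{(\varepsilon)}$ to $\Psi_2^{(\varepsilon)}$, forcing them to share the same eigenvalue. This is cleaner than appealing to a reduced-matrix decoupling because it works at the level of the full operator and explains \emph{why} the two blocks are identical. Your decoupling statement is ultimately correct, but for this equivariance reason rather than for radiality of $V$ by itself; once you make that substitution your argument and the paper's coincide.
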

\proof Everything is proved in Theorem 3.0.2 \cite{zhousigal1}. We only
remark that there is only one small eigenvalue $\im \mu (\varepsilon)\in \im \R_+$  which has to be of multiplicity 2.
Indeed,  ${\mathcal L}_\omega  ^{(\varepsilon)}  \Psi  _1 ^{(\varepsilon)} = \im \mu (\varepsilon)\Psi  _1 ^{(\varepsilon)}$ implies by the symmetry
  $J ^{-1}\mathcal{V}_\omega (x_1, x_2 )  J=\mathcal{V}_\omega ( x_2, -x_1 ) $
  also   ${\mathcal L}_\omega  ^{(\varepsilon)}  J\Psi  _1 ^{(\varepsilon)} ( x_2, -x_1 ) = \im \mu (\varepsilon)J\Psi  _1 ^{(\varepsilon)} ( x_2, -x_1 )$.  By     $J \phi _{\omega   } (x_1, x_2) =-   \phi _{\omega   } ( x_2, -x_1 )$
  we have
 \begin{equation*}
\begin{aligned}
&  J\Psi  _1 ^{(\varepsilon)} ( x_2, -x_1 )=  J \sqrt{2} (\partial _{x_1}  \phi _{\omega   }) ( x_2, -x_1 ) -\im \sqrt{e}\varepsilon  {x_2}  \phi _{\omega   }( x_2, -x_1 )+o(\epsilon) =\\&      \sqrt{2}  \partial _{x_2} [ J\phi _{\omega   }  ( x_2, -x_1 ) ]+\im \sqrt{e}\varepsilon J{x_2}  \phi _{\omega   }( x_1, x_2 )+o(\epsilon) =  \sqrt{2}  \partial _{x_2}  \phi _{\omega }  ( x_1, x_2 )  +\im \sqrt{e}\varepsilon J{x_2}  \phi _{\omega   }( x_1, x_2 )+o(\epsilon)
\end{aligned}
\end{equation*}
   and hence necessarily
   $J\Psi  _1 ^{(\varepsilon)} ( x_2, -x_1 )=\Psi  _2 ^{(\varepsilon)} ( x_1, x_2) $. This implies that $\im \mu (\varepsilon)$ has multiplicity 2.

\qed

In the following remarks we discuss whether the perturbations of equation  \eqref{NLS3-5} satisfy the hypotheses
of Theorem \ref{theorem-1.1}.

\begin{remark}
\label{rem:pego1-3}
  (H1)--(H3) are trivially satisfied.\end{remark}
\begin{remark}
\label{rem:pego4}
 It is known that   equation  \eqref{NLS3-5} satisfies (H4) and the same holds for the
perturbations.\end{remark}
 \begin{remark}
\label{rem:pego5}
According to
the numerical experiments in \cite{pego}  hypothesis  (H5)
is true generically.  The same will be true for the perturbations.\end{remark}
\begin{remark}
\label{rem:pego6}
 (H6)  is proved numerically for $\omega \ge \omega _{cr}$ in \cite{pego} and for the perturbations is a
consequence of   Proposition \ref{prop:perturb2}  and Lemma    \ref{lem:zhou}.\end{remark}
 \begin{remark}
\label{rem:pego7}
 Assuming  the numerical results in
\cite{pego} which claim that \eqref{eq:instpa1} is true for generic $\omega$, then for the perturbations  (H7)
is a consequence of Lemma \ref{lem:zhou}.\end{remark}

 \begin{remark}
\label{rem:pego8-12} We don't know the status of hypotheses
(H8)--(H12) for \eqref{NLS3-5}, but if they hold, they hold also for the
perturbations.

\noindent As we have already mentioned,
failure of (H8) would yield some Jordan block of dimension 2 or higher forcing $e^{t \mathcal{L}_\omega}$ to grow algebraically
in the invariant space $N_g^\perp
(\mathcal{L}_\omega ^{\ast}) $   in  \eqref{eq:begspectdec2}  producing essentially a   linear instability, which would yield
an easy to detect  nonlinear instability. Since this is not what the numerical experiments show, we  conclude that probably
\cite{pego,Towers} confirm (H8).

\noindent In \cite{pego} embedded eigenvalues
and singularities at the edges are not discussed explicitly but hypotheses
(H9)--(H10)  seem to be confirmed. In \cite{pego} isolated eigenvalues are obtained as zeros of an Evans function.
It is observed, see the discussion on pp. 371--372, that sometimes the Evans function is small near the continuous spectrum.
This smallness is attributed not to eigenvalues sitting in the continuous spectrum, but rather  to \textit{resonances} on the other side
of the continuous spectrum (in essence, to zeros of an analytic continuation of the Evans function beyond the continuous spectrum).

\noindent Hypotheses (H11)-(H12) state  that the eigenvalues  between 0 and $\im \omega$
are positioned in a generic way. This is plausible to expect and probably can be proved for  $V$ generic. We did not attempt the proof.
  \end{remark}

\begin{remark}
\label{rem:pego13}
 We have discussed at length Hypothesis (H13) in Section \ref{section:introduction}. It ought to be checked
 directly for perturbations  of \eqref{NLS3-5} or  for equation \eqref{NLS3-5}. Notice
 that, since \eqref{NLS3-5} is translation invariant,  the search of an effective Hamiltonian is somewhat more involved, see \cite{Cu0,CM1}.\end{remark}

\begin{remark}
\label{rem:pego14}
 We have already discussed (H14) which is true, assuming that each of the    eigenvalues $ \im \lambda ^{(j)}(\omega ) $    has some fixed Krein signature.
Indeed this  is what happens if  each eigenvalue  is simple, as we assumed in {Lemma} \ref{lem:bif}. Even if there is an eigenvalue of higher multiplicity,  it is enough to ask for  $\im \Omega (v,\overline{v})$ to have fixed sign for
any eigenvector. This property on the signatures continue to hold for the perturbations.\end{remark}

 \noindent

\appendix

\section{Appendix: proof of {Lemma} \ref{lem:conditional4.2} }
 \label{app:4.2}

First of all, it is equivalent to consider equation \ref{eq:eqh}  for $h$.
Let $ X_c  = M ^{-1}L^2_c(\omega _1)$ and by an abuse of notation
let us set $\widetilde{P}_c = M ^{-1}P_c(\omega _1) M$, where $P_c$ is introduced under Lemma \ref{lem:Specdec}. Set also $\mathcal{K}= \mathcal{K}_{\omega _1}$
 The following  three lemmas are  Lemma 3.1--3.3 in \cite{CT}.
\begin{lemma}[Strichartz estimate]
  \label{lem:lem3.1}
 There exists a positive
number $C $   such that
for any $k\in [0,2]$:
  {\item {(a)}}
 for any $h= \widetilde{P}_c h$ and any admissible  all pair  $(p,q)$,
$$\|e^{-\im  t\mathcal{K} } h\|_{L_t^pW_x^{k,q } }\le C\|h\|_{H^k};$$
 {\item {(b)}}
  for any $g(t,x)\in
S(\R^2)$ and any couple of admissible pairs $(p_{1},q_{1})$
$(p_{2},q_{2})$ we have
$$
\|\int_{0}^te^{-\im (t-s)\mathcal{K} } \widetilde{P}_c g(s,\cdot)ds\|_{L_t^{p_1}W_x^{k,q_1 } } \le
C\|g\|_{L_t^{p_2'}W_x^{k,q_2' } }.
$$\end{lemma}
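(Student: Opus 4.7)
The plan is to reduce the claim to the classical 2D Strichartz estimates for the free Schr\"odinger group via an intertwining/wave operator construction, essentially following the strategy of \cite{CT}. Writing
\[
\mathcal{K} = \sigma_3(-\Delta + \omega_1) + \mathcal{W}, \qquad \mathcal{W} := \sigma_3\bigl(V + M^{-1}\mathcal{V}_{\omega_1} M\bigr),
\]
I view $\mathcal{K}$ as an exponentially decaying matrix perturbation of the ``free'' diagonal operator $\sigma_3(-\Delta + \omega_1) = \mathrm{diag}(-\Delta+\omega_1,\,\Delta-\omega_1)$, whose two blocks are, up to sign and a constant shift, the free 2D Schr\"odinger generator, for which Strichartz is classical in the range $q>2$.

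First I would establish a limiting absorption principle for $R_{\mathcal{K}}(z)$ on polynomially weighted $L^2$ spaces, uniformly up to the continuous spectrum $(-\infty,-\omega_1]\cup[\omega_1,\infty)$. Away from the thresholds this follows from (H9), the exponential decay of $\mathcal{W}$, and standard Agmon/Kato compactness-type arguments; near the thresholds $\pm\omega_1$ it relies on (H10) to rule out threshold eigenvalues and resonances, together with an explicit expansion of the free 2D resolvent, whose integral kernel has a logarithmic singularity at the threshold. Then I would construct the wave operators
\[
W_\pm := \text{s-lim}_{t\to\pm\infty} e^{\im t \mathcal{K}}\,e^{-\im t\,\sigma_3(-\Delta+\omega_1)}
\]
by Cook's method, establish completeness onto $X_c = \widetilde{P}_c L^2$, and prove $W^{k,q}$--boundedness of $W_\pm$ and of their inverses for $1<q<\infty$ and $k\in[0,2]$. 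Combining the intertwining identity $e^{-\im t\mathcal{K}}\widetilde{P}_c = W_\pm\, e^{-\im t\sigma_3(-\Delta+\omega_1)}\, W_\pm^{-1}$ with the classical Strichartz bounds for each diagonal block then yields (a).

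For (b) I would use (a) together with its $L^{q_2'}$--$L^{p_2'}$ dual form, and then apply the Christ--Kiselev lemma to replace the full integral over $\R$ by the Duhamel integral over $(0,t)$. Since every admissible pair used here satisfies the \emph{strict} inequality $q>2$, the endpoint Strichartz bound (which fails in 2D) is never invoked and the hypotheses of Christ--Kiselev are satisfied.

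The main obstacle is the $W^{k,q}$--boundedness of the wave operators in dimension two. The 2D threshold expansion of $R_{\mathcal{K}}(z)$ near $\pm\omega_1$ is genuinely more delicate than in dimensions $\ge 3$ because of the logarithmic singularity of the free resolvent kernel, so hypothesis (H10) must be used in an essential way to obtain a usable expansion of the perturbed resolvent and a distorted Fourier transform with good mapping properties. This is precisely the technical core of \cite{CT}; once these bounds are in hand, the remaining steps above are routine.
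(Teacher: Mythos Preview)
Your outline is correct and is precisely the strategy of \cite{CT}, which is all the paper does here: the lemma is stated without proof and attributed verbatim as Lemma~3.1 of \cite{CT}. In particular, the paper's own ``proof'' is simply the citation, and your sketch (limiting absorption principle under (H9)--(H10), construction and $W^{k,q}$--boundedness of the wave operators in 2D, intertwining with the free flow, then Christ--Kiselev for the retarded estimate) matches the approach of \cite{CT} that the authors are invoking.
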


\begin{lemma}
  \label{lem:lem3.2} Let $s>1$. $\exists$   $C=C $  such that:
   {\item {(a)}}
  for any $f\in S(\R ^2 )$,
$$  \|  e^{-\im t \mathcal{K}}\widetilde{P}_c f\|_{  L^2_tL_x^{2,-s}} \le
 C\|f\|_{L^2} ;
  $$
  {\item {(b)}}
  for any $g(t,x)\in
 {S}(\R^2)$
$$ \left\|\int  _{\R} e^{\im t\mathcal{K}}\widetilde{P}_c g(t,\cdot)dt\right\|_{L^2_x} \le
C\| g\|_{ L_t^2L_x^{2,s}}.
$$\end{lemma}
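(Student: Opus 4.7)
My approach is to prove the two statements as a dual pair and reduce both to a uniform resolvent bound for $\mathcal{K}$ on weighted $L^2$ spaces. Writing $Tf := e^{-\im t\mathcal{K}}\widetilde{P}_c f$, statement (a) says $T: L^2 \to L^2_t L^{2,-s}_x$ is bounded, while (b) says the formal adjoint (with respect to the appropriate pairing) is bounded from $L^2_t L^{2,s}_x$ into $L^2$. Given the symmetry structure $J\mathcal{L}_\omega = -\mathcal{L}_\omega^* J$ inherited by $\mathcal{K}$ after the change of frame by $M$, a suitable conjugation turns (b) into the dual of (a), so by the $TT^*$ argument both follow from the bound
\begin{equation*}
\bigl\|g\mapsto\int_\R e^{-\im(t-s)\mathcal{K}}\widetilde{P}_c g(s,\cdot)\,ds\bigr\|_{L^2_t L^{2,s}_x\to L^2_t L^{2,-s}_x} < \infty.
\end{equation*}

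By Plancherel in $t$, this convolution bound is equivalent to the limiting absorption principle
\begin{equation*}
\sup_{\lambda\in \R,\,\varepsilon>0}\bigl\|(\mathcal{K} - \lambda \mp \im\varepsilon)^{-1}\widetilde{P}_c\bigr\|_{L^{2,s}\to L^{2,-s}} < \infty,
\end{equation*}
together with the existence of the boundary values as $\varepsilon\to 0^+$. I would split $\mathcal{K} = \mathcal{K}_0 + \mathcal{W}$ with $\mathcal{K}_0 = \sigma_3(-\Delta + \omega)$ and $\mathcal{W} = \sigma_3 V + \sigma_3 M^{-1}\mathcal{V}_\omega M$ a rapidly decaying matrix potential. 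For $\mathcal{K}_0$ the resolvent decouples into two shifted scalar Laplacians, and the classical Agmon/Kato LAP for $-\Delta$ in $\R^2$ gives the desired uniform weighted bound for $\lambda$ away from the thresholds $\pm\omega$. I then iterate via the second resolvent identity
\begin{equation*}
(\mathcal{K} - \zeta)^{-1} = (\mathcal{K}_0 - \zeta)^{-1} - (\mathcal{K}_0 - \zeta)^{-1}\mathcal{W}(\mathcal{K} - \zeta)^{-1}
\end{equation*}
and a Fredholm alternative: since $\mathcal{W}$ is Schwartz-class, $\mathcal{W}(\mathcal{K}_0-\zeta)^{-1}$ is compact on $L^{2,-s}$, and hypothesis (H9) together with the projection $\widetilde{P}_c$, which kills the discrete-spectrum contribution, guarantees invertibility of $I+\mathcal{W}(\mathcal{K}_0-\zeta)^{-1}$ on compact subsets of $\R\setminus\{\pm\omega\}$, with uniform operator bounds.

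The main obstacle is the behavior at the thresholds $\pm\omega$: in two dimensions the free resolvent $(-\Delta-(\lambda\mp\omega)\mp \im 0)^{-1}$ develops a logarithmic singularity as one approaches the edge, which only admits a Neumann-series inversion of the Birman--Schwinger kernel if $\pm\omega$ are neither eigenvalues nor resonances of $\mathcal{K}$ — and this is exactly what hypothesis (H10) provides. Quantitatively, a threshold LAP in the spirit of Jensen--Nenciu for 2D yields uniform bounds on $(\mathcal{K}-\lambda\mp\im 0)^{-1}\widetilde{P}_c$ in a neighborhood of $\pm\omega$ under (H10). The global uniform bound in $\lambda\in \R$ is then obtained by patching this threshold analysis with the Fredholm argument on compact sets and with the high-frequency decay $\|(\mathcal{K}_0-\lambda\mp \im 0)^{-1}\|_{L^{2,s}\to L^{2,-s}} = O(|\lambda|^{-1/2})$ as $|\lambda|\to\infty$, which makes the perturbation term Neumann-invertible for $|\lambda|$ large. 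Inverse Fourier transform in $t$ then converts the uniform resolvent estimate together with its boundary values into the Kato smoothing bound, yielding both (a) and (b) simultaneously.
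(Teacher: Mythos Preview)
The paper does not give its own proof here but quotes the lemma from \cite{CT}; your outline --- $TT^*$ reduction to a limiting absorption principle, established by perturbing from $\sigma_3(-\Delta+\omega_1)$ via the second resolvent identity and Fredholm alternative under (H9), with the 2D thresholds $\pm\omega_1$ handled by a Jensen--Nenciu type expansion under (H10) --- is correct and is essentially the argument carried out in \cite{CT}. The ``suitable conjugation'' you invoke is by $\sigma_3$: since $M^*M=2I$ the matrix $M^{-1}\mathcal{V}_{\omega_1}M$ is self-adjoint, hence $\mathcal{K}^*=\sigma_3\mathcal{K}\sigma_3$ and $\widetilde{P}_c^{\,*}=\sigma_3\widetilde{P}_c\sigma_3$, which is exactly what turns (b) into the genuine adjoint of (a) and lets Kato's smooth-operator theory go through despite $\mathcal{K}$ being non-self-adjoint.
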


\begin{lemma}
  \label{lem:lem3.3} Let $s>1$.  $\exists$   $C $
such that $\forall$ $g(t,x)\in {S}(\R^2)$ and $t\in\R$:
$$   \left\|  \int_0^t e^{-\im (t-s)\mathcal{K}}\widetilde{P}_c g(s,\cdot)ds\right\|_{  L^2_tL_x^{2,-s}} \le C\|
g\|_{  L^2_tL_x^{2, s}} .
$$
\end{lemma}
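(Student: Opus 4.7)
\noindent\textbf{Proof proposal for Lemma \ref{lem:lem3.3}.} The plan is to pass to the Fourier side in time, where the retarded propagator becomes multiplication by the boundary-value resolvent of $\mathcal{K}$, and then invoke a uniform limiting absorption principle on the continuous spectral subspace $X_c$.

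First I would extend $g$ by zero to $t<0$, set
\begin{equation*}
u(t):=\int_{-\infty}^t e^{-\im(t-s)\mathcal{K}}\widetilde{P}_c g(s,\cdot)\,ds,
\end{equation*}
which agrees with the integral in the statement on $t\ge 0$ and is a weak solution of $\im\partial_t u-\mathcal{K}u=-\im\widetilde{P}_c g$ with $u\to 0$ as $t\to -\infty$. Taking the time Fourier transform $\mathcal{F}_t$ and using the retarded boundary condition gives, at least formally,
\begin{equation*}
\widehat{u}(\tau,\cdot)=-\im\,R^{+}_{\mathcal{K}}(\tau)\,\widetilde{P}_c \widehat{g}(\tau,\cdot),\qquad R^{+}_{\mathcal{K}}(\tau):=\lim_{\varepsilon\to 0^{+}}(\mathcal{K}-\tau-\im\varepsilon)^{-1}.
\end{equation*}
By Plancherel in $t$ and Fubini, the desired inequality reduces to the uniform bound
\begin{equation}\label{eq:LAPgoal}
\sup_{\tau\in\R}\,\bigl\|R^{+}_{\mathcal{K}}(\tau)\widetilde{P}_c\bigr\|_{L^{2,s}_x\to L^{2,-s}_x}\le C,
\end{equation}
which, once established, yields $\|u\|_{L^2_tL^{2,-s}_x}\le C\|g\|_{L^2_tL^{2,s}_x}$ and, a fortiori, the same bound for the truncated integral restricted to $t\in[0,\infty)$.

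Next I would establish \eqref{eq:LAPgoal}. For the unperturbed diagonal operator $\mathcal{K}_{0}:=\sigma_3(-\Delta+V+\omega_1)$, the limiting absorption principle at all $\tau\in\R$ follows from the standard Agmon--Kato theory for $-\Delta+V$ on $\R^2$ (with $V$ Schwartz, as in (H3)) together with the structure of $\sigma_3$, provided the Birman--Schwinger operator has no $+1$ eigenvalue at threshold — which is the content of (H10). For the full $\mathcal{K}$ one writes the second resolvent identity
\begin{equation*}
R^{+}_{\mathcal{K}}(\tau)\widetilde{P}_c=\bigl(1+R^{+}_{\mathcal{K}_{0}}(\tau)\,\sigma_3 M^{-1}\mathcal{V}_{\omega_1}M\bigr)^{-1}R^{+}_{\mathcal{K}_{0}}(\tau)\widetilde{P}_c
\end{equation*}
and inverts the factor in parentheses as an operator on weighted $L^2$. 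Invertibility at a given $\tau$ fails precisely when $\tau$ is an embedded eigenvalue or resonance of $\mathcal{K}$; hypothesis (H9) rules out interior embedded eigenvalues on $\sigma_e(\mathcal{L}_{\omega_1})$, and (H10) rules out both eigenvalues and resonances at the edges $\pm\omega_1$. Away from the essential spectrum $\mathcal{K}$ has only discrete spectrum, and on $X_c$ the resolvent is analytic, so the bound is uniform there. A compactness plus continuity argument on $\tau$ then upgrades the pointwise bound to the uniform one in \eqref{eq:LAPgoal}.

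The hard part is, as usual, the uniform resolvent estimate \eqref{eq:LAPgoal} at the threshold points $\tau=\pm\omega_1$, where in two dimensions the free resolvent has a logarithmic singularity; this is exactly the obstruction that (H10) is designed to remove, and it is where the two-dimensional analysis departs substantially from its three-dimensional counterpart. Once \eqref{eq:LAPgoal} is in hand, the rest of the argument (Plancherel, extension by zero, restriction to $t\in[0,\infty)$) is routine and produces the constant $C=C(s,\omega_1)$ claimed in the statement.
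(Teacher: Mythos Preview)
The paper does not give its own proof of this lemma; it simply quotes it from \cite{CT} (Cuccagna--Tarulli). Your approach---extending by zero, passing to the time Fourier transform, and reducing the retarded smoothing estimate to a uniform limiting absorption bound $\sup_{\tau}\|R^{+}_{\mathcal{K}}(\tau)\widetilde{P}_c\|_{L^{2,s}\to L^{2,-s}}<\infty$---is the standard one (this is essentially Kato's theory of $\mathcal{K}$--smooth operators) and is correct. Your identification of the 2D threshold resolvent bound at $\tau=\pm\omega_1$ as the genuine content, to be handled via (H10), is accurate; that is exactly the point of the analysis carried out in \cite{CT}. One minor remark: there is a harmless sign slip in your forcing term ($\im\partial_t u-\mathcal{K}u=\im\widetilde{P}_c g$, not $-\im\widetilde{P}_c g$), which does not affect the argument.
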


\begin{lemma}
  \label{lem:lem3.4} Let $(p,q)$ be an admissible pair and let
$s>1$. $\exists$  a constant  $C>0$  such that $\forall$
$g(t,x)\in {S}(\R^2)$ and $t\in\R$:
$$
\left\|\int_0^t e^{-\im (t-s)\mathcal{K}}\widetilde{P}_cg(s,\cdot)ds
\right\|_{L_t^pL_x^q} \le C\|g\|_{  L^2_tL_x^{2, s}}.$$
\end{lemma}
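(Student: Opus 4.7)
The natural plan is to obtain Lemma \ref{lem:lem3.4} as a composition of the Strichartz estimate (Lemma \ref{lem:lem3.1}) and the dual smoothing estimate (Lemma \ref{lem:lem3.2}), together with the Christ--Kiselev lemma to handle the truncation of the time integral at $t$ instead of $+\infty$.

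First I would define two auxiliary operators. Let $T_1 : L^2_x \to L^p_t L^q_x$ be $T_1 f(t) = e^{-\im t \mathcal{K}} \widetilde{P}_c f$, which is bounded by Lemma \ref{lem:lem3.1}(a) (with $k=0$). Let $T_2 : L^2_x \to L^2_t L^{2,-s}_x$ be $T_2 f(t) = e^{-\im t \mathcal{K}} \widetilde{P}_c f$, which is bounded by Lemma \ref{lem:lem3.2}(a). Its adjoint $T_2^* : L^2_t L^{2,s}_x \to L^2_x$ is $T_2^* g = \int_\R e^{\im s \mathcal{K}} \widetilde{P}_c g(s,\cdot)\,ds$, and boundedness of $T_2^*$ is exactly the content of Lemma \ref{lem:lem3.2}(b) (using that $\widetilde{P}_c$ commutes with $e^{\im s \mathcal K}$ and is essentially self-adjoint under the natural pairing, modulo the obvious adjustments).

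Composing, the full-line integral operator
\begin{equation*}
  \mathcal{T} g(t) := \int_{\R} e^{-\im(t-s)\mathcal{K}} \widetilde{P}_c g(s,\cdot)\,ds = T_1(T_2^* g)
\end{equation*}
is bounded from $L^2_t L^{2,s}_x$ to $L^p_t L^q_x$, with a constant depending only on $s$ and on the Strichartz/smoothing constants. This proves the desired estimate, but for the untruncated integral over all of $\R$ rather than over $[0,t]$.

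To pass to the truncated version, I would invoke the Christ--Kiselev lemma, which upgrades the boundedness of the full-line operator $\mathcal{T} : L^2_t L^{2,s}_x \to L^p_t L^q_x$ to boundedness of the Volterra operator $g \mapsto \int_0^t e^{-\im(t-s)\mathcal{K}} \widetilde{P}_c g(s)\,ds$, provided the exponent on the input side ($2$ here) is strictly smaller than the exponent on the output side ($p$ here). For an admissible pair in the sense of \eqref{admissiblepair}, i.e. $1/q + 1/p = 1/2$ with $q > 2$, one automatically has $p > 2$, so the hypothesis of Christ--Kiselev is satisfied. The main (and only serious) obstacle is this strict inequality $p>2$, which is exactly why the non-endpoint admissibility \eqref{admissiblepair} is assumed; the endpoint $(p,q)=(2,\infty)$ is anyway excluded in $2$D. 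The routine verification that $T_2^*$ is indeed adjoint to $T_2$ in the relevant weighted pairing, and that the use of $\widetilde{P}_c$ is consistent on both sides, is bookkeeping that I would omit at the sketch level.
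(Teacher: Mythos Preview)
Your argument is correct and is the standard route to such mixed Strichartz--smoothing inhomogeneous estimates: compose the homogeneous Strichartz bound (Lemma~\ref{lem:lem3.1}(a)) with the dual of the smoothing bound (Lemma~\ref{lem:lem3.2}(b)) to control the full-line operator, then invoke Christ--Kiselev using $p>2$, which follows from the admissibility condition~\eqref{admissiblepair}. Note that the paper itself does not give a proof of this lemma; it quotes Lemmas~\ref{lem:lem3.1}--\ref{lem:lem3.3} from \cite{CT} and states Lemma~\ref{lem:lem3.4} in the same block of results imported from that reference, so there is no ``paper's own proof'' to compare against---but your proof is precisely the argument one expects (and the one used in \cite{CT}).
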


The following is Proposition 1.2 in \cite{CT}.
\begin{lemma}
  \label{lem:prop1.2}
The following limits are well defined isomorphism, inverse of each
other:
 \begin{equation*}
\begin{aligned}
  &   W u= \lim _{t\to +\infty } e^{ \im  t\mathcal{K} } e^{  \im t \sigma _3(\Delta -\omega _1)
 }u  \text{ for any $u\in L^2$}  \\&
Z u= \lim _{t\to +\infty }  e^{ \im t(-\Delta +\omega _1) } e^{ -\im t \mathcal{K}
 }  \text{ for any $u=\widetilde{P}_cu$.} \end{aligned}
\end{equation*}
 For any $p\in (1,\infty )$  and any $k$ the restrictions of
$W$ and $Z$ to $L^2\cap W^{k,p}$ extend into operators such that for
for a constant $C $  we have $$\| W\| _{
W^{k,p}(\R ^2),   W^{k,p}_c}+\| Z\| _{
  W^{k,p}_c, W^{k,p}(\R ^2) }<C  $$ with
$W^{k,p}  _c  $ the closure in $W^{k,p} (\R ^2)$ of $
W^{k,p}(\R ^2)\cap \widetilde{P}_c L^2  _c  $.
\end{lemma}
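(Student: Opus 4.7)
The plan is to establish existence of the wave operator $W$ by Cook's method, using the smoothing estimates of Lemma \ref{lem:lem3.2} together with their analogues for the free group $e^{\im t\sigma_3(\Delta-\omega_1)}$ (which, since $\sigma_3(\Delta-\omega_1)$ is block-diagonal with entries $\pm(\Delta-\omega_1)$, reduces to two free Schr\"odinger groups). Differentiating inside the defining limit,
\begin{equation*}
e^{\im t\mathcal{K}}e^{\im t\sigma_3(\Delta-\omega_1)}u-u=\im\int_0^t e^{\im s\mathcal{K}}\,\sigma_3 M^{-1}\mathcal{V}_{\omega_1}M\,e^{\im s\sigma_3(\Delta-\omega_1)}u\,ds,
\end{equation*}
so it suffices to show the integrand is in $L^1_s L^2_x$ for $u$ in a dense set, e.g.\ $u\in\Sigma_2$. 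Since $\mathcal{V}_{\omega_1}$ decays like a Schwartz function, pick $s>1$ and write $\mathcal{V}_{\omega_1}=\langle x\rangle^{-s}\cdot\langle x\rangle^{s}\mathcal{V}_{\omega_1}$; combine the smoothing estimate Lemma \ref{lem:lem3.2}(b) for $e^{\im t\mathcal{K}}\widetilde{P}_c$ with the free smoothing bound for $e^{\im s\sigma_3(\Delta-\omega_1)}$ to conclude that the Cauchy criterion holds in $L^2$. This gives the existence of $Wu$ on a dense set, hence on all of $L^2$ by density once uniform $L^2$-boundedness is established from the same two smoothing estimates applied symmetrically.

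Next I would build $Z$ the same way with the roles swapped, restricting to $u=\widetilde{P}_c u$ so that $e^{-\im t\mathcal{K}}u$ disperses (this is exactly the content of Lemmas \ref{lem:lem3.1}--\ref{lem:lem3.2}). The intertwining relation $\mathcal{K}W=W\sigma_3(-\Delta+\omega_1)$ follows by differentiating the limit in $t$, and it immediately gives $\mathrm{Range}(W)\subset\widetilde{P}_c L^2$ because $N_g(\mathcal{L}_{\omega_1}^*)$-projections of the free evolution vanish in the limit thanks to (H9)--(H10) (no embedded eigenvalues, no resonances at $\pm\im\omega_1$, so spectral projections onto eigenvalues decouple from the continuous dynamics). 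A short computation using the chain rule then shows $ZW=1$ on $L^2$ and $WZ=1$ on $\widetilde{P}_c L^2$, i.e.\ $W$ and $Z$ are mutually inverse isomorphisms.

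The main obstacle is the $W^{k,p}$ boundedness for $p\neq 2$: the Cook-method argument above is intrinsically $L^2$-based. For this I would follow Yajima's strategy adapted to the matrix setting of \cite{CT}: express $W$ as a stationary-phase type operator via the distorted Fourier transform associated to $\mathcal{K}$, whose existence and $L^p$-mapping properties are guaranteed by (H9)--(H10) and the exponential decay of $\phi_\omega$ (no thresholds or embedded singularities to obstruct the construction). The intertwining then formally conjugates $\mathcal{K}$ to the free matrix operator $\sigma_3(-\Delta+\omega_1)$, and the Mikhlin-type multiplier estimates for the kernel, together with the absence of singularities at the edges $\pm\im\omega_1$, yield $L^p$-boundedness for $1<p<\infty$ and hence $W^{k,p}$-boundedness by composing with powers of $\langle\nabla\rangle$ (which commute with the free operator and can be transferred through $W$ via the intertwining). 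Since this is exactly Proposition 1.2 of \cite{CT}, in practice I would simply cite the detailed proof there; the only new input needed from the present setting is the verification, already made in Section \ref{sect:statement}, that the vortex linearization $\mathcal{L}_{\omega_1}$ fits the framework of \cite{CT} under (H6)--(H11).
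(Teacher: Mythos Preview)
Your proposal is fine: the paper does not prove this lemma at all but simply states it as Proposition 1.2 of \cite{CT} and cites the detailed proof there, which is exactly what you conclude you would do in practice. Your sketch of Cook's method for the $L^2$ existence and the Yajima-type argument for $W^{k,p}$ boundedness is the standard route and matches the content of \cite{CT}; the only thing to add is that the paper treats the verification that (H6)--(H11) place $\mathcal{L}_{\omega_1}$ in the framework of \cite{CT} as implicit rather than spelling it out.
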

The following is Lemma 3.5 \cite{CT}.

\begin{lemma}
  \label{lem:lem3.5} Consider the diagonal matrices $
E_+=\text{diag}(1 , 0)$ $E_-=\text{diag}(0 , 1).$ Set $P_\pm  =Z  E_\pm W $ with $Z $ and $W $
the wave operators associated to $ \mathcal{K} $. Then  we have for
$u =\widetilde{P}_cu$
\begin{equation*}
\begin{aligned}
  &    P_+ u =\lim _{\epsilon \to 0^+}
 \frac 1{2\pi \im }
\lim _{M \to +\infty} \int _\omega ^M \left [ R_{ \mathcal{K} }(\lambda
+\im  \epsilon )- R_{\mathcal{K} }(\lambda -\im \epsilon )
 \right ] ud\lambda \\&
P_- u =\lim _{\epsilon \to 0^+}
 \frac 1{2\pi \im }
\lim _{M \to +\infty} \int _{-M }^{-\omega } \left [ R_{\mathcal{K}
}(\lambda +\im \epsilon )- R_{\mathcal{K} }(\lambda -\im \epsilon ) \right ]
ud\lambda \end{aligned}
\end{equation*}
and  for any $s_1$ and $s_2$ and for $C=C (s_1,s_2   )$   we have $$  \| (P_+ -P_- -\widetilde{P}_c \sigma _3) f\|  _{L^{2,s_1} }\le
C  \|       f\|  _{L^{2,s_2} }.  $$
\end{lemma}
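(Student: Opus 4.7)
The plan is to reduce both assertions to the free block-diagonal operator $\mathcal{K}_0:=\sigma_3(-\Delta+\omega_1)=\mathrm{diag}(-\Delta+\omega_1,\Delta-\omega_1)$. From the very definition of $W$ in {Lemma} \ref{lem:prop1.2} one has the intertwining $\mathcal{K}W=W\mathcal{K}_0$. Multiplying $(\mathcal{K}-z)W=W(\mathcal{K}_0-z)$ by $R_\mathcal{K}(z)$ on the left and by $R_{\mathcal{K}_0}(z)Z$ on the right, and using $WZ=\widetilde{P}_c$ (with $Z$ extended by $0$ off $\widetilde{P}_c L^2$) produces the key identity
\begin{equation*}
R_\mathcal{K}(z)\widetilde{P}_c \;=\; W\,R_{\mathcal{K}_0}(z)\,Z,\qquad z\in\C\setminus\sigma(\mathcal{K}).
\end{equation*}

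First I would establish the two resolvent formulas for $P_\pm$. Since $\mathcal{K}_0$ is self-adjoint and diagonal,
\begin{equation*}
R_{\mathcal{K}_0}(\lambda\pm i\varepsilon)=\mathrm{diag}\bigl(R_{-\Delta+\omega_1}(\lambda\pm i\varepsilon),\,-R_{-\Delta+\omega_1}(-\lambda\mp i\varepsilon)\bigr),
\end{equation*}
and Stone's formula for the scalar Laplacian $-\Delta+\omega_1$ (with purely absolutely continuous spectrum $[\omega_1,\infty)$) shows that on $[\omega_1,M]$ the first diagonal entry integrates to the identity of $L^2(\R^2)$ as $\varepsilon\to 0^+,\ M\to\infty$, while the second entry vanishes because $-\lambda\in[-M,-\omega_1]$ lies in the resolvent set of $-\Delta+\omega_1$ and the $\pm i\varepsilon$ boundary values coincide there. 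Hence $\frac{1}{2\pi i}\int_{\omega_1}^{M}[R_{\mathcal{K}_0}(\lambda+i\varepsilon)-R_{\mathcal{K}_0}(\lambda-i\varepsilon)]\,d\lambda\to E_+$ strongly, and conjugating by $W$ and $Z$ via the identity above produces the first formula of the lemma; the interval $[-M,-\omega_1]$ yields $P_-$ by the symmetric argument.

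For the weighted bound, using $E_+-E_-=\sigma_3$, $WZ=\widetilde{P}_c$, and $ZW=I$ gives
\begin{equation*}
P_+-P_--\widetilde{P}_c\sigma_3 \;=\; Z\sigma_3 W-WZ\sigma_3 \;=\; Z[\sigma_3,W] + \widetilde{P}_d\sigma_3,
\end{equation*}
where $\widetilde{P}_d:=I-\widetilde{P}_c$ is the finite-rank projection onto the discrete spectral subspace of $\mathcal{K}$. The term $\widetilde{P}_d\sigma_3$ is automatically bounded from any $L^{2,s_2}$ to any $L^{2,s_1}$ because its range is a finite-dimensional space of Schwartz functions (the generalized eigenfunctions inherit Schwartz decay from (H3)--(H4) and the standard elliptic argument). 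For $Z[\sigma_3,W]$ I would use the Duhamel representation
\begin{equation*}
W-I \;=\; i\int_0^{+\infty}e^{is\mathcal{K}}\,V_0\,e^{-is\mathcal{K}_0}\,ds,\qquad V_0:=\mathcal{K}-\mathcal{K}_0=\sigma_3M^{-1}\mathcal{V}_{\omega_1}M,
\end{equation*}
together with $[\sigma_3,e^{-is\mathcal{K}_0}]=0$ (since $\sigma_3$ commutes with the scalar operator $\mathcal{K}_0$) to reduce $[\sigma_3,W]$ to an integral in which every term contains a factor of $[\sigma_3,V_0]$ (after one more Duhamel expansion of $e^{is\mathcal{K}}$). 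Since $[\sigma_3,V_0]$ retains only the off-diagonal entries of $V_0$, it still has the exponential decay of $\mathcal{V}_{\omega_1}$. Combining this decay with the weighted limiting absorption bounds $\|R_\mathcal{K}^{\pm}(\lambda)\|_{L^{2,s}\to L^{2,-s}}$ uniform in $\lambda\in\R$ (available under (H9)--(H10)) and the weighted $L^2$ boundedness of $Z$ (from the proof of {Lemma} \ref{lem:prop1.2} in \cite{CT}) yields arbitrary polynomial gain.

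The hardest point is the uniform weighted control of $R_\mathcal{K}^{\pm}(\lambda)$ across the whole essential spectrum $(-\infty,-\omega_1]\cup[\omega_1,\infty)$: in two dimensions the scalar limiting absorption principle for $-\Delta+\omega_1$ degenerates logarithmically near the threshold $\lambda=\omega_1$, and this degeneracy is controlled only because (H10) excludes both an eigenvalue and a resonance at $\pm i\omega_1$ for $\mathcal{L}_{\omega_1}$, while (H9) excludes embedded eigenvalues in the interior of $\sigma_e(\mathcal{K})$. Once these weighted bounds are in hand, the exponential decay of $[\sigma_3,V_0]$ absorbs any negative weight $-s_2$ and manufactures arbitrarily large positive weight $s_1$, completing the proof.
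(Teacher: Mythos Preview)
The paper does not prove this lemma; it simply quotes it from \cite{CT}. Your sketch is along the standard lines (intertwining plus Stone's formula for the first part, commutator with the exponentially decaying potential for the second). However, there is an internal inconsistency you should repair.

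From your own identity $R_{\mathcal{K}}(z)\widetilde{P}_c = W\,R_{\mathcal{K}_0}(z)\,Z$, the resolvent integral on $[\omega_1,M]$ converges (by Stone's formula for $\mathcal{K}_0$) to $W E_+ Z$, not to $Z E_+ W$. So your derivation of the first formula actually shows that the resolvent expression equals $W E_+ Z$, which matches the lemma only if $P_\pm = W E_\pm Z$; the order written in the statement ($P_\pm = Z E_\pm W$) appears to be a misprint carried over from \cite{CT}. You then silently switch back to $P_\pm = Z E_\pm W$ in the second half, writing $P_+-P_--\widetilde{P}_c\sigma_3 = Z\sigma_3 W - WZ\sigma_3$. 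Be consistent: with $P_\pm = W E_\pm Z$ one gets the cleaner identity
\[
P_+-P_--\widetilde{P}_c\sigma_3 \;=\; W\sigma_3 Z - WZ\sigma_3 \;=\; W[\sigma_3,Z],
\]
and your commutator strategy applies to $[\sigma_3,Z]$ just as well.

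The smoothing argument itself is correct in spirit but underdeveloped. Your Duhamel expansion does reduce everything to terms containing the exponentially localized factor $[\sigma_3,V_0]$, but you then jump between the time-domain representation and the limiting absorption bound without saying how. The clean way is to pass to the stationary representation: use the intertwining to write $W[\sigma_3,Z]$ (or $Z[\sigma_3,W]$) as an integral over $\lambda$ of $R_{\mathcal{K}}^{\pm}(\lambda)[\sigma_3,V_0]R_{\mathcal{K}_0}^{\pm}(\lambda)$-type kernels against $dE_{\mathcal{K}_0}(\lambda)$; the exponentially decaying factor $[\sigma_3,V_0]$ then sandwiches the two resolvents and converts any incoming weight $s_2$ into any outgoing weight $s_1$. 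Near the thresholds $\pm\omega_1$ you are right that the 2D resolvent degenerates logarithmically, and (H10) is precisely what makes $R_{\mathcal{K}}^{\pm}$ bounded there; this is the substance of the proof in \cite{CT}, and you have identified it correctly.
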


Now we look at the term $\mathbf{E}$  in \eqref{eq:eqh}.
\begin{lemma}\label{lem:rem1}For any preassigned $s$ and for $\epsilon _0 >0$  small enough we have
   \begin{equation} \label{eq:nablaH3} \begin{aligned} &
    \mathbf{E} =    R_1+R_2  \text{ with  }
  \| R_1 \| _{L^1_t([0,T],H^{ 1 }_x)} +\|
   R_2 \| _{L^{2
 }_t([0,T],H^{ 1
,
 s}_x)}\le C( s,C_0
 ) \epsilon^2 .   \end{aligned}  \end{equation}
Furthermore  for a fixed constant $c $ we have \begin{equation} \label{eq:nablaH2} \begin{aligned} &
     \| A \| _{L^\infty( (0,T), \R )}\le c C_0^2 \epsilon ^2
		  .\end{aligned}  \end{equation}

\end{lemma}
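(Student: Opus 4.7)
The proof would proceed by unfolding $\mathbf{E}=M^{-1}\mathfrak{R}$ and $A=A'+A''$ term by term using the explicit expansion of $\resto$ given in Proposition \ref{prop:Bir1} and the definitions \eqref{eq:nablaH1}, \eqref{eq:nablaH12}, then distributing each piece between $R_1$ (integrable in time with $H^{1}$ in space) and $R_2$ (square integrable in time with weighted $H^{1,s}$ in space) according to what structural estimate is available.

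\textbf{The bound on $A$.} I would treat \eqref{eq:nablaH2} first, as it is purely pointwise. Every summand in $A'$ or $A''$ has one of the forms $\partial_{Q(f)}\resto^{i,j}_{k,m}(Q,Q(f))\, z^{\mu}\overline{z}^{\nu}$, or $\partial_{Q(f)}\resto$ evaluated on the remainder. Using the orbital bound \eqref{eq:orbstab1}, $|\omega-\omega_1|\lesssim \epsilon$, and hence $|Q-q(\omega_1)|=|q(\omega_0)-q(\omega_1)|\lesssim\epsilon$; combined with $Q(f)=2^{-1}\|f\|_{L^2}^2\lesssim C_0^2\epsilon^2$ from \eqref{Strichartzradiation} and $|z|\lesssim C_0\epsilon$ from \eqref{L^inftydiscrete}, each monomial is pointwise $\lesssim C_0^2\epsilon^2$ because it always carries at least a factor $|z|^2$ (the sum in \eqref{eq:nablaH1} starts at $|\mu+\nu|\ge 2$) or a factor $Q(f)$. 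The only potentially dangerous contribution is $\partial_{Q(f)}\resto^{2,0}_{k,m}(Q,Q(f))$, but by Remark \ref{rem:clarif} this object originates from $d(\omega)-d(\omega_0)-(\omega-\omega_0)q(\omega_0)$ and, via $\omega=\omega(Q,r)$ with $\nabla_r\omega|_{r=0}=0$, it is already quadratic in $(z,f)$; hence it also fits under $C_0^2\epsilon^2$.

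\textbf{The decomposition $\mathbf{E}=R_1+R_2$.} I would split $\mathfrak{R}$ into the gradient $\nabla_f\resto$ and the difference $G_{\mu\nu}(Q,Q(f))-G_{\mu\nu}(Q,0)$, then process each $\mathbf{R}_j$ ($j=2,3,4$), $\widehat{\mathbf{R}}_2$, $\resto^{1,2}_{k,m}$ and the pure-power piece $E_P(f)$ separately. The $R_2$ class collects terms of the form $z^{\mu}\overline{z}^{\nu}\Psi(\cdot)$ with $\Psi\in\Sigma_n$ for every $n$ (produced by the symbol classes $\mathbf{S}^{i,j}_{k,m}$ and by the Schwartz coefficients $G_{\mu\nu}$, $B_d$ for $d\le 4$) satisfying $\lambda\cdot\mu>\omega_1$; for these, $\|z^{\mu}\overline z^\nu\Psi\|_{L^2_tH^{1,s}_x}\le\|\Psi\|_{H^{1,s}_x}\|z^{\mu}\|_{L^2_t}\lesssim C(s,C_0)\epsilon^{2}$ by \eqref{L^2discrete}. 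The difference $G_{\mu\nu}(Q,Q(f))-G_{\mu\nu}(Q,0)=Q(f)\int_0^1\partial_\varrho G_{\mu\nu}(Q,\tau Q(f))\,d\tau$ gains an extra $\|f\|_{L^2}^2\lesssim C_0^2\epsilon^2$, improving the bound further. The $R_1$ class collects the genuinely nonlinear pieces of $\nabla_f E_P(f)$ and the degree-$L$ integral from $\mathbf{R}_4$, plus any residual monomials $z^{\mu}\overline{z}^{\nu}$ for which $\lambda\cdot\mu\le\omega_1$; for the pure-power nonlinearity one uses the Strichartz estimates in \eqref{Strichartzradiation} with an admissible pair adapted to $E_P$ to obtain $\|\nabla E_P(f)\|_{L^1_tH^1_x}\lesssim C_0^p\epsilon^p=o(\epsilon^2)$.

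\textbf{Main obstacle.} The delicate bookkeeping is the separation of $z$-monomials into those that admit the $L^2_t$ bound \eqref{L^2discrete} and those that do not. The normal-form procedure of Proposition \ref{prop:Bir1} has been designed precisely so that all non-resonant low-order monomials have been removed from $\resto$, while the resonant ones surviving in $Z_0,Z_1$ are not present in $\resto$; consequently every monomial appearing in $\mathbf{E}$ is of sufficiently high total degree that either $\lambda\cdot\mu>\omega_1$ is forced (and \eqref{L^2discrete} applies), or enough extra factors of $f$ or $z$ are available to place the term in $R_1$ via $L^\infty$ bounds on $z$ and Strichartz on $f$. Matching each summand to its proper estimate follows the scheme of Lemma 6.6 in \cite{CM1}; once this matching is laid out, the bounds \eqref{eq:nablaH3} and \eqref{eq:nablaH2} are straightforward consequences.
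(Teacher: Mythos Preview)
Your proposal is correct and follows essentially the same route as the paper: bound $A$ pointwise from its definition, then sort the pieces of $\mathbf{E}=M^{-1}\mathfrak R$ between an $L^1_tH^1_x$ part (handled by Strichartz, e.g.\ the cubic and degree-$L$ contributions from $E_P(f)$ and $B_L$) and an $L^2_tH^{1,s}_x$ part (handled by $L^2_t$ bounds on oscillatory factors together with the Schwartz nature of the coefficients). The paper treats exactly the same representative terms (the $G_{\mu\nu}(Q,Q(f))-G_{\mu\nu}(Q,0)$ difference, the split $\beta(|f|^2)f=\chi_{|f|\le1}+\chi_{|f|>1}$, the symbol $\nabla_f\resto^{1,2}=\mathbf S^{1,1}$, and the $B_L$ integral) with the same tools.

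One small point worth tightening: your displayed bound $\|z^{\mu}\overline z^{\nu}\Psi\|_{L^2_tH^{1,s}_x}\le\|\Psi\|_{H^{1,s}_x}\|z^{\mu}\|_{L^2_t}\lesssim\epsilon^2$ only yields $C_0\epsilon$ as written; the second power of $\epsilon$ comes either from the remaining factor $\overline z^{\nu}$ (or higher degree in $\mu$) or from the extra $Q(f)$ in the $G_{\mu\nu}$--difference, which you note separately. Also, for contributions such as $\nabla_f\resto^{1,2}_{k,m}$, $\nabla_f\widehat{\mathbf R}_2$ and $\nabla_f\mathbf R_2$ the paper places them in $R_2$ using the \emph{smoothing} part $\|f\|_{L^2_tH^{1,-s}_x}$ of \eqref{Strichartzradiation} rather than $\|z^{\mu}\|_{L^2_t}$; your description of $R_2$ as ``$z^\mu\overline z^\nu\Psi$ with $\lambda\cdot\mu>\omega_1$'' should be enlarged to include this second mechanism.
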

\proof The estimate on $A=A'+{A} ^{\prime\prime}  $ follows from the definitions of $A'$ in \eqref{eq:nablaH1} and of $A^{\prime\prime}  $  in \eqref{eq:nablaH12}.

\noindent $\mathbf{E}$  is a sum of various terms. For example we have
\begin{equation*} \begin{aligned}
  &
\|   z^\mu \overline{z}^\nu   M^{-1} [{G}_{\mu \nu}  (Q,0)  - {G}_{\mu \nu}  (Q,Q(f)) ] \| _{L^{2
 }_t([0,T],H^{ 1
,
 s}_x)} \\& \le \|   z^\mu \overline{z}^\nu   \| _{L^{2
 }_t [0,T] }   \|   {G}_{\mu \nu}  (Q,0)  - {G}_{\mu \nu}  (Q,Q(f))  \| _{L^{\infty
 }_t([0,T],H^{ 1
,
 s}_x)} \lesssim C_0^3 \epsilon ^3. \end{aligned}
\end{equation*}
So this term can be absorbed in $R_2$.   Another example is   $\beta (|f|^2)f =\chi _{|f|\le 1}\beta (|f|^2)f +\chi _{|f|\ge 1}\beta (|f|^2)f $.
The 1st term can be bounded, schematically,  by
\begin{equation}\label{pwcub} \begin{aligned}
  &  \|     |f  |^2  f  \| _{  L^{1
 }_t([0,T],H^{ 1
 }_x) } \lesssim \left
\|  \| f \| _{W^{1,6}_x}    \| f \| ^2_{L^{ 6}_x} \right \| _{L^1_t[0,T]
} \le \| f \| _{L^3_t([0,T],W^{1,6}_x)}^3
\  \lesssim C_0^3 \epsilon ^3  \end{aligned}
\end{equation}
 while the 2nd term can be bounded by
 \begin{equation}\label{pwL} \begin{aligned}
  &   \|     f  ^{{L}}   \| _{  L_t^1H^1_x } \lesssim
\left \|  \| f \| _{W^{1,2{L}}_x}    \| f \| ^{{L}-1}_{L^{
2{L}}_x} \right \| _{L^1_t } \le \| f \| _{L^{\frac{
2{L}}{{L}-1}}_tW^{1,2{L}}_x}
 \| f \| ^{{L}-1}_{L^{  2{L}  \frac{{L}-1}{{L}+1}  }_tW^{1,2L}_x} \lesssim C_0^L\epsilon ^{L},  \end{aligned}
\end{equation}
  where in the last step we use $ \| f \| _{L^{  2{L}
\frac{{L}-1}{{L}+1} }_tW^{1,2{L}}_x}\lesssim \| f \| ^{\alpha
}_{L^{\frac{ 2{L}}{{L}-1}}_tL^{ 2{L}}_x} \|
   f      \| _{  L_{t }^\infty H^1_x } ^{1-\alpha }$ for some
   $0<\alpha <1$ by ${L}>3$ (which we can always assume), interpolation and Sobolev embedding.

\noindent Notice that  by $\nabla _{f} \resto ^{1,2}_{k,m}(Q ,\varrho, f) _{|\varrho =Q (f)}=
S^{1,1}_{k,m-1}(Q ,Q (f), f)$ we have by \eqref{eq:opSymb}
\begin{equation*} \begin{aligned}
  &   \|    \nabla _{f} \resto ^{1,2}_{k,m}(Q ,\varrho, f) _{|\varrho =Q (f)}  \| _{  L_t^2H ^{1,s}_x }  \le    \|     \| f \| _{L^{2,-\sigma}_x}   \| _{  L_t^2  }   (\|  f\|
  _{L^2}+|z | +|Q (f)  |) \| _{  L_t^2   }\\& \le   \| f \| _{L_t^2L^{2,-\sigma}_x}  (\| z \| _{  L_t^\infty}     + \| f \| _{  L_t^\infty}L^2) \le 2 C_0 ^2 \epsilon ^2. \end{aligned}
\end{equation*}
Consider for example the contribution of

\begin{equation}\label{el1} \begin{aligned}
  & \nabla _{f}
\int _{\mathbb{R}^2}
B_L  (x, f(x),  Q , z,\varrho  , f  )  f^{   L}(x) dx _{|\varrho  =Q(f)} \sim B_L  (x, f(x),  Q , z,Q(f)  , f  )  f^{   L-1}(x)\\& + \int _{\mathbb{R}^2}
\partial _{6} B_L  (x, f(x),  Q , z,Q(f)  , f  )   f^{   L}(x) dx +  \partial _{2} B_L  (x, f(x),  Q , z,Q(f)  , f  )   f^{   L}(x) .\end{aligned}
\end{equation}
 The last term can be treated like $ f^{   L}$   above, since $\| B_L  (x, f(x),  Q , z,Q(f)  , f  )\| _{  L _{tx} ^\infty} \le C$   by \eqref{eq:B5}.
 We can use \eqref{pwcub}  or  \eqref{pwL}
 for the 1st term of the r.h.s., since
 $L-1\ge 3$.  Finally let us consider the 2nd term in the r.h.side.  If we take $g\in L ^{\infty}_tH ^{-1}_x$ we need to bound
 \begin{equation*} \begin{aligned}
  & \int _{0}^{T} dt \int _{\mathbb{R}^2} | \langle g,
\partial _{6} B_L  (x , f(x  ),  Q , z,Q(f)  , f  )\rangle _{L_{x'}^2}   f^{   L}(x) dx  | \\& \le   \int _{0}^{T} dt  \| | \langle g,
\partial _{6} B_L  (x , f(x  ),  Q , z,Q(f)  , f  )\rangle _{L_{x'}^2}\| _{L^2_x} \|  f^{   L}\| _{L^2_x}\\& \le \| | \langle g,
\partial _{6} B_L  (x , f(x  ),  Q , z,Q(f)  , f  )\rangle _{L_{x'}^2}\| _{L^\infty _tL^2_x}\|  f^{   L}\| _{L^1 _tL^2_x}
  \end{aligned}
\end{equation*}
and we bound the last factor by    \eqref{pwL}. We have  for fixed $t$
 \begin{equation*} \begin{aligned}
  &   \| | \langle g,
\partial _{6} B_L  (x , f(x  ),  Q , z,Q(f)  , f  )\rangle _{L_{x'}^2}\| _{ L^2_x}\le  \| \partial _{6} B_L \| _{B(\Sigma_{-k},\Sigma _k) } \| g \| _{\Sigma_{-k}}
  \end{aligned}
\end{equation*}
  so that by \eqref{eq:B5}, or by its analogue for the $B_L$ in Lemma
  \ref{lem:ExpH11}, we have that the last quantity is bounded by $C\| g \| _{H^ {-1}}$.  This yields a bound $\| \text{1st term 2nd line \eqref{el1}}\| _{L^1_tL^2_x}\lesssim C_0^L \epsilon ^L$.

  \bigskip \textit{{Proof of {Lemma} \ref{lem:conditional4.2} }}
  We rewrite \eqref{eq:eqh} as
  \begin{equation*}
\begin{aligned}
  \im  \dot h &= [\mathcal{K}   h  + A (P_+ -P_-) ] h+ A [\widetilde{P}_c \sigma _3-P_+ +P_-] \sigma _3h +  \sum _{\mathbf{e}      \cdot(\mu-\nu)\in \sigma _e(\mathcal{L} _{\omega _1 })}
z^\mu \overline{z}^\nu       \mathbf{G}_{\mu \nu}    +  \widetilde{P}_c\mathbf{E}  .
\end{aligned}
\end{equation*}
Then we have
\begin{equation}\begin{aligned}
    h (t) &= \mathcal{U}(t,0) e^{\im t\mathcal{K}}h(0) +\int _0^t\mathcal{U}(t,s)e^{\im (t-s)\mathcal{K}}\left [ A    [\widetilde{P}_c \sigma _3-P_+ +P_-] \sigma _3h \right . \\& \left .+ \sum _{\mathbf{e}      \cdot(\mu-\nu)\in \sigma _e(\mathcal{L} _{\omega _1 })}
z^\mu \overline{z}^\nu       \mathbf{G}_{\mu \nu}    +  \widetilde{P}_c\mathbf{E} \right ] ds,
\end{aligned}
\end{equation}
where the following operator commutes with $\mathcal{K}$:
\begin{equation*}
 \mathcal{U}(t,s)= ^{\im \int _s^t A(s') ds'(P_+ -P_-) }.
\end{equation*}
Then
 \begin{equation*}
\begin{aligned}
 \|  h \| _{L^p_t W^{ 1 ,q}_x \cap L^2_t H^{ 1 ,-s}_x } & \lesssim
 \| h (0)\| _{H^1}  +\sum _{\mu \nu}\| z^\mu \overline{z}^\nu  \| _{L^2_t }\|  \mathbf{G}_{\mu \nu}\| _{L^\infty_t H^{ 1 , s}_x}\\&
 + \| A \| _{L^\infty_t } \|  h \| _{ L^2_t H^{ 1 ,-s}_x }
 + \| R_1 \| _{L^1_t H^{ 1 }_x } +\|
   R_2 \| _{L^{2
 }_t H^{ 1
,
 s}_x }    .
\end{aligned}
\end{equation*}
The terms on the second line are $O(\epsilon ^2)$ and   the r.h.s. is
bounded by the r.h.s. of \eqref{4.5}, proving {Lemma} \ref{lem:conditional4.2}.

\section*{Acknowledgments}   S.C. was partially funded  by    grants FIRB 2012 (Dinamiche Dispersive) from the Italian Government,   FRA 2013 and FRA 2015 from the University of Trieste.
M.M. was supported by the Japan Society for the Promotion of Science (JSPS) with the Grant-in-Aid for Young Scientists (B) 15K17568.

Department of Mathematics and Geosciences,  University
of Trieste, via Valerio  12/1  Trieste, 34127  Italy. {\it E-mail Address}: {\tt scuccagna@units.it}
\\

Department of Mathematics and Informatics,
Faculty of Science,
Chiba University,
Chiba 263-8522, Japan.
{\it E-mail Address}: {\tt maeda@math.s.chiba-u.ac.jp}

\end{document}